  \let\NAT@parse\undefined
  \theoremstyle{plain}
  \newtheorem{theorem}{Theorem}
  \newtheorem{lemma}{Lemma}
  \newtheorem{proposition}[theorem]{Proposition}
  \newtheorem{corollary}{Corollary}
  \newtheorem{assumption}{Assumption}
  \newtheorem{definition}{Definition}
  \newtheorem{example}{Example}
  \newtheorem{convention}{Convention}
  \theoremstyle{remark}
  \newtheorem{remark}{Remark}[section]
\let\color@begingroup\relax
   \let\color@endgroup\relax}{}%
\def\fix@ieeecolor@hbox#1{%
  \hbox{\color@begingroup#1\color@endgroup}}
\patchcmd\@makecaption{\hbox}{\fix@ieeecolor@hbox}{}{\FAILED}
\patchcmd\@makecaption{\hbox}{\fix@ieeecolor@hbox}{}{\FAILED}
\begin{document}
\bstctlcite{IEEEexample:BSTcontrol}

\title{Probabilistic Framework of Howard's Policy Iteration: BML Evaluation and Robust Convergence Analysis}
\author{Yutian Wang, Yuan-Hua Ni, Zengqiang Chen and Ji-Feng Zhang, \IEEEmembership{Fellow, IEEE}
\thanks{This work was supported in part by National Key R\&D Program of China under Grant 2018YFA0703800, and in part by the National Natural Science foundation of China under Grants 62173191 and 61973175.}
\thanks{Yutian Wang, Yuan-Hua Ni and Zengqiang Chen are with the College of Artificial Intelligence, Nankai University, Tianjin, China (e-mail: wangyt2239@mail.nankai.edu.cn; yhni@nankai.edu.cn; chenzq@nankai.edu.cn).}
\thanks{Ji-Feng Zhang is with the Key Laboratory of Systems and Control, Academy of Mathematics and Systems Science, Chinese Academy of Sciences, Beijing 100190, China, and also with the School of Mathematical Sciences, University of Chinese Academy of Sciences, Beijing
100149, China (e-mail: jif@iss.ac.cn).}
}
\maketitle
\begin{abstract}
This paper aims to build a probabilistic framework for Howard's policy iteration algorithm using the language of forward-backward stochastic differential equations (FBSDEs). As opposed to conventional formulations based on partial differential equations, our FBSDE-based formulation can be easily implemented by optimizing criteria over sample data, and is therefore less sensitive to the state dimension. In particular, both on-policy and off-policy evaluation methods are discussed by constructing different FBSDEs. The backward-measurability-loss (BML) criterion is then proposed for solving these equations. By choosing specific weight functions in the proposed criterion, we can recover the popular Deep BSDE method or the martingale approach for BSDEs. The convergence results are established under both ideal and practical conditions, depending on whether the optimization criteria are decreased to zero. In the ideal case, we prove that the policy sequences produced by proposed FBSDE-based algorithms and the standard policy iteration have the same performance, and thus have the same convergence rate. In the practical case, the proposed algorithm is  still proved to converge robustly under mild assumptions on optimization  errors.
\end{abstract}

\begin{IEEEkeywords}
forward-backward stochastic differential equations, policy iteration, stochastic optimal control
\end{IEEEkeywords}

\section{Introduction}\label{sec:introduction}
As an abstract description of policy-based methods, such as policy iteration (PI) \cite{Howard1960,Puterman1979,santos_rust_2004,lee_sutton_2021,Sutton2018,wei_liu_2015} and policy gradient methods \cite{sutton_mcallester_1999,Silver2014,Kakade2001}, the general policy iteration (GPI) for optimal control problems works as follows.
\begin{enumerate}
\item \textit{(Initialization.)} Given an initial policy $\alpha^0$ and set $n\leftarrow0$.
\item \textit{(GPI Subroutine.)} Given a policy $\alpha^{n-1}$, find a new policy $\alpha^n$ in the policy space $\mathcal{A}$.
\item Set $n\leftarrow n+1$ and go back to step~2.
\end{enumerate}
The key element of GPI is step~2, referred to as the GPI subroutine in this paper, which takes the current policy $\alpha^{n-1}$ as inputs, along with some other arguments if needed, and returns a new policy $\alpha^{n}$. For example, in policy gradient methods, the new policy is obtained via gradient descent in the policy space. That subroutine is carefully designed such that the generated policy sequence $\{\alpha^n\}$ of GPI converges to, or approaches in some sense, an optimal policy $\alpha^*$.

Originally developed by Howard for the Markov process model \cite{Howard1960}, Howard's policy improvement procedure (an instance of GPI subroutines), along with the policy iteration method, has been widely applied to optimal control problems, from disctete to continous, deterministic to stochastic and linear to nonlinear systems \cite{Kleinman1968,lewis_vrabie_2009,ni_fang_2013,li_peng_2019a,li_peng_2019b}. A major advantage of Howard's policy iteration (hereafter referred to as the standard PI) is its fast convergence rate. For discrete time and state problems, Puterman and Brumelle\cite{Puterman1979} pointed out that the standard PI can be regarded as an instance of Newton's method, noting that both are finding zeros of a nonlinear operator. Based on this crucial observation, they successfully established a local quadratic convergence rate, which is also a standard result for Newton's iterative scheme in root finding problems. For linear quadratic regulation (LQR) problems in continuous time and state, the value function squence generated by the standard policy iteration also converges quadratically \cite{Kleinman1968}. Another interesting property of the standard PI is its robustness against numerical errors. For stochastic nonlinear systems,  Kerimkulov \textit{et al}.~\cite{Kerimkulov2020} analyzed the standard PI with perturbation errors. They employed the theory of backward stochastic differential equations (BSDEs) to estimate the performance error bound; see also \cite{Pang2022} for perturbation discussion on continous-time LQR problem.

Howard's policy improvement procedure is usually recognized as two consecutive steps: policy evaluation and policy improvement. The purpose of policy evaluation is to collect quantitative information on the current policy, or more specifically, the value function of the policy. Based on this information, the policy improvement step constructs a new policy that guarantees a monotone increase in performance. In this work, we focus on policy evaluation, and assume that a minimizing function for policy improvement exists and is accessible \cite{lee_sutton_2021,Kerimkulov2020}. Most early methods of policy evaluation obtain value functions by solving the differential Bellman equation, a first or second order linear partial differential equation (PDE) \cite{abu_lewis_2005,Kleinman1968,leake_liu_1967}. Since traditional finite difference methods for PDEs generally suffer from the curse of dimensionality \cite{E2021}, integral PI \cite{lewis_vrabie_2009} and temporal difference learning \cite{Doya2000,Fremaux2013} are preferred in practice. In addition to aforementioned works that focus on deterministic case, Jia and Zhou \cite{jia_zhou_2022} investigated policy evaluation in stochastic settings with a finite planning horizon. They extended temporal difference learning to stochastic systems, and proposed a martingale approach which can be viewed as the stochastic counterpart of integral PI. It is worth noting that their martingale approach utilized a forward-backward stochastic differential equation (FBSDE), which is precisely the stochastic representation of the value function. From this point of view, their work is closely related to early policy evaluation methods utilizing PDEs, as Feynman-Kac's formula relates FBSDEs and PDEs \cite{karatzas_shreve_1998}. On the other hand, Han~\textit{et al.}~\cite{Han2018} proposed Deep BSDE method as a numerical approach for high-dimensional PDEs, where the problem is transformed into an optimization problem subject to FBSDEs by nonlinear Feynman-Kac's formula. 

\textbf{Contributions.} The main contributions of this paper are as follows. \textbf{1)} Motivated by these two parallel applications of Feynman-Kac type formulae \cite{Han2018,jia_zhou_2022}, we rigorously build the FBSDE-based framework of policy evaluation. In particular, we propose two FBSDE-based GPI subroutines are proposed that, under certain assumptions, are shown to be equivalent to conventional PDE-based subroutine used in Howard's policy iteration. This in turn shows GPI equipped with proposed subroutines converges as fast as the standard PI. \textbf{2)} We propose a novel optimization-based formulation of policy evaluation, whereby value function gradients are evaluated rather than the value function itself. In the case of inexact policy evaluation, we present a robust convergence result in terms of the optimization errors. \textbf{3)} We propose a versatile criterion for the optimization problem in policy evaluation. As the solution to the FBSDE constraint is not known a priori, we prove that it is equivalent to optimizing the proposed backward-measurability-loss (BML) criterion. By selecting different weight functions in the BML criterion, we are able to recover the Deep BSDE method in \cite{Han2018} as well as the martingale approach in \cite{jia_zhou_2022}. Combining with the time discretization scheme in \cite{bender_zhang_2008}, our method can also be used to solving FBSDEs and Feynman-Kac type PDEs.
See also Figure~\ref{fig:0} for an overview of our policy iteration framework.

\textbf{Organizations.} This paper is organized as follows. In Section~\ref{sec:SOC}, we set up the stochastic optimal control problem and review the concept of value functions. In Section~\ref{sec:PDE-subroutine}, we state the standard policy iteration algorithm and present a global linear convergence result. Two FBSDE-based policy iteration algorithms are introduced and analyzed in Section~\ref{sec:FBSDE-subroutines}. In addition to the ideal convergence results, a robust convergence analysis is offered regarding optimization errors. Section~\ref{sec:approximation} dicusses the optimization problems in proposed algorithms. Numerical examples are present in Section~\ref{sec:simulations}. Finally, we conclude with some future directions in Section~\ref{sec:conclusions}.

\textbf{Notations.} Notations to be used frequently are summarized as follows. \textbf{1)} About probability theory and stochastic analysis. An element $\xi\in L^2_{\mathcal{F}}$ is a $\mathcal{F}$-measurable function with $ \operatorname{\mathbb{E}}\|\xi\|^2 < \infty$. $W^{t,T}\equiv\{W^{t,T}_s:t\leq s\leq T\}$ denotes a $d$-dimensional Brownian motion starting at $W_t^{t,T}=0$.  $\mathbb{S}^2(t,T)$ denotes the set of adapted process $Y$ satisfying $ \operatorname{\mathbb{E}}[\sup_{t\leq s\leq T}|Y_s|^2] < \infty$. $\mathbb{H}^2(t,T)$ denotes the set of adapted process $Z$ satisfying $\operatorname{\mathbb{E}}\int_t^T\|Z_s\|^2\,ds < \infty$. When there is no ambiguity, we drop the dependencies on $t$ and $T$ in these notations. \textbf{2)} About optimal control and reinforcement learning. We use $x\in\mathbb{R}^n$ and $a\in\mathbb{R}^m$ to denote the state and the action (control). A function $\alpha$ is termed a (feedback-control) policy if it maps time-state pairs to control values. We use $F^\alpha$ to indicate that a quantity $F$ depends on a policy $\alpha$ and $F^*$ to indicate the quantity corresponding to the optimal policy. Moreover, for a quantity $F(t,x,a)$ depending on the time-state-action triple, we write $F^a(\cdot,\cdot)\equiv F(\cdot,\cdot,a)$ and $F^\alpha(\cdot,\cdot)\equiv F(\cdot,\cdot,\alpha(\cdot,\cdot))$ if $a$ is a control value and $\alpha$ is a control policy. \textbf{3)} About vector space. For elements in Euclidean space, $\|\cdot\|$ stands for the $L^2$ norm and $\langle\cdot,\cdot\rangle$ stands for the standard inner product. \textbf{4)} About functional classes. We use $w\in C^{1,2}$ to say that $w$ is continuously differentiable with respect to the first variable and twice continuously differentiable with respect to the second varaible. In Section~\ref{sec:reformulation}, we also introduce the notation $\phi\in C^{\mathrm{UniLip}}_b$ to say that $\phi$ is uniformly Lipschitz continuous and uniformly bounded.

\begin{figure}
  \centering
  \includegraphics[width=.45\textwidth]{./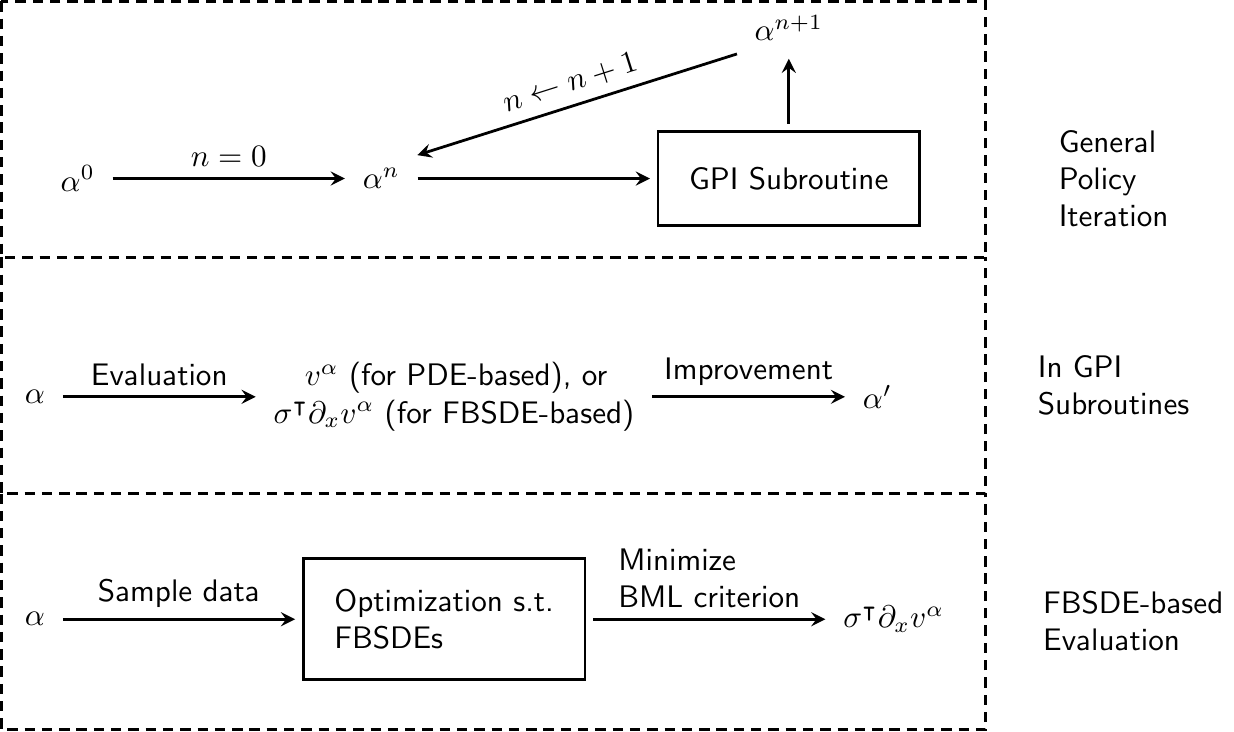}
  \caption{Hierarchical illustration of the proposed policy iteration framework. At the top level of the hierarchy is GPI, which iterates in the policy space. At the midlevel is the GPI subroutine, and at the bottom is the optimization formulation of policy evaluation.\label{fig:0}}
\end{figure}

\section{Preliminaries}\label{sec:SOC}
In this section, we review some basic concepts and results in general stochastic optimal control theory. For a comprehensive description of this subject, please refer to the monograph \cite{yong_zhou_1999}.

\subsection{Problem settings}

We consider a optimal control problem with system dynamics governed by the stochastic differential equation (SDE):
\begin{equation}
  \label{eq:general-SDE}
  X_s = x + \int_t^sb^\alpha(\tau,X_\tau)\,d\tau + \int_t^s\sigma(\tau,X_\tau)\,dW_\tau.
\end{equation}
The solution to this equation, denoted by $X^{\alpha,t,x}$ or simply $X^\alpha$, is a controlled diffusion process, depending on both the policy $\alpha$ and the starting point $(t,x)$. Let us fix the initial time-state pair $(t,x)$ at first. Eq.~\eqref{eq:general-SDE} is studied on an underlying probability space $(\Omega,\mathcal{F},\mathbb{P})$, which is required to be complete and admit a standard $d$-dimensional Brownian motion $\{W_s\}_{t\leq s\leq T}$ with $W_t=0$. Here, $T<\infty$ is the planning horizon. We equip $(\Omega,\mathcal{F},\mathbb{P})$ with the natural filtration $\{\mathcal{F}_s\}_{t\leq s\leq T}$ generated by $\{W_s\}_{t\leq s\leq T}$. Note that the definition of $\{W_s,\mathcal{F}_s;t\leq s\leq T\}$ relies on the choice of $t\in[0,T]$.\footnote{This is known as the weak formulation of stochastic optimal control problems in \cite{yong_zhou_1999}. The main motivation of this formulation is that we can deal with a family of stochastic optimal control problems by varying $(t,x)$.} We develop our theory with fixed $(t,x)$ and the generalization to varying $(t,x)$ is straightforward by substituting specific values.

The (controlled) drift coefficient $b^\alpha$ and diffusion coefficient $\sigma$ are measurable functions defined on $[0,T]\times\mathbb{R}^n$. In particular, $b^\alpha$ is defined by another measurable functions  $b:[0,T]\times\mathbb{R}^n\times\mathbb{R}^m\to\mathbb{R}^n$ and a policy $\alpha:[0,T]\times\mathbb{R}^n\to\mathbb{R}^m$, i.e.,  $b^\alpha:(t,x)\mapsto b(t,x,\alpha(t,x))$. Under certain conditions on $b^\alpha$ and $\sigma$, there exists an adapted process $X^{\alpha,t,x}$ satisfying Eq.~\eqref{eq:general-SDE} $\mathbb{P}$-a.s. for any $s\in[t,T]$; see, for example, Karatzas and Shreve \cite{karatzas_shreve_1998}. Here, by saying a process is adapted, we mean it is progressively measurable\footnote{Strictly speaking, an adapted process need not to be progressively measurable. But, if it is also measurable, then it has a stochastic equivalent process which is indeed progressively measurable \cite{Meyer1966}.}.

The cost of a policy $\alpha$ starting at $(t,x)$ is measured by the following expectation:
\begin{equation}
  \label{eq:def-valuefunction}
  v^\alpha(t,x) \coloneqq \operatorname{\mathbb{E}}\biggl[\int_t^Tf^\alpha(s,X_s^{\alpha,t,x})\,ds + g(X_T^{\alpha,t,x})\biggr].
\end{equation}
Here, $f:[0,T]\times\mathbb{R}^n\times\mathbb{R}^m\to\mathbb{R}$ and $g:\mathbb{R}^n\to\mathbb{R}$ are measurable functions, and $f^\alpha$ is defined in terms of $f$ and $\alpha$, in the same way as $b^\alpha$ is defined in terms of $b$ and $\alpha$. A control policy is said to be admissible if it takes value in $A\subset \mathbb{R}^m$ and the solution to Eq.~\eqref{eq:general-SDE} uniquely exists. We denote by $\mathcal{A}$ the collection of all admissible policies. When the policy $\alpha$ is fixed, the function $v^\alpha:[0,T]\times\mathbb{R}^n\to\mathbb{R}$ is called the value function of $\alpha$. In addition, the following infimum:
\begin{equation}
  \label{eq:def-optimal-valuefunction}
  v^*(t,x)\coloneqq \inf_{\alpha\in\mathcal{A}} v^\alpha(t,x)
\end{equation}
is called the optimal value function. 

The stochastic optimal control problem, in view of Eq.~\eqref{eq:general-SDE}--\eqref{eq:def-optimal-valuefunction}, is then stated as finding $\alpha^*\in\mathcal{A}$ such that $v^*(t,x) = v^{\alpha^*}(t,x)$ for a given pair $(t,x)$.

\subsection{Characterizing value functions via PDEs} 

Using dynamic programming, we can link value functions to a family of PDEs. Specifically, the dynamic programming principle states that
\begin{equation}
  \label{eq:DPP}
  \begin{aligned}
  v^*(t,x) = \inf_{\alpha\in\mathcal{A}}\operatorname{\mathbb{E}}\biggl[\int_t^{t+\epsilon}\kern-.5em f^\alpha(s,X_s^{\alpha,t,x})\,ds + v^*(t+\epsilon, X_{t+\epsilon}^{\alpha,t,x})\biggr].
  \end{aligned}
\end{equation}
Recall that for any sufficient smooth $v$, there is $\mathscr{L}^\alpha v(t,x)=\lim_{\epsilon\to0} \frac{1}{\epsilon} \operatorname{\mathbb{E}}\bigl[v(t+\epsilon, X_{t+\epsilon}^{\alpha,t,x}) - v(t,x)\bigr]$ with $\mathscr{L}^\alpha$ the infinitesimal generator the  associate to Eq.~\eqref{eq:general-SDE}
\begin{equation}
  \label{eq:def-L}
  \mathscr{L}^\alpha v\coloneqq \partial_tv + \langle b^\alpha,\partial_xv\rangle + \frac{1}{2} \operatorname{tr}\{\sigma\sigma^\intercal\partial_{xx}v\}.
\end{equation}
Here, we drop the dependency on $(t,x)$ for simplicity. Dividing Eq.~\eqref{eq:DPP} by $\epsilon$ and taking $\epsilon\to 0$ leads to a second order partial differential equation. Setting $t=T$ in the definition \eqref{eq:def-optimal-valuefunction} yields a boundary condition. Putting these all together and varying $(t,x)$ lead to the following second order nonlinear Cauchy problem for the optimal value function
\begin{equation}
  \label{eq:HJB}
  \left\{
  \begin{aligned}
    &0 = \inf_{a\in A}\{\mathscr{L}^av^*(t,x) + f^a(t,x)\},\quad \forall (t,x)\in[0,T)\times\mathbb{R}^n,\\
    &v^*(T,x) = g(x),\quad \forall x\in\mathbb{R}^n,
  \end{aligned}
\right.
\end{equation}
which is exactly the Hamilton-Jacobi-Bellman (HJB) equation. 

Following the similar arguments of Eq.~\eqref{eq:DPP}--\eqref{eq:HJB} leads to the following linear Cuachy problem for the value function
\begin{equation}
  \label{eq:PDE-characterization}
  \left\{
  \begin{aligned}
    &0 = \mathscr{L}^\alpha v^\alpha(t,x) + f^\alpha(t,x),\quad \forall (t,x)\in[0,T)\times\mathbb{R}^n,\\
    &v^\alpha(T,x) = g(x),\quad \forall x\in\mathbb{R}^n,
  \end{aligned}
\right.
\end{equation}
where the infimum is absent because this value function might be not optimal. We refer to this as the PDE characterization of value functions.

\subsection{Characterizing value functions via FBSDEs}
As a result of Feynman-Kac's formula, solutions to PDEs~\eqref{eq:PDE-characterization} admit FBSDEs representation, and therefore it is possible to  characterized value functions with FBSDEs. To see this, one may apply It\^o's rule to find that
\begin{align*}
  dv^\alpha(s,X_s^{\alpha}) = \mathscr{L}^\alpha v^\alpha(s,X_s^{\alpha})\,ds + \langle \sigma^\intercal\partial_xv^\alpha(s,X_s^{\alpha}), dW_s\rangle.
\end{align*}
Substituting Eq.~\eqref{eq:PDE-characterization} into this equality and combining Eq.~\eqref{eq:general-SDE} yield the FBSDE characterization of $v^\alpha$
\begin{equation}
  \label{eq:FBSDE-characterization}
  \left\{
  \begin{aligned}
    X_s &= x + \int_t^sb^\alpha(\tau,X_\tau)\,d\tau + \int_t^s\sigma(\tau,X_\tau)\,dW_\tau,\\
    Y_s &= g(X_T) + \int_s^Tf^\alpha(\tau,X_\tau)\,d\tau - \int_s^T\langle Z_\tau,dW_\tau\rangle,\\
    Y_s &= v^\alpha(s,X_s),\quad \forall s\in[t,T],\quad d\mathbb{P}\text{-a.s.}, \\
    Z_s &= \sigma^\intercal\partial_xv^\alpha(s,X_s),\quad ds\otimes d\mathbb{P}\text{-a.e. on } [t,T]\times\Omega
  \end{aligned}
\right.
\end{equation}
under some conditions ensuring the solution's existence and uniqueness. We shall point out that this FBSDE is not in the most general form. In Eq.~\eqref{eq:FBSDE-characterization}, the forward SDE does not contain the backward part $Y$ as well as the control part $Z$. This means that the FBSDE is decoupled and we can separately solve the forward SDE and the backward SDE.

The PDE characterization Eq.~\eqref{eq:PDE-characterization} and FBSDE characterization Eq.~\eqref{eq:FBSDE-characterization}, along with HJB Eq.~\eqref{eq:HJB}, are fundamental motivations of this paper. However, in deriving these equations, we implicitly assume that $v^*$ and $v^\alpha$ are sufficiently smooth. This is nontrivial, especially for HJB Eq.~\eqref{eq:HJB}, which is strongly nonlinear. Nevertheless, we focus on problems such that this assumption holds, as the nonsmooth solution to HJB equation is already a broad topic, in which the concept of viscosity solutions must be introduced \cite{Crandall1983}. Extensions to the nonsmooth case might be considered in future works.

To conclude this section, we point out that the HJB Eq.~\eqref{eq:HJB} characterizing the optimal value function is a nonlinear PDE, while its reduced form Eq.~\eqref{eq:PDE-characterization}, satisfied by the value function of a given policy, is linear. From this point of view, the standard PI manages to approximate the solution to a nonlinear PDE with a sequence of solutions to linear PDEs. This linearization coincides with the idea of Newton's method for finding zeros, regarding some abstract arguments of general derivatives. However, as discussed in the last section, solving PDEs directly generally suffers the curse of dimensionality, and thus prevents applications in large-scale problems. This is the reason why we need the probabilistic formulation Eq.~\eqref{eq:FBSDE-characterization}.

\section{The PDE-based Policy Iteration Algorithm}\label{sec:PDE-subroutine}

In this section, we reformulate the system dynamics, state our assumptions, and present a global linear convergence result of the standard policy iteration algorithm. At last, we highlight two key issues with this PDE-based algorithm.

\subsection{Problem reformulation and assumptions}\label{sec:reformulation}
In this paper, we consider a slightly different system description other than the general form Eq.~\eqref{eq:general-SDE}. Specifically, we require the drift coefficient can be decomposed in a way such that the control-dependent term is explicitly coupled with the diffusion coefficient: $\forall (t,x,a)\in[0,T]\times\mathbb{R}^n\times A$,
\begin{equation}
  b(t,x,a) = \bar{b}(t,x) + \sigma(t,x)\hat{b}(t,x,a).
\end{equation}
Namely, $b(t,x,a)$ can be split into two parts; one $\bar{b}(t,x)$ is independent of control, and the other one $\sigma(t,x)\hat{b}(t,x,a)$ is control-dependent. It seems too restrictive at the first glance. But, if $\sigma\sigma^\intercal$ is nondegenerate, i.e., $(\sigma\sigma^\intercal)^{-1}$ exists on $[0,T]\times\mathbb{R}^n$, then the desired decomposition exists. Indeed, we can choose $\bar{b}\equiv0$ and $\hat{b}\equiv\sigma^\intercal(\sigma\sigma^\intercal)^{-1}b$. Also, we require that a measurable minimizing function $\mu$ is given such that for any $(t,x,z)\in[0,T]\times\mathbb{R}^n\times\mathbb{R}^d$,
\begin{equation}
  \label{eq:def-mu}
  \mu(t,x,z)\in \operatorname*{arginf}_{a\in A}\bigl\{\langle \hat{b}^a(t,x), z \rangle + f^a(t,x) \bigr\}.
\end{equation}
This function is useful in canceling the painful infimum operator in HJB equation. To see this, we note that the diffusion coefficient $\sigma$ is independent of control, and thus, for any $(t,x)$ and smooth function $v(\cdot,\cdot)$,
\begin{align*}
  &\hphantom{\,=\,}\operatorname*{arginf}_{a\in A}\{\mathscr{L}^av(t,x)+f^a(t,x)\} \\
&=
\operatorname*{arginf}_{a\in A}\{\langle \bar{b} + \sigma\hat{b}^a, \partial_xv(t,x) \rangle + f^a(t,x)\}\\
&= \mu(t,x,\sigma^\intercal\partial_xv(t,x)).
\end{align*}
We should stress that this property holds only for $b=\bar{b}+\sigma\hat{b}$. Without the explicit appearance of $\sigma\hat{b}$, the definition of $\mu$ would be problematic. However, for the affine system and quadratic control cost, which is main topic of adaptive dynamic programming \cite{lewis_vrabie_2009,Wang2009,theodorou2010,Song2014,Jiang2015,ISOCPI}, the minimizer of the right-hand side of Eq.~\eqref{eq:def-mu} uniquely exists and admits a closed analytic form. In particular, suppose that $b$ is linear in $a$ (then so is $\hat{b}$) and that $f$ is quadratic in $a$, and that $A$ is closed and convex. Then, $\mu$ can be obtained by projecting the minimizer of a quadratic function onto a closed convex set. See also \cite{Kerimkulov2020} for a more general discussion on the existence of $\mu$.

In order to rigorously state our algorithm and establish the desired convergence results, we need to pose some conditions on our problem. At first, we recall the useful uniform Lipschitz continuity and uniform boundness, which are able to ensure the existence and uniqueness of solutions to SDEs and BSDEs.
\begin{definition}[Uniform Lipschitz continuity and boundness]
    \label{def:uniform-Lipschitz-boundness}
    A continuous function $\phi(t,x,y)$ is said to be uniformly Lipschitz continuous in $x,y$ with respect to $t$ if there exists a positive constant $L$ such that for any $t\in E^1,\ x,x'\in E^2, \ y,y'\in E^3$,
    \begin{equation}
      \label{eq:uniform-Lipschitz}
      \|\phi(t,x,y) - \phi(t,x',y')\| \leq L \|x - x' \| + L\|y - y' \|,
    \end{equation}
    where $E^1, E^2, E^3$ are nonempty subsets of Euclidean spaces with proper dimensions. 

    Further, $\phi$ is said to be uniformly bounded if there exists a constant $L$ such that (suppose $0\in E^2,E^3$)
    \begin{equation}
      \label{eq:def-uniform-boundness}
      \|\phi(t,0,0)\| \leq L,\quad\forall t\in E^1.
    \end{equation}
    
    For convenience, let $C^{\mathrm{UniLip}}(E^1\times E^2\times E^3)$ denote the collection of functions satisfying Eq.~\eqref{eq:uniform-Lipschitz}, and $C^{\mathrm{UniLip}}_b(E^1\times E^2\times E^3)$ denote the collection of functions satisfying both Eq.~\eqref{eq:uniform-Lipschitz} and Eq.~\eqref{eq:def-uniform-boundness}.
\end{definition}
\begin{remark}
  Note that the Lipschitz condition may be only local if these subsets $E^1,E^2,E^3$ are bounded. In addition, any function valued in a bounded set is uniform bounded.

  A useful property of $C^{\mathrm{UniLip}}_b$ is the linear growth rate. Using the triangle inequality, any $\phi\in C^{\mathrm{UniLip}}_b(E^1,E^2,E^3)$ satisfies that, for any $(t,x,y)\in E^1\times E^2\times E^3$,
  \begin{equation*}
    \|\phi(t,x,y)\| \leq L(1 + \|x\| + \|y\|).
  \end{equation*}
\end{remark}

\begin{convention}
  For continuous function $\phi^1(t,x)$ or $\phi^2(x)$, we mean $\phi^1$ or $\phi^2\in C^{\mathrm{UniLip}}(E^1\times E^2\times E^3)$ if the extended function $\tilde{\phi}^1$ or $\tilde{\phi}^2\in C^{\mathrm{UniLip}}(E^1\times E^2\times E^3)$, where $$\tilde{\phi}^1(t,x,\cdot)\equiv\phi^1(t,x),\quad \tilde{\phi}^2(\cdot,x,\cdot)\equiv\phi^2(x).$$ We apply this simplification to $C^{\mathrm{UniLip}}_b$ too. 
\end{convention}

\begin{remark}
  For univariate function $\phi(\cdot)\in C^{\mathrm{UniLip}}$, the uniform boundness condition trivially holds.
\end{remark}

\begin{assumption}
  \label{assumption:1}
  Let the following assumptions hold.
  \begin{enumerate}
  \item The functions $\bar{b},\hat{b},\sigma,f, g\in C^{\mathrm{UniLip}}([0,T]\times\mathbb{R}^n\times A)$. Moreover, the given minimizing function $\mu\in C^{\mathrm{UniLip}}([0,T]\times\mathbb{R}^n\times \mathbb{R}^d)$.
  \item The functions $\bar{b},\mu,f$ are uniformly bounded: $\forall t\in[0,T]$,
    $$ \|\bar{b}(t,0,0)\| + \|\mu(t,0,0)\| + |f(t,0,0)| \leq L;$$
    and $\hat{b},\sigma$ are bounded: $\forall (t,x,a)\in[0,T]\times\mathbb{R}^n\times A$,
    \begin{equation*}
      \|\hat{b}(t,x,a)\| + \|\sigma(t,x)\|\leq L.
    \end{equation*}
  \item For any $\alpha\in C_b^{\mathrm{UniLip}}([0,T]\times\mathbb{R}^n)$, the linear Cauchy problem Eq.~\eqref{eq:PDE-characterization} has a smooth solution $w^\alpha\in C^{1,2}([0,T]\times\mathbb{R}^n)$ such that $\partial_xw^\alpha\in C^{\mathrm{UniLip}}_b$. Moreover, the HJB Eq.~\eqref{eq:HJB} has such a smooth solution $v^*$ too.
  \end{enumerate}
\end{assumption}
\begin{remark}
As a matter of fact, one essential condition on the existence of a smooth solution to HJB equation \eqref{eq:HJB} is the uniform elliptic condition: $\exists\delta > 0$ such that $y^\intercal\sigma\sigma^\intercal y\geq \delta y^\intercal y$ holds for any $(t,x,y)\in[0,T]\times\mathbb{R}^n\times\mathbb{R}^n$. Clearly, this condition is also sufficient to ensure the existence of $\bar{b}$ and $\hat{b}$.  
\end{remark}
\begin{remark}
  \label{remark:admissibility}
  Under Assumption~\ref{assumption:1}.1 and Assumption~\ref{assumption:1}.2, we have $b,\sigma,f,g,\mu\in C^{\mathrm{UniLip}}_b$, and thus, for any policy $\alpha\in C_b^{\mathrm{UniLip}}$ taking values in $A$, there is $b^\alpha\in C_b^{\mathrm{UniLip}}$. Hence, the solution to Eq.~\eqref{eq:general-SDE} uniquely exists for any $(t,x)$. Moreover, for any $\ell > 1$, $ \operatorname{\mathbb{E}}[\sup_{t\leq s\leq T}\|X^{\alpha,t,x}_s\|^\ell]$ is finite\cite{Liptser1977}.
\end{remark}
\begin{remark}
In linear quadratic problems, the assumptions that $f$ and $g$ are bounded and Lipschitz are violated. In practice, however, we can make some minor modifications to the problem in order to satisfy these assumptions. The idea is manually clipping the control and state in these functions below a certain threshold. For example, if $f(t,x,a)=x^\intercal Qx + a^\intercal Ra$, then  $\tilde{f}(t,x,a)=f(t,\tilde{x},\tilde{a})$ may be used, where $\tilde{x}$ and $\tilde{a}$ are component-wise clipped versions of $x$ and $a$, respectively. By choosing a sufficiently large threshold, we can still obtain a satisfactory suboptimal control policy of the original problem.
\end{remark}

We stress that Assumption~\ref{assumption:1} might not be the most general condition to make above assertions. But, it is very convenient to illustrate our key ideas without getting too involved into abstract theories of PDEs and SDEs. In particular, we have the following lemma to characterize value functions, which also serves as a starting point for the following subsections.

\begin{lemma}
  \label{lemma:value-function-characterization}
  Let Assumption~\ref{assumption:1} hold. Then, for any policy $\alpha\in C_b^{\mathrm{UniLip}}([0,T]\times\mathbb{R}^n)$ valued in $A$, the value function $v^\alpha$, defined by Eq.~\eqref{eq:general-SDE} and Eq.~\eqref{eq:def-valuefunction} is a unique solution to PDE~\eqref{eq:PDE-characterization} with $v^\alpha\in C^{1,2}$. Moreover, $v^\alpha$ admits the stochastic representation Eq.~\eqref{eq:FBSDE-characterization}.
\end{lemma}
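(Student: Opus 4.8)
The plan is to treat this as a Feynman-Kac verification problem. Assumption~\ref{assumption:1}.3 already grants a smooth solution $w^\alpha\in C^{1,2}$ to the linear Cauchy problem~\eqref{eq:PDE-characterization} with $\partial_xw^\alpha\in C^{\mathrm{UniLip}}_b$, so the remaining work is to identify this $w^\alpha$ with the probabilistically-defined value function $v^\alpha$ of~\eqref{eq:def-valuefunction}, to argue uniqueness, and to read off the representation~\eqref{eq:FBSDE-characterization}. First I would apply It\^o's rule to $s\mapsto w^\alpha(s,X_s^{\alpha,t,x})$ along the controlled diffusion solving~\eqref{eq:general-SDE}, exactly as in the display preceding~\eqref{eq:FBSDE-characterization} but with $w^\alpha$ in place of $v^\alpha$, and substitute the PDE relation $\mathscr{L}^\alpha w^\alpha=-f^\alpha$. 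Integrating over $[t,T]$ and invoking the terminal condition $w^\alpha(T,\cdot)=g$ yields
\begin{equation*}
  w^\alpha(t,x) = g(X_T^\alpha) + \int_t^Tf^\alpha(s,X_s^\alpha)\,ds - \int_t^T\langle \sigma^\intercal\partial_xw^\alpha(s,X_s^\alpha),dW_s\rangle.
\end{equation*}

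The crux is to show the stochastic integral on the right is a genuine martingale, so that taking expectations annihilates it and leaves $w^\alpha(t,x)=v^\alpha(t,x)$. For this I would verify $\sigma^\intercal\partial_xw^\alpha(\cdot,X^\alpha)\in\mathbb{H}^2$: since $\partial_xw^\alpha\in C^{\mathrm{UniLip}}_b$ has linear growth and $\sigma$ is bounded by Assumption~\ref{assumption:1}.2, the integrand is bounded by a constant multiple of $1+\|X_s^\alpha\|$, and the moment bound $\operatorname{\mathbb{E}}[\sup_{t\leq s\leq T}\|X_s^\alpha\|^\ell]<\infty$ from Remark~\ref{remark:admissibility} then delivers the required square-integrability. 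The same growth and moment estimates make $g(X_T^\alpha)$ and $\int_t^Tf^\alpha(s,X_s^\alpha)\,ds$ integrable, so the expectation is well defined and coincides with the right-hand side of~\eqref{eq:def-valuefunction}. This identifies $w^\alpha=v^\alpha$, whence $v^\alpha\in C^{1,2}$.

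Uniqueness would follow by running the same computation for an arbitrary competitor: any $C^{1,2}$ solution of~\eqref{eq:PDE-characterization} whose gradient has linear growth admits the probabilistic representation just derived, and must therefore agree with $v^\alpha$ pointwise. Finally, to obtain the FBSDE characterization~\eqref{eq:FBSDE-characterization} I would repeat the It\^o computation with lower limit $s$ in place of $t$ and set $Y_s:=v^\alpha(s,X_s^\alpha)$ and $Z_s:=\sigma^\intercal\partial_xv^\alpha(s,X_s^\alpha)$; the resulting identity is exactly the backward equation, with $(Y,Z)\in\mathbb{S}^2\times\mathbb{H}^2$ by the estimates above, and the $(Y,Z)$ pair is the unique solution by the standard theory for the (here $Y,Z$-independent) driver $f^\alpha$. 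I expect the only delicate point to be the martingale, rather than merely local-martingale, property of the $Z$-integral; everything else is a routine consequence of the linear-growth and moment bounds already assembled in Assumption~\ref{assumption:1} and Remark~\ref{remark:admissibility}.
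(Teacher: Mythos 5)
Your proof is correct and coincides in substance with the paper's: the paper disposes of this lemma by simply citing Remark~\ref{remark:admissibility} together with the Feynman--Kac theorem of Yong and Zhou (their Theorem~7.4.1), and your It\^o-plus-martingale verification (linear growth of $\partial_x w^\alpha$, boundedness of $\sigma$, and the moment bounds on $X^\alpha$ making the stochastic integral a true martingale) is precisely the standard proof of that cited result. The one minor difference is the uniqueness class: you obtain uniqueness among $C^{1,2}$ solutions whose gradient has linear growth (the class singled out by Assumption~\ref{assumption:1}.3), whereas the textbook statement gives uniqueness under polynomial growth of the solution itself via a localization argument; either reading is consistent with the lemma as stated.
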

\begin{proof}
  This is a direct consequence of Remark~\ref{remark:admissibility} and \cite[Theorem~7.4.1]{yong_zhou_1999}.
\end{proof}
\begin{remark}
  \label{remark:chain-admissibility}
  Under Assumption~\ref{assumption:1}.3, $\partial_xv^\alpha\in C^{\mathrm{UniLip}}_b$, and thus,  $\mu(\cdot,\cdot,\sigma^\intercal\partial_xv^\alpha(\cdot,\cdot))$ is a policy valued in $A$ and lies in $C^{\mathrm{UniLip}}$. It is also important to note that in the stochastic representation Eq~\eqref{eq:FBSDE-characterization}, the term $\sigma^\intercal\partial_xv^\alpha$ is encoded in the $Z$ process. Theorefore, obtaining $Z$ is to some extent sufficient to construct the policy $\mu(\cdot, \cdot, \sigma^\intercal\partial_xv^\alpha(\cdot,\cdot))$.
\end{remark}

\subsection{The standard policy iteration subroutine}

Let us focus on the HJB Eq.~\eqref{eq:HJB} and the PDE characterization Eq.~\eqref{eq:PDE-characterization}. Suppose $\alpha$ is an optimal policy, then $v^\alpha$ satisfies both of these equations. Combining Eq.~\eqref{eq:HJB} and Eq.~\eqref{eq:PDE-characterization}, for any $ (t,x)\in[0,T]\times\mathbb{R}^n$, we have
\begin{equation}
  \label{eq:verification-theorem}
  \mathscr{L}^\alpha v^\alpha(t,x) + f^\alpha(t,x) = \inf_{a\in A}\{\mathscr{L}^av^\alpha(t,x) + f^a(t,x)\}.
\end{equation}
Conversely, if this equation is satisfied by some policy $\alpha$, then its value function $v^\alpha$ satisfies the HJB equation. Hence, the central idea of policy iteration is to force Eq.~\eqref{eq:verification-theorem} to hold. 

The standard policy iteration algorithm works as follows.
\begin{enumerate}
\item Given a policy $\alpha$, find its value function $v^\alpha$ by Eq.~\eqref{eq:PDE-characterization}.
\item Given $v^\alpha$, find a policy $\alpha'$ such that for any $(t,x)$,
\begin{equation*}
  \alpha'(t,x) = \operatorname*{arginf}_{a\in A} \{\mathscr{L}^a v^\alpha(t,x) + f^a(t,x)\}.
\end{equation*}
\end{enumerate}
Alternatively repeating these two steps generates a sequence of policies. The first step is also known as policy evaluation, and the second step is policy improvement. According to Eq.~\eqref{eq:def-mu}, the policy improvement step can also be realized by setting
\begin{equation}
  \label{eq:improved-policy}
\alpha'(t,x) \coloneqq \mu(t,x,z(t,x)),\quad\forall (t,x)\in[0,T]\times\mathbb{R}^n,
\end{equation}
where $z(\cdot,\cdot)=\sigma^\intercal\partial_xv^\alpha(\cdot,\cdot)$. For simplicity, we combine policy evaluation and policy improvement into a single procedure and refer to it as the standard policy iteration subroutine, or the PDE-based subroutine; see Algorithm~\ref{alg:PDE-based-subroutine}. The global convergence result of GPI equipped with this subroutine is provided in Proposition~\ref{prop:PDE-convergence}.

\begin{algorithm}
\caption{A PDE-based subroutine of GPI.\label{alg:PDE-based-subroutine}}
\begin{algorithmic}[1]
\REQUIRE{a feedback control policy $\alpha$.}
\ENSURE{a feedback control policy $\alpha'$ not worse than $\alpha$.}
\STATE Obtain the value function $v^\alpha$ by Eq.~\eqref{eq:PDE-characterization}.
\STATE Construct the output policy by Eq.~\eqref{eq:improved-policy} with $z\leftarrow \sigma^\intercal\partial_xv^\alpha$.
\end{algorithmic}
\end{algorithm}

\begin{proposition}
  \label{prop:PDE-convergence}
  Let Assumption~\ref{assumption:1} hold. Starting at an initial policy $\alpha^0$ valued in $A$, let $\{\alpha_n\}_{n\in\mathbb{N}}$ denote the policy sequence generated by GPI equipped with Algorithm~\ref{alg:PDE-based-subroutine}. If $\alpha^0\in C_b^{\mathrm{UniLip}}([0,T]\times\mathbb{R}^n)$ is valued in $A$, then $\alpha^n$ is admissible for any $n\geq 0$. For any $(t,x)\in[0,T]\times\mathbb{R}^n$, the cost sequence $\{v^{\alpha^n}(t,x)\}_{n\in\mathbb{N}}$ is monotonically decreasing to $v^*(t,x)$. Moreover, there exists a constant $C=C(t,x)$ depending on $(t,x)$ and a constant $q\in(0,1)$ independent to $(t,x)$ such that
  \begin{equation}
    \label{eq:PDE-convergence}
    |v^{\alpha^n}(t,x) - v^*(t,x)| \leq C(t,x)q^n,\quad\text{for any } n \geq 0.
  \end{equation}
\end{proposition}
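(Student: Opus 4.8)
The plan is to prove the three assertions—admissibility, monotone convergence to $v^*$, and the geometric rate—in that order, using the PDE/FBSDE characterization of Lemma~\ref{lemma:value-function-characterization} as the workhorse and a Feynman--Kac comparison as the recurring tool. For \emph{admissibility}, I would induct on $n$, the base case being the hypothesis on $\alpha^0$. For the step, Lemma~\ref{lemma:value-function-characterization} gives $v^{\alpha^n}\in C^{1,2}$ with $\partial_xv^{\alpha^n}\in C^{\mathrm{UniLip}}_b$ (Assumption~\ref{assumption:1}.3), so $z^n\coloneqq\sigma^\intercal\partial_xv^{\alpha^n}\in C^{\mathrm{UniLip}}_b$ since $\sigma$ is bounded and Lipschitz. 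Exactly as in Remark~\ref{remark:chain-admissibility}, $\alpha^{n+1}=\mu(\cdot,\cdot,z^n)$ is then valued in $A$ by \eqref{eq:def-mu} and inherits uniform Lipschitz continuity from the Lipschitz bounds on $\mu$ and $z^n$; its uniform boundedness follows from $\|\mu(t,0,z^n(t,0))\|\le\|\mu(t,0,0)\|+L\|z^n(t,0)\|$ together with Assumption~\ref{assumption:1}.2. Hence $\alpha^{n+1}\in C_b^{\mathrm{UniLip}}$, closing the induction, and Remark~\ref{remark:admissibility} supplies well-posedness and finite moments of $X^{\alpha^n,t,x}$.

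For \emph{monotonicity and the lower bound}, since $\alpha^{n+1}$ is the pointwise arginf in \eqref{eq:improved-policy}, the greedy inequality $\mathscr{L}^{\alpha^{n+1}}v^{\alpha^n}+f^{\alpha^{n+1}}=\inf_{a\in A}\{\mathscr{L}^av^{\alpha^n}+f^a\}\le\mathscr{L}^{\alpha^n}v^{\alpha^n}+f^{\alpha^n}=0$ holds, the last equality being the PDE characterization \eqref{eq:PDE-characterization}. Subtracting the PDE satisfied by $v^{\alpha^{n+1}}$ gives $\mathscr{L}^{\alpha^{n+1}}(v^{\alpha^n}-v^{\alpha^{n+1}})\le0$ with zero terminal data. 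Applying It\^o's rule to $v^{\alpha^n}-v^{\alpha^{n+1}}$ along $X^{\alpha^{n+1},t,x}$ and taking expectations—the stochastic integral being a true martingale because $\partial_x(v^{\alpha^n}-v^{\alpha^{n+1}})\in C^{\mathrm{UniLip}}_b$ and $X$ has finite moments—yields
\[
 v^{\alpha^n}(t,x)-v^{\alpha^{n+1}}(t,x)=\operatorname{\mathbb{E}}\Bigl[-\!\int_t^T\!\bigl(\mathscr{L}^{\alpha^{n+1}}v^{\alpha^n}+f^{\alpha^{n+1}}\bigr)(s,X_s)\,ds\Bigr]\ge0 .
\]
The same comparison against the HJB inequality $\mathscr{L}^{\alpha^{n+1}}v^*+f^{\alpha^{n+1}}\ge\inf_a\{\mathscr{L}^av^*+f^a\}=0$ gives $v^{\alpha^{n+1}}\ge v^*$. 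Thus $\{v^{\alpha^n}(t,x)\}$ is decreasing and bounded below by $v^*(t,x)$, hence convergent.

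For the \emph{geometric rate}, write $e_n\coloneqq v^{\alpha^n}-v^*\ge0$ and let $\alpha^*\coloneqq\mu(\cdot,\cdot,z^*)$ with $z^*\coloneqq\sigma^\intercal\partial_xv^*$ be the policy greedy for $v^*$, so $\mathscr{L}^{\alpha^*}v^*+f^{\alpha^*}=0$. The identical comparison along $X^{\alpha^{n+1},t,x}$ represents $e_{n+1}(t,x)=\operatorname{\mathbb{E}}\int_t^T G_n(s,X_s)\,ds$ with $G_n\coloneqq\mathscr{L}^{\alpha^{n+1}}v^*+f^{\alpha^{n+1}}\ge0$. Because the second-order and control-free first-order terms of $\mathscr{L}^a$ do not depend on $a$, $G_n$ equals the suboptimality of $\alpha^{n+1}$ for the reduced Hamiltonian $h(z,a)\coloneqq\langle\hat{b}^a,z\rangle+f^a$ at $z^*$; using $h(z^n,\alpha^{n+1})\le h(z^n,\alpha^*)$ and cancelling yields $0\le G_n\le\langle\hat{b}^{\alpha^{n+1}}-\hat{b}^{\alpha^*},z^*-z^n\rangle$, and the Lipschitz bounds on $\hat{b}$ and $\mu$ give $G_n\le L^2\|z^n-z^*\|^2\le L^4\|\partial_xe_n\|^2$. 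I would then invoke a priori gradient estimates for the linear Cauchy problem solved by $e_n$ (equivalently, $\mathbb{H}^2$ bounds on the $Z$-component of its BSDE representation) to control $\|\partial_xe_n\|$ by $e_n$ on the support of the flow, and close a Gronwall-type recursion over the finite horizon $[0,T]$. The moment bounds of Remark~\ref{remark:admissibility} absorb the spatial growth into a prefactor $C(t,x)$, while the contraction factor $q\in(0,1)$ comes out depending only on $T$, $L$, and the ellipticity constant—hence not on $(t,x)$.

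The \emph{main obstacle} is the last point: turning the Hamiltonian-gap representation into a genuine contraction with a rate $q$ uniform in $(t,x)$ over the unbounded domain $\mathbb{R}^n$. This needs (i) an estimate bounding the gradient gap $\partial_xe_n$ by the value gap $e_n$ \emph{uniformly in} $n$—where the uniform ellipticity and the uniform Lipschitz/boundedness of Assumption~\ref{assumption:1} are essential—and (ii) a careful accounting of the finite-horizon structure so the geometric factor survives the iteration; I would follow the technique of Kerimkulov \emph{et al.}~\cite{Kerimkulov2020} here. The quadratic bound $G_n\lesssim\|\partial_xe_n\|^2$ additionally explains, in the spirit of the Newton's-method interpretation noted in the Introduction, why the asymptotic behavior is in fact faster than the global linear rate asserted.
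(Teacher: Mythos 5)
Your admissibility induction and your It\^o-comparison proof of monotonicity are correct and match what the paper intends (the paper simply cites Remark~\ref{remark:chain-admissibility} and \cite{ni_fang_2013} for these parts). Your Hamiltonian-gap representation $e_{n+1}(t,x)=\operatorname{\mathbb{E}}\int_t^TG_n(s,X^{\alpha^{n+1}}_s)\,ds$ with $0\le G_n\le L^2\|z^n-z^*\|^2$ is also correct, and it is one half of the needed argument. The genuine gap is in how you propose to close the recursion. You want an a priori estimate that ``controls $\|\partial_xe_n\|$ by $e_n$ on the support of the flow''; no such bound exists: pointwise, a small function can have a large gradient, and integrated BSDE/parabolic estimates bound the $Z$-component of $e_n$ by its terminal data and driver (here the driver is $G_{n-1}$, which is itself \emph{quadratic} in the previous gradient gap, so this route produces fourth powers and does not close). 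Moreover, even if one had $\operatorname{\mathbb{E}}\int\|z^n-z^*\|^2\le C\,e_n$, combining with your step would only give $e_{n+1}\le L^2C\,e_n$ with no reason for $L^2C<1$. What actually closes the loop --- this is \cite[Lemma~A.5]{Kerimkulov2020}, the tool the paper invokes inside the proof of Theorem~\ref{theorem:error-offpolicy} and then specializes via Remark~\ref{remark:error-offpolicy-proof-proposition-1} --- is a weighted BSDE stability estimate: writing the BSDEs for $v^{\alpha^n}$ and $v^*$ along one common forward process, the pair $(Y^n-Y^*,Z^n-Z^*)$ solves a BSDE with zero terminal value whose driver difference is \emph{Lipschitz} (not quadratic) in $\|Z^{n-1}-Z^*\|$ and $\|Z^n-Z^*\|$; applying It\^o to $e^{\gamma s}|Y^n_s-Y^*_s|^2$ and taking $\gamma$ large relative to the Lipschitz constants absorbs the $\|Z^n-Z^*\|$ term into the left-hand side and yields
\begin{equation*}
\begin{aligned}
&|Y^n_t-Y^*_t|^2+\operatorname{\mathbb{E}}\int_t^Te^{\gamma(s-t)}\|Z^n_s-Z^*_s\|^2\,ds\\
&\qquad\le q\,\operatorname{\mathbb{E}}\int_t^Te^{\gamma(s-t)}\|Z^{n-1}_s-Z^*_s\|^2\,ds,
\end{aligned}
\end{equation*}
with $q\in(0,1)$ depending only on the Lipschitz data. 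The contraction is thus obtained directly in the gradient gap ($b_n\le qb_{n-1}$ in the paper's notation), and iterating gives $|v^{\alpha^n}(t,x)-v^*(t,x)|^2\le q^nb_0$, i.e.\ Eq.~\eqref{eq:PDE-convergence} with $C(t,x)=\sqrt{b_0}$ and rate $\sqrt{q}$. You name-check Kerimkulov \emph{et al.}, but the mechanism you describe is not theirs, and as stated it would fail.

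A secondary but real problem: your integrated quantities $\operatorname{\mathbb{E}}\int_t^T\|z^n-z^*\|^2(s,X^{\alpha^{n+1}}_s)\,ds$ live on a flow that changes with $n$, so they cannot be chained into a single recursion. Lemma~\ref{lemma:change-of-policy} only transfers ``$=0$ almost everywhere'' statements between flows; it does not give quantitative norm equivalence, since the Girsanov densities are not uniformly controlled in the way a recursion would require. The paper sidesteps this entirely by running every BSDE representation along one fixed forward process (the off-policy representation of Lemma~\ref{lemma:off-policy-characterization} with $\alpha^b=\alpha^0$), which is precisely why Proposition~\ref{prop:PDE-convergence} is proved in the paper as the $\epsilon_n\equiv0$ case of Theorem~\ref{theorem:error-offpolicy} rather than by an on-policy comparison like yours.
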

\begin{proof}
  The assertion of admissibility is a direct consequence of Remark~\ref{remark:chain-admissibility}. The monotonicity is also expected due to the definition of $\mu$\cite{ni_fang_2013}. Under our assumptions, Eq.~\eqref{eq:PDE-convergence} can be demonstrated by following the proof of \cite[Theorem~4.1]{Kerimkulov2020}, so we omit this technical proof here. The proof of Eq.~\eqref{eq:PDE-convergence} can also be viewed as a simplified version of the proof of Theorem~\ref{theorem:error-offpolicy}; see Remark~\ref{remark:error-offpolicy-proof-proposition-1} for more details.
\end{proof}

\subsection{Two key issues}\label{sec:two-key-issues}
To this end, we have formulated the PDE-based subroutine in Algorithm~\ref{alg:PDE-based-subroutine} and developed corresponding convergence results. Sadly, we have to admit that the global linear convergence rate in Proposition~\ref{prop:PDE-convergence} generally cannot be achieved with a practical program. The dilemma arises from the policy evaluation step.

The first issue is the design of numerical methods for policy evaluation. In Algorithm~\ref{alg:PDE-based-subroutine}, policy evaluation is formulated as solving PDEs, which generally has no closed form solution and has to be solved with numerical methods. Traditional numerical ways for PDEs require discretizing the time-state space, and thus, suffer from the curse of dimensionality. Moreover, extending traditional ways to model-free settings seems to be challenging. Based on these considerations, another two policy iteration subroutines utilizing the FBSDE characterization of value functions are proposed in Section~\ref{sec:FBSDE-subroutines}. We also develop a numerical method for solving FBSDEs by optimizing a novel criterion; see Section~\ref{sec:approximation}.

The second issue is more subtle. Since numerical methods cannot be expected to provide the exact solution, especially after time discretization, approximation errors are generally inevitable. Consequently, the improved policy based on this inexact solution is different from the expected output policy. To address this issue, we quantify these approximation errors as $\epsilon_n$ and analyze the convergence of the policy iteration with $\epsilon_n > 0$. We discuss this topic at the end of Section~\ref{sec:FBSDE-subroutines}.

\section{FBSDE-based Policy Iteration Algorithms}\label{sec:FBSDE-subroutines}

In this section, we propose two FBSDE-based policy iteration algorithms. The convergence result is established by showing the equivalence between the PDE-based and FBSDE-based policy iteration subroutines. At last, we present a robust convergence result with respect to approximation errors. In all following sections, the initial pair of time states $(t, x)$ is fixed.

\subsection{The on-policy subroutine}

In the PDE-based subroutine, the next trial policy is constructed by $\mu$ and $\sigma^\intercal\partial_xv^\alpha$, where the latter is obtained via solving PDE \eqref{eq:PDE-characterization}. In view of  Lemma~\ref{lemma:value-function-characterization}, it is very natural to consider carrying out policy evaluation by solving FBSDE \eqref{eq:FBSDE-characterization}. We formulate this idea in Algorithm~\ref{alg:on-policy-FBSDE}.

The second step of Algorithm~\ref{alg:on-policy-FBSDE} is the key of this work. Instead of evaluating $v^\alpha$ via a linear PDE and substituing $\partial_xv^\alpha$ into the policy improvement step, we directly obtain a $z^\alpha$ term via an optimization problem, and then construct the next trial policy based on it. We will discuss in detail how to minimize the objective function Eq.~\eqref{eq:on-policy-op} in Section~\ref{sec:approximation}. Here, we simply assume that there is a method that can be used to determine the global solution $z^\alpha$.

\begin{algorithm}
\caption{The on-policy subroutine of GPI.\label{alg:on-policy-FBSDE}}
\begin{algorithmic}[1]
\REQUIRE{a feedback control policy $\alpha$; an initial point $(t,x)$.}
\ENSURE{a feedback control policy $\alpha'$ not worse than $\alpha$.}
\STATE
Find the solution $X^\alpha$ to the forward SDE~\eqref{eq:general-SDE}.
\STATE
Find an optimal solution $z^\alpha$ to the optimization problem
\begin{equation}
  \label{eq:on-policy-op}
  \min_{z\in C^{\mathrm{UniLip}}_b} \epsilon^\alpha\coloneqq  \operatorname{\mathbb{E}}\int_t^T\|z(s,X^\alpha_s) - Z^\alpha_s\|^2\,ds,
\end{equation}
where $Z^\alpha$ is a part of the solution to the following BSDE
\begin{equation*}
    Y_s = g(X^\alpha_T) + \int_s^Tf^\alpha(\tau,X^\alpha_\tau)\,d\tau - \int_s^T \langle Z_\tau, dW_\tau\rangle.
\end{equation*}
\STATE
Construct the output policy by Eq.~\eqref{eq:improved-policy} with $z\leftarrow z^\alpha$.
\end{algorithmic}
\end{algorithm}

Comparing the policies returned by Algorithm~\ref{alg:on-policy-FBSDE} and Algorithm~\ref{alg:PDE-based-subroutine}, it can be seen that $z^\alpha$ plays the role of $\sigma^\intercal\partial_xv^\alpha$. According to Lemma~\ref{lemma:value-function-characterization}, $\sigma^\intercal\partial_xv^\alpha$ is indeed a global solution to that optimization problem. Noting that $Z^\alpha_s=\sigma^\intercal\partial_xv^\alpha(s,X_s^\alpha)$ holds almost everywhere on the product space $[t,T]\times\Omega$, we can rewrite the objective function Eq.~\eqref{eq:on-policy-op} as
\begin{equation}
\label{eq:def-error-h}
\epsilon^\alpha(z)  = \operatorname{\mathbb{E}}\int_t^Th(s,X_s^{\alpha})\,ds,
\end{equation}
where $h(\cdot,\cdot)\coloneqq \|z(\cdot,\cdot)-\sigma^\intercal\partial_xv^\alpha(\cdot,\cdot)\|^2\geq 0$. Hence, we have $\epsilon^\alpha(z^\alpha)=0$. In the opposite direction, however, one cannot say that $\sigma^\intercal\partial_xv^\alpha$ is the unique optimal solution in $C^{\mathrm{UniLip}}_b$, since $h\equiv0$ is not the necessary condition of $\epsilon^\alpha=0$. In fact, the necessary and suffcient condition is $h$ equals zero almost everywhere on the product space under the measure induced by $X^\alpha(s,\omega)$. To put it another way, we can only say that $z^\alpha(\cdot,\cdot)$ equals $\sigma^\intercal\partial_xv^\alpha(\cdot,\cdot)$ almost everywhere along the process $X^{\alpha,t,x}$. Fortunately, Lemma~\ref{lemma:change-of-policy} below suggests that this almost everywhere identity is enough to guarantee that Algorithm~\ref{alg:PDE-based-subroutine} and Algorithm~\ref{alg:on-policy-FBSDE} are equivalent, in the sense that the returned policies have the same cost value.

Before proceeding, we would like to clarify one more point regarding this algorithm. The first two steps for obtaining $z^\alpha$ can be implemented in a pure data-driven fashion. The forward state process $\{X_s^\alpha\}_{t\leq s\leq T}$ can be sampled by sending the current policy $\alpha$ to the dynamic system and observe the state trajectory. Furthermore, it is possible to solve that optimization problem using only samples without knowing the exact solution $(Y^\alpha,Z^\alpha)$. This is the reason why we call Algorithm~\ref{alg:on-policy-FBSDE} on-policy. In the next subsection, we introduce the off-policy subroutine, where the forward SDE is driven by a fixed behavior policy $\alpha^b$ instead of the current policy $\alpha$.

\begin{lemma}
  \label{lemma:change-of-policy}
  Let Assumption~\ref{assumption:1} hold. For any $\alpha^1,\alpha^2\in C_b^{\mathrm{UniLip}}([0,T]\times\mathbb{R}^n)$, let $X^1,X^2$ be their state processes, respectively. Then, for any nonnegative measurable function $h(\cdot,\cdot)\geq0$, the following statements are equivalent:
  \begin{enumerate}
  \item $h(s,X_s^1)=0$ holds $ds\otimes d\mathbb{P}$-a.e. on $[t,T]\times\Omega$;
  \item $h(s,X_s^2)=0$ holds $ds\otimes d\mathbb{P}$-a.e. on $[t,T]\times\Omega$.
  \end{enumerate}
\end{lemma}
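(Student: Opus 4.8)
The plan is to prove the two implications simultaneously by showing that the time-state occupation measures induced by $X^1$ and $X^2$ have the same null sets. For $i=1,2$ define a measure $\nu^i$ on $[t,T]\times\mathbb{R}^n$ by $\nu^i(A)=\int_t^T\mathbb{P}\bigl((s,X_s^i)\in A\bigr)\,ds$, where joint measurability of $(s,\omega)\mapsto X_s^i(\omega)$ (progressive measurability) together with measurability of $h$ makes all the objects below well defined. Writing $N\coloneqq\{(s,x):h(s,x)>0\}$, the assertion that $h(s,X_s^i)=0$ holds $ds\otimes d\mathbb{P}$-a.e. is, by Tonelli's theorem and $h\geq0$, precisely the statement $\nu^1(N)=0$, respectively $\nu^2(N)=0$. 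Hence the lemma reduces to showing $\nu^1$ and $\nu^2$ are mutually absolutely continuous, and in fact I will establish the sharper fact that for a.e.\ fixed $s$ the laws of $X_s^1$ and $X_s^2$ on $\mathbb{R}^n$ have identical null sets.

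The key tool is Girsanov's theorem, and the reason it applies cleanly is the structural decomposition $b=\bar b+\sigma\hat b$ imposed in Section~\ref{sec:reformulation}. Indeed, the two drifts differ only through the control-coupled term, $b^{\alpha^2}-b^{\alpha^1}=\sigma(\hat b^{\alpha^2}-\hat b^{\alpha^1})$, so the drift difference lies in the range of $\sigma$ and the integrand $\theta_s\coloneqq\hat b^{\alpha^2}(s,X_s^1)-\hat b^{\alpha^1}(s,X_s^1)$ can be chosen without ever inverting $\sigma$. By Assumption~\ref{assumption:1}.2 the map $\hat b$ is bounded, so $\theta$ is bounded, Novikov's condition holds, and the stochastic (Dol\'eans--Dade) exponential $\Lambda=\exp\bigl(\int_t^T\langle\theta_\tau,dW_\tau\rangle-\tfrac12\int_t^T\|\theta_\tau\|^2\,d\tau\bigr)$ defines a probability measure $\mathbb{Q}$ on $\mathcal{F}_T$ with $d\mathbb{Q}/d\mathbb{P}=\Lambda>0$ a.s., whence $\mathbb{Q}\sim\mathbb{P}$. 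Under $\mathbb{Q}$ the process $\tilde W_s=W_s-\int_t^s\theta_\tau\,d\tau$ is a Brownian motion, and substituting $dW_s=d\tilde W_s+\theta_s\,ds$ into the forward SDE for $X^1$ converts its drift from $b^{\alpha^1}$ into $b^{\alpha^1}+\sigma\theta=\bar b+\sigma\hat b^{\alpha^2}=b^{\alpha^2}$. Thus $X^1$ is, under $\mathbb{Q}$, a weak solution of exactly the SDE that $X^2$ solves under $\mathbb{P}$; by the strong (hence weak) uniqueness guaranteed under Assumption~\ref{assumption:1} (Remark~\ref{remark:admissibility}), the law of $X^1$ under $\mathbb{Q}$ coincides with the law of $X^2$ under $\mathbb{P}$.

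It remains to descend from path laws to time-marginals. Fix $s\in[t,T]$ and a Borel set $B\subseteq\mathbb{R}^n$. Since $\{X_s^1\in B\}\in\mathcal{F}_s\subseteq\mathcal{F}_T$ and $\Lambda>0$ $\mathbb{P}$-a.s., the identity $\mathbb{Q}(X_s^1\in B)=\operatorname{\mathbb{E}}^{\mathbb{P}}[\Lambda\mathbf{1}_{\{X_s^1\in B\}}]$ shows that $\mathbb{P}(X_s^1\in B)=0$ if and only if $\mathbb{Q}(X_s^1\in B)=0$; as the latter equals $\mathbb{P}(X_s^2\in B)$ by the previous paragraph, the laws of $X_s^1$ and $X_s^2$ share the same null sets. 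Taking $B=\{x:h(s,x)>0\}$ yields $\mathbb{P}(h(s,X_s^1)>0)=0\iff\mathbb{P}(h(s,X_s^2)>0)=0$ for each $s$, and integrating in $s$ (again by Tonelli and nonnegativity) upgrades this pointwise equivalence to $\nu^1(N)=0\iff\nu^2(N)=0$, which is the claim. The conceptual heart of the argument, and the place requiring the most care, is exactly the decomposition $b=\bar b+\sigma\hat b$: it guarantees that the drift change stays in the range of $\sigma$ so that Girsanov applies with a bounded integrand and no degeneracy issue arises from a possibly non-invertible $\sigma$. The remaining technical point is confirming the joint measurability needed to pass between the product-space a.e.\ statement and the a.e.-$s$ statement about the marginals, which follows from progressive measurability of $X^i$ and measurability of $h$.
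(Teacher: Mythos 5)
Your proof is correct and takes essentially the same route as the paper's: a Girsanov change of measure made possible by the decomposition $b=\bar b+\sigma\hat b$ and the boundedness of $\hat b$ (Novikov), followed by weak uniqueness inherited from strong uniqueness of the Lipschitz SDE, and finally the transfer of null sets via equivalence of the measures. The only cosmetic difference is that you tilt once, turning $X^1$ into a weak solution of $X^2$'s equation under $\mathbb{Q}$, whereas the paper tilts both processes onto a common reference SDE with drift $\bar b$; both bookkeeping schemes yield the same conclusion.
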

\begin{proof}
  Consider the following two auxiliary processes
  \begin{align*}
    W_s^i &= W_s + \int_t^s\hat{b}^{\alpha^i}(\tau,X_\tau^i)\,d\tau,\quad s\in[t,T],\quad i=1,2.
  \end{align*}
  Noting that $\{\hat{b}^{\alpha^i}(s,X_s^i);t\leq s\leq T\}$ is bounded and thus satisfies Novikov condition, there exists probability measure $\mathbb{P}^i$, equivalent to $\mathbb{P}$, such that $W^i$ becomes a standard Brownian motion under $\mathbb{P}^i$. This is known as the  Girsanov's theorem\cite[Chapter~3]{karatzas_shreve_1998}. Therefore, $(X^1,W^1,\mathbb{P}^1)$ and $(X^2,W^2,\mathbb{P}^2)$ are two weak solutions to the following SDE:
  \begin{equation*}
    X_s = x + \int_t^s\bar{b}(\tau,X_\tau)\,d\tau + \int_t^s\sigma(\tau,X_\tau)\,dW_\tau.
  \end{equation*}
  By the uniformly Lipschitz continuity and boundness of $\bar{b}$ and $\sigma$, the strong existence and uniqueness hold for this SDE. Then the weak uniqueness in the sense of probability law holds too, namely, $X^1$ and $X^2$ have the same law. Thus, the integral of $h(s,X^1_s)$ equals the integral of $h(s,X_s^2)$:
  \begin{equation*}
    \int_t^T\biggl(\int h(s,X_s^1)\,d\mathbb{P}^1\biggr)\,ds = \int_t^T\biggl(\int h(s,X_s^2)\,d\mathbb{P}^2\biggr)\,ds.
  \end{equation*}
  We conclude that $h(s,X_s^1)=0$ holds $ds\otimes\,d\mathbb{P}^1$-a.e. if and only if $h(s,X_s^2)=0$ holds $ds\otimes\,d\mathbb{P}^2$-a.e.. The proof is finished by noting that $\mathbb{P},\mathbb{P}^1,\mathbb{P}^2$ are equivalent to each other.
\end{proof}

This lemma offers the freedom of changing the underlying process in the optimization problem of the on-policy subroutine. By setting $h(\cdot,\cdot)$ as Eq.~\eqref{eq:def-error-h}, this lemma suggests that minimizing the $ \operatorname{\mathbb{E}}\int_t^Th(s,X_s^{\alpha})$ to zero is equivalent to minimizing $ \operatorname{\mathbb{E}}\int_t^Th(s,X_s^{\alpha^b})$ to zero for any $\alpha^b\in C^{\mathrm{UniLip}}_b$. Thus, it is also reasonable to choose a policy $\alpha^b$ different from $\alpha$ and optimize the integral of $h$ along $X^{\alpha^b}$. On the other hand, let $ \operatorname{\mathbb{E}}\int_t^Th(s,X_s^{\alpha})=0$ hold and $\alpha'$ be the policy returned by Algorithm~\ref{alg:PDE-based-subroutine}. Then, $h(s,X_s^{\alpha'})=0$ holds almost everywhere on the product space $[t,T]\times\Omega$. This argument is also the key to prove the following equivalence between the PDE-based subroutine and the on-policy subroutine.

\begin{theorem}
  \label{theorem:equivalence-PDE-FBSDE}
  Let Assumption~\ref{assumption:1} hold. For an input policy $\alpha\in C_b^{\mathrm{UniLip}}([0,T]\times\mathbb{R}^n)$ valued in $A$, let $\alpha_{1}'$ and $\alpha_{2}'$ denote the outputs of Algorithm~\ref{alg:PDE-based-subroutine} and Algorithm~\ref{alg:on-policy-FBSDE}, respectively. Then, $\alpha'_{1}$ and $\alpha'_{2}$ generate the ``same'' trajectory starting at $(t,x)$:
  \begin{equation*}
    X_s^{\alpha'_{1},t,x} = X_s^{\alpha'_{2},t,x},\quad ds\otimes d\mathbb{P}\text{-a.e. on }[t,T]\times\Omega.
  \end{equation*}
Moreover, $v^{\alpha'_{1}}(t,x) = v^{\alpha'_{2}}(t,x)$.
\end{theorem}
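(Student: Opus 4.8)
The plan is to exploit that the optimizer $z^\alpha$ of Eq.~\eqref{eq:on-policy-op}, while possibly differing from $\sigma^\intercal\partial_x v^\alpha$ as a function on $[0,T]\times\mathbb{R}^n$, agrees with it $ds\otimes d\mathbb{P}$-a.e.\ along any admissible trajectory, and that both output policies are obtained by feeding the respective $z$-terms into the \emph{same} minimizing map $\mu$ via Eq.~\eqref{eq:improved-policy}. First I would confirm that both outputs are admissible. The policy $\alpha_1'=\mu(\cdot,\cdot,\sigma^\intercal\partial_x v^\alpha)$ lies in $C^{\mathrm{UniLip}}_b$ and is valued in $A$ by Remark~\ref{remark:chain-admissibility}; likewise $\alpha_2'=\mu(\cdot,\cdot,z^\alpha)\in C^{\mathrm{UniLip}}_b$, since $z^\alpha\in C^{\mathrm{UniLip}}_b$ and $\mu\in C^{\mathrm{UniLip}}$. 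Hence, by Remark~\ref{remark:admissibility}, the forward SDE~\eqref{eq:general-SDE} driven by either policy admits a unique strong solution, $X^{\alpha_1'}$ or $X^{\alpha_2'}$.

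The core step is to transfer the optimality identity from the input process to the output trajectory. Since $\epsilon^\alpha(z^\alpha)=0$, the nonnegative integrand $h=\|z^\alpha-\sigma^\intercal\partial_x v^\alpha\|^2$ of Eq.~\eqref{eq:def-error-h} satisfies $h(s,X_s^\alpha)=0$ a.e. Invoking Lemma~\ref{lemma:change-of-policy} with $\alpha^1=\alpha$ and $\alpha^2=\alpha_1'$ yields $h(s,X_s^{\alpha_1'})=0$ a.e., i.e.\ $z^\alpha$ and $\sigma^\intercal\partial_x v^\alpha$ coincide along $X^{\alpha_1'}$. Feeding these equal arguments into $\mu$ then gives $\alpha_2'(s,X_s^{\alpha_1'})=\alpha_1'(s,X_s^{\alpha_1'})$ a.e., whence $b^{\alpha_2'}(s,X_s^{\alpha_1'})=b^{\alpha_1'}(s,X_s^{\alpha_1'})$ a.e. Consequently $X^{\alpha_1'}$ solves the forward SDE with drift $b^{\alpha_2'}$ --- the very equation defining $X^{\alpha_2'}$ --- and strong uniqueness forces $X^{\alpha_1'}=X^{\alpha_2'}$ indistinguishably on $[t,T]\times\Omega$, which is the trajectory claim.

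For the value equality I would substitute this identity into Eq.~\eqref{eq:def-valuefunction}: indistinguishability gives $g(X_T^{\alpha_1'})=g(X_T^{\alpha_2'})$ a.s., while coincidence of the controls along the common path gives $f^{\alpha_1'}(s,X_s^{\alpha_1'})=f^{\alpha_2'}(s,X_s^{\alpha_2'})$ a.e., so taking expectations yields $v^{\alpha_1'}(t,x)=v^{\alpha_2'}(t,x)$.

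I expect the main obstacle to be precisely this transfer step. The optimization in Algorithm~\ref{alg:on-policy-FBSDE} only controls $z^\alpha$ along the \emph{input} process $X^\alpha$, whereas the conclusion concerns the trajectories generated by the \emph{output} policies; Lemma~\ref{lemma:change-of-policy} is the device that bridges this gap. The remaining delicate point is arguing that agreement of the two drifts merely \emph{along} the single path $X^{\alpha_1'}$ --- not as functions on the whole state space --- already suffices to identify $X^{\alpha_1'}$ as a solution of the $\alpha_2'$-SDE and thereby to invoke pathwise uniqueness.
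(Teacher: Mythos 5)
Your proposal is correct and follows essentially the same route as the paper: admissibility via Remark~\ref{remark:chain-admissibility}, transfer of the identity $h(s,X_s^\alpha)=0$ to the output trajectory via Lemma~\ref{lemma:change-of-policy}, equality of the two policies (hence drifts) along $X^{\alpha_1'}$, and identification of the two state processes. The only cosmetic difference is at the final step, where you invoke pathwise uniqueness for the Lipschitz SDE as a black box, whereas the paper carries out that uniqueness argument explicitly via a Gr\"onwall estimate on $\phi(u)=\operatorname{\mathbb{E}}\int_t^{t+u}\|X^{\alpha_1'}_\tau-X^{\alpha_2'}_\tau\|^2\,d\tau$ --- these are the same argument in substance.
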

\begin{proof}
  Let $v^\alpha$ and $z^\alpha$ denote the same objects in Algorithm \ref{alg:PDE-based-subroutine} and Algorithm~\ref{alg:on-policy-FBSDE},  repsectively. We write down the explicit expression of $\alpha'_1,\alpha'_2$:
  \begin{equation*}
    \alpha'_1(\cdot,\cdot) = \mu(\cdot,\cdot,\sigma^\intercal\partial_xv^\alpha(\cdot,\cdot)),~~\alpha'_2(\cdot,\cdot) = \mu(\cdot,\cdot,z^\alpha(\cdot,\cdot)),
  \end{equation*}
and denote by $h(\cdot,\cdot)=\|z^\alpha(\cdot,\cdot) - \sigma^\intercal\partial_xv^\alpha(\cdot,\cdot)\|^2$. 

  According to Remark~\ref{remark:chain-admissibility}, $\alpha'_1,\alpha'_2$ are admissible. Consider the forward SDEs satisfied by $X^{\alpha'_1},X^{\alpha'_2}$:
\begin{equation*}
  \begin{aligned}
    X^{\alpha'_1}_s &= x + \int_t^sb^{\alpha'_1}(\tau,X^{\alpha'_1}_\tau)\,d\tau + \int_t^s\sigma(\tau,X^{\alpha'_1}_\tau)d W_s,\\
    X^{\alpha'_2}_s &= x + \int_t^sb^{\alpha'_2}(\tau,X^{\alpha'_2}_\tau)\,d\tau + \int_t^s\sigma(\tau,X^{\alpha'_2}_\tau)d W_s.
  \end{aligned}
\end{equation*}
  We claim that 
\begin{equation}
  \label{eq:tmp-378}
\alpha'_1(s,X_s^{\alpha'_1})=\alpha'_2(s,X_s^{\alpha'_1}),\quad ds\otimes d\mathbb{P}\text{-a.e. on }[t,T].
\end{equation}
Indeed, it can be concluded from  Lemma~\ref{lemma:value-function-characterization} that $h(s,X_s^\alpha)=0$ almost everywhere on $[t,T]\times\Omega$. Then, applying Lemma~\ref{lemma:change-of-policy} yields $h(s,X_s^{\alpha'_1})=0$ almost everywhere. Denote by 
$$\widetilde{X}_s^{\alpha_1'} = x + \int_t^sb^{\alpha'_2}(\tau,X^{\alpha'_1}_\tau)\,d\tau + \int_t^s\sigma(\tau,X^{\alpha'_1}_\tau)d W_s$$
and $\phi(u)\coloneqq \operatorname{\mathbb{E}}\int_t^{t+u}\|X^{\alpha'_1}_\tau - X^{\alpha'_2}_\tau\|^2\,d\tau$. Noting Eq.~\eqref{eq:tmp-378} and the Lipschitz continuity of $b^{\alpha'_2}$ and $\sigma$, we have
\begin{align*}
\phi(u) &= \operatorname{\mathbb{E}}\int_t^{t+u}\|\widetilde{X}^{\alpha'_1}_s - X^{\alpha'_2}_s\|^2\,ds\\
&\leq \operatorname{\mathbb{E}} \int_t^{t+u}\biggl\{  2\biggl[ \int_t^s(b^{\alpha'_2}(\tau,X_\tau^{\alpha'_1}) - b^{\alpha'_2}(\tau,X_\tau^{\alpha'_2}) )\,d\tau\biggr]^2 \\
& \hphantom{\,\leq \operatorname{\mathbb{E}}} + 2 \biggl[ \int_t^s(\sigma(\tau,X_\tau^{\alpha'_1}) - \sigma(\tau,X_\tau^{\alpha'_2}) )\,dW_\tau\biggr]^2  \biggr\}\,ds \\
&\leq \operatorname{\mathbb{E}}\int_t^{t+u} 2(s-t+1)L^2\int_t^s\|X_\tau^{\alpha'_1} - X_\tau^{\alpha'_2}\|^2\,d\tau \,ds \\
&\leq 2(T+1)L^2\int_0^u\phi(s)\,ds,\quad \forall u\in[0,T-t].
\end{align*}
Hence, by Gr\"onwall's inequality, there is $\phi(T-t)=0$. This proves that $X_s^{\alpha'_1}=X_s^{\alpha'_2}$ almost everywhere on $[t,T]\times\Omega$. Moreover, the cost of $\alpha'_1$ and $\alpha'_2$ at $(t,x)$ is equal.
\end{proof}
\begin{remark}
  This result reveals that there is no difference between the cost sequence produced by GPI using the PDE-based subroutine and the on-policy subroutine. Thus, all the convergence properties of the standard PI is preserved in our probabilistic framework.
\end{remark}

\begin{corollary}
  \label{corollary:FBSDE-convergence}
  For any fixed $(t,x)\in[0,T]\times\mathbb{R}^n$, the conclusions of Proposition~\ref{prop:PDE-convergence} hold if Algorithm~\ref{alg:PDE-based-subroutine} is replaced  by Algorithm~\ref{alg:on-policy-FBSDE}.
\end{corollary}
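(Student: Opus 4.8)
The plan is to run the on-policy iteration while comparing each of its steps against a single genuine Howard improvement, invoking Theorem~\ref{theorem:equivalence-PDE-FBSDE} only to transfer value information at the anchored point $(t,x)$. Write $\{\gamma^n\}$ for the sequence generated by GPI equipped with Algorithm~\ref{alg:on-policy-FBSDE}, so that $\gamma^0=\alpha^0$ and $\gamma^{n+1}$ is the output of Algorithm~\ref{alg:on-policy-FBSDE} on input $\gamma^n$. First I would settle admissibility by induction: $\gamma^0\in C_b^{\mathrm{UniLip}}$ is valued in $A$ by hypothesis, and if $\gamma^n$ is admissible then the optimizer satisfies $z^{\gamma^n}\in C_b^{\mathrm{UniLip}}$ while $\mu\in C^{\mathrm{UniLip}}$, so by the reasoning of Remark~\ref{remark:chain-admissibility} the output $\gamma^{n+1}=\mu(\cdot,\cdot,z^{\gamma^n}(\cdot,\cdot))$ is again a policy valued in $A$ lying in the class to which Theorem~\ref{theorem:equivalence-PDE-FBSDE} applies. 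This justifies applying the value-function machinery at every step.

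The crucial observation is that one should \emph{not} try to identify $\gamma^n$ with the PDE-generated policy $\alpha^n$ of Proposition~\ref{prop:PDE-convergence}: the two agree only almost everywhere along the trajectory from $(t,x)$ and may differ elsewhere, so their value functions, and hence the gradients $\sigma^\intercal\partial_x v^{\gamma^n}$ entering the next improvement, need not coincide off that trajectory. A naive ``the trajectories coincide at every step'' induction therefore does not close. Instead, at each step I would introduce $\beta^{n+1}$, the output of Algorithm~\ref{alg:PDE-based-subroutine} applied to the \emph{same} input $\gamma^n$, i.e.\ the exact Howard improvement of $\gamma^n$. Since $\gamma^n$ is admissible, Theorem~\ref{theorem:equivalence-PDE-FBSDE} applies to this common input and yields the single-point identity $v^{\gamma^{n+1}}(t,x)=v^{\beta^{n+1}}(t,x)$.

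It then remains to feed this identity into the one-step estimates underlying Proposition~\ref{prop:PDE-convergence}. Inspecting its proof (equivalently that of \cite[Theorem~4.1]{Kerimkulov2020}), the monotonicity and the linear rate rest on two facts valid for an \emph{arbitrary} admissible policy $\alpha$ with Howard improvement $\alpha^{\mathrm{H}}$ produced by Algorithm~\ref{alg:PDE-based-subroutine}: the pointwise monotonicity $v^{\alpha^{\mathrm{H}}}\le v^{\alpha}$ and the one-step contraction $v^{\alpha^{\mathrm{H}}}(t,x)-v^*(t,x)\le q\,\bigl(v^{\alpha}(t,x)-v^*(t,x)\bigr)$ with $q\in(0,1)$ independent of $(t,x)$. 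Applying these with $\alpha=\gamma^n$ and $\alpha^{\mathrm{H}}=\beta^{n+1}$, and substituting $v^{\gamma^{n+1}}(t,x)=v^{\beta^{n+1}}(t,x)$, gives $v^{\gamma^{n+1}}(t,x)\le v^{\gamma^n}(t,x)$ together with $v^{\gamma^{n+1}}(t,x)-v^*(t,x)\le q\,\bigl(v^{\gamma^n}(t,x)-v^*(t,x)\bigr)$. Since $v^*(t,x)$ is a lower bound for every $v^{\gamma^n}(t,x)$, iterating the contraction from $\gamma^0=\alpha^0$ delivers the bound with $C(t,x)=v^{\alpha^0}(t,x)-v^*(t,x)$, and the monotone convergence $v^{\gamma^n}(t,x)\downarrow v^*(t,x)$ follows.

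The main obstacle is precisely this re-anchoring. Because Theorem~\ref{theorem:equivalence-PDE-FBSDE} certifies only equality of values at the single point $(t,x)$, I cannot propagate any policy-level or function-level identity across iterations and must re-derive the comparison policy $\beta^{n+1}$ afresh at each step from the current iterate $\gamma^n$. This forces the argument to depend on the one-step improvement estimates holding for \emph{every} admissible policy, not merely along the particular PDE-generated sequence. Confirming that the proof of Proposition~\ref{prop:PDE-convergence} indeed supplies such a uniform one-step contraction, with a modulus $q$ independent of both $(t,x)$ and the policy being improved, is the point that genuinely needs care.
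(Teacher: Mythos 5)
Your anchoring strategy is sound as far as it goes: applying Theorem~\ref{theorem:equivalence-PDE-FBSDE} per step to the \emph{common} input $\gamma^n$ and its exact Howard improvement $\beta^{n+1}$ legitimately yields $v^{\gamma^{n+1}}(t,x)=v^{\beta^{n+1}}(t,x)\le v^{\gamma^n}(t,x)$, so monotone decrease of the cost sequence at the anchored point follows. The genuine gap is exactly the point you flagged at the end and then assumed away: the uniform one-step contraction \emph{in values}, $v^{\beta^{n+1}}(t,x)-v^*(t,x)\le q\bigl(v^{\gamma^n}(t,x)-v^*(t,x)\bigr)$, is not supplied by Proposition~\ref{prop:PDE-convergence}, by the cited proof of Kerimkulov \textit{et al}., or anywhere else in the paper. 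What that machinery actually provides (see Eq.~\eqref{eq:tmp-801}--\eqref{eq:tmp-841} in the proof of Theorem~\ref{theorem:error-offpolicy}, specialized to $c_n=0$) is a contraction of the form $a_n+b_n\le q\,b_{n-1}$, where $a_n$ is the squared value error at $(t,x)$ but $b_n$ is the \emph{integrated gradient error} $\operatorname{\mathbb{E}}\int_t^T e^{\gamma(s-t)}\|\sigma^\intercal\partial_x v^{\gamma^n}(s,X_s)-Z^*_s\|^2\,ds$ along a fixed forward trajectory. The one-step estimate bounds the new value error by the old \emph{gradient} error; no estimate in the paper bounds a policy's gradient error by its value error, so a value-to-value contraction cannot be extracted, and iterating it---the step your rate bound and your choice $C(t,x)=v^{\alpha^0}(t,x)-v^*(t,x)$ rest on---is unjustified.

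The repair is to change the quantity being contracted: track $b_n$ rather than $a_n$, along one fixed reference trajectory (e.g.\ $X^{\alpha^0}$). Exact on-policy optimization gives $z^{\gamma^n}=\sigma^\intercal\partial_x v^{\gamma^n}$ almost everywhere along $X^{\gamma^n}$, and Lemma~\ref{lemma:change-of-policy} transfers this identity to the fixed trajectory, so the term $c_n$ in the proof of Theorem~\ref{theorem:error-offpolicy} vanishes and the chain $a_n+b_n\le q\,b_{n-1}\le q^n b_0$ closes, yielding Eq.~\eqref{eq:PDE-convergence} for the on-policy sequence; this is precisely the route indicated in Remark~\ref{remark:error-offpolicy-proof-proposition-1}. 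For contrast, the paper itself treats the corollary as immediate: it reads Theorem~\ref{theorem:equivalence-PDE-FBSDE} as saying the PDE-based and FBSDE-based GPIs produce identical cost sequences at $(t,x)$ and then invokes Proposition~\ref{prop:PDE-convergence}, without addressing the cross-iteration chaining issue (inputs differing off the trajectory after the first step) that you correctly identified; so your skepticism about the naive induction is well placed, but your substitute for it does not close either.
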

\begin{remark}
  Because the output of Algorithm~\ref{alg:on-policy-FBSDE} may depend on the argument $(t,x)$, we cannot make a conclusion that $\{v^{\alpha^n}(t',x')\}$ is monotone at any $(t',x')$ as in Proposition~\ref{prop:PDE-convergence}. Nevertheless, the cost sequence $\{v^{\alpha^n}(t,x)\}$ is still monotonically decreasing, where $(t,x)$ is the argument passed into Algorithm~\ref{alg:on-policy-FBSDE}.
\end{remark}

\subsection{The off-policy subroutine}\label{sec:BSDE-subroutine}
On-policy and off-policy are terminologies in reinforcement learning \cite{Sutton2018}. Roughly speaking, on-policy and off-policy algorithms are both data-driven but different in the way of collecting data. In an on-policy algorithm, a value function of a policy $\alpha$ is evaluated with data collected by itself. This corresponds to FBSDE \eqref{eq:FBSDE-characterization}, where the forward SDE is driven by $\alpha$ and the solution to the backward SDE is related to $v^\alpha$ too. However, in an off-policy algorithm, the value function $v^\alpha$ is generally evaluated with data collected by a different policy, called the behavior policy $\alpha^b$ usually. The advantage of off-policy algorithms is the high data efficient. If we adopt the on-policy subroutine Algorithm~\ref{alg:on-policy-FBSDE} in GPI, then the current policy $\alpha$ generally changes during the iteration. Therefore, we have to resample data at the beginning of each iteration, i.e., solving a new forward SDE in our case. On the other hand, if we adopt the off-policy technique, then we can evaluate the value function of the new policy with pre-collected data. This in turn improves the data efficiency. Also, we expect that the variance effect would be reduced because all data are collected by the same policy.

With the help of nonlinear Feynman-Kac's formula, it is straightforward to extend the on-policy FBSDE characterization of value function to the off-policy case.

\begin{lemma}
  \label{lemma:off-policy-characterization}
  Let the condition of Lemma~\ref{lemma:value-function-characterization} hold and use the same notation. For any policy $\alpha^b\in C_b^{\mathrm{UniLip}}([0,T]\times\mathbb{R}^n)$ valued in $A$, the value function $v^\alpha$ admits the following stochastic representation:
\begin{equation}
  \label{eq:off-policy-characterization}
  \left\{
  \begin{aligned}
    X^b_s &= x + \int_t^sb^{\alpha^b}(\tau,X^b_\tau)\,d\tau + \int_t^s\sigma(\tau,X^b_\tau)\,dW_\tau,\\
    Y_s &= g(X^b_T) + \int_s^T f^\alpha(\tau,X^b_\tau)\,d\tau - \int_s^T\langle Z_\tau,dW_\tau\rangle\\
    & \hphantom{\,=} + \int_s^T\langle \hat{b}^{\alpha}(\tau,X^b_\tau)-\hat{b}^{\alpha^b}(\tau,X^b_\tau), Z_\tau\rangle\,d\tau, \\
    Y_s &= v^\alpha(s,X^b_s),\quad \forall s\in[t,T],\quad d\mathbb{P}\text{-a.s.}, \\
    Z_s &= \sigma^\intercal\partial_xv^\alpha(s,X^b_s),\quad ds\otimes d\mathbb{P}\text{-a.e. on } [t,T]\times\Omega.
  \end{aligned}
\right.
\end{equation}
\end{lemma}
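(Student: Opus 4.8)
The plan is to read the representation straight off Itô's formula applied to $v^\alpha(s,X^b_s)$ along the behavior trajectory, using the linear PDE~\eqref{eq:PDE-characterization} to turn the drift into the backward driver. By Lemma~\ref{lemma:value-function-characterization} we already know $v^\alpha\in C^{1,2}$ solves Eq.~\eqref{eq:PDE-characterization}, and by Remark~\ref{remark:admissibility} the behavior process $X^b$ driven by $\alpha^b\in C_b^{\mathrm{UniLip}}$ exists, is admissible, and has finite moments of every order. Hence Itô's rule applies and gives $dv^\alpha(s,X^b_s)=\mathscr{L}^{\alpha^b}v^\alpha(s,X^b_s)\,ds+\langle\sigma^\intercal\partial_xv^\alpha(s,X^b_s),dW_s\rangle$, where the generator carries the \emph{behavior} drift $b^{\alpha^b}$ rather than $b^\alpha$. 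This mismatch between the drift in the dynamics and the drift in the PDE is exactly what produces the extra integral term in Eq.~\eqref{eq:off-policy-characterization}.

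First I would set $Y_s\coloneqq v^\alpha(s,X^b_s)$ and $Z_s\coloneqq\sigma^\intercal\partial_xv^\alpha(s,X^b_s)$, which immediately yields the two pointwise identities in the last two lines of Eq.~\eqref{eq:off-policy-characterization}, together with the terminal value $Y_T=v^\alpha(T,X^b_T)=g(X^b_T)$. The crux is then rewriting $\mathscr{L}^{\alpha^b}v^\alpha$. I would split $\mathscr{L}^{\alpha^b}v^\alpha=\mathscr{L}^{\alpha}v^\alpha+\langle b^{\alpha^b}-b^\alpha,\partial_xv^\alpha\rangle$ and invoke the decomposition $b=\bar b+\sigma\hat b$, so that the control-free part $\bar b$ cancels and $b^{\alpha^b}-b^\alpha=\sigma(\hat b^{\alpha^b}-\hat b^{\alpha})$. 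The coupling of the control term with $\sigma$ is what lets me pull $\sigma$ onto $\partial_xv^\alpha$: indeed $\langle b^{\alpha^b}-b^\alpha,\partial_xv^\alpha\rangle=\langle\hat b^{\alpha^b}-\hat b^{\alpha},\sigma^\intercal\partial_xv^\alpha\rangle=\langle\hat b^{\alpha^b}-\hat b^{\alpha},Z\rangle$. Using Eq.~\eqref{eq:PDE-characterization} in the form $\mathscr{L}^\alpha v^\alpha=-f^\alpha$, the drift collapses to $-f^\alpha(s,X^b_s)-\langle\hat b^{\alpha}-\hat b^{\alpha^b},Z_s\rangle$ evaluated along $X^b$.

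Next I would integrate $dY_s$ from $s$ to $T$, substitute $Y_T=g(X^b_T)$, and move the stochastic integral and the two Lebesgue integrals to the right-hand side. The rearrangement flips the overall sign of the drift, turning $-\langle\hat b^{\alpha}-\hat b^{\alpha^b},Z\rangle$ into $+\langle\hat b^{\alpha}-\hat b^{\alpha^b},Z\rangle$, which is precisely the additional drift appearing in Eq.~\eqref{eq:off-policy-characterization}. This reproduces the backward equation line for line, so no further manipulation is needed beyond the bookkeeping.

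I do not expect a serious obstacle in the computation itself; the one genuinely technical point is to confirm that $(Y,Z)$ is an admissible solution pair, i.e. $Y\in\mathbb{S}^2$ and $Z\in\mathbb{H}^2$, so that the It\^o integral is a true martingale and $(Y,Z)$ is the (unique) solution of the stated BSDE rather than merely a formal expansion. This follows from the growth bounds in Assumption~\ref{assumption:1}: since $\partial_xv^\alpha\in C^{\mathrm{UniLip}}_b$ and $\sigma$ is bounded, $Z$ grows at most linearly in $X^b$, while the cost-functional definition of $v^\alpha$ together with the linear growth of $f$ and $g$ forces $Y$ to grow at most linearly in $X^b$; combined with the finite moments of $X^b$ from Remark~\ref{remark:admissibility}, both integrability requirements hold. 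Uniqueness then follows from standard BSDE theory, as the driver $z\mapsto f^\alpha(\tau,X^b_\tau)+\langle\hat b^\alpha(\tau,X^b_\tau)-\hat b^{\alpha^b}(\tau,X^b_\tau),z\rangle$ is uniformly Lipschitz in $z$ by the boundedness of $\hat b$.
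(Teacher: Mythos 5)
Your proposal is correct and follows essentially the same route as the paper: both hinge on the identity $\mathscr{L}^{\alpha}v^\alpha-\mathscr{L}^{\alpha^b}v^\alpha=\langle \hat b^{\alpha}-\hat b^{\alpha^b},\sigma^\intercal\partial_xv^\alpha\rangle$, available thanks to the decomposition $b=\bar b+\sigma\hat b$, so that PDE~\eqref{eq:PDE-characterization} becomes a semilinear equation with generator $\mathscr{L}^{\alpha^b}$ and a driver linear in $Z$. The paper then concludes by citing the nonlinear Feynman--Kac formula \cite[Theorem~7.4.5]{yong_zhou_1999}, whereas you unfold that citation into the explicit It\^o computation plus the $\mathbb{S}^2\times\mathbb{H}^2$ integrability and uniqueness checks; this is the same argument with the black box opened.
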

\begin{proof}
  By the definitions of $\bar{b}, \hat{b}$ and $\mu$, we can rewrite the PDE satisfied by $v^\alpha$ as follows:
\begin{equation*}
  \left\{
  \begin{aligned}
  &0 = \langle \hat{b}^\alpha(t,x)-\hat{b}^{\alpha^b}(t,x),\sigma^\intercal\partial_xv^\alpha(t,x)\rangle \\
    &\hphantom{0 =,} + \mathscr{L}^{\alpha^b}v^\alpha(t,x) + f^\alpha(t,x) ,\quad \forall (t,x)\in[0,T)\times\mathbb{R}^n,\\
    &v^\alpha(T,x) = g(x),\quad\forall x\in\mathbb{R}^n.
  \end{aligned}
\right.
\end{equation*}
Applying the nonlinear Feynman-Kac's formula\cite[Theorem~7.4.5]{yong_zhou_1999} to this leads to the desired representation. 
\end{proof}
\begin{remark}
  If $\alpha^b\equiv \alpha$, this degenerates to Lemma~\ref{lemma:value-function-characterization}. It is important to note that the forward process $X^b$ is independent of $\alpha$, which is the key difference between the on-policy and off-policy methods. GPI equipped with the off-policy subroutine and a fixed $\alpha^b$ should be viewed as an iteration of BSDEs, while that equipped with the on-policy subroutine should be viewed as an iteration of FBSDEs.
\end{remark}

Based on Lemma~\ref{lemma:off-policy-characterization}, we propose Algorithm~\ref{alg:off-policy-BSDE}, in which the optimization problem is modified according to Eq.~\eqref{eq:off-policy-characterization}. It can be concluded from Lemma~\ref{lemma:change-of-policy} that the optimization problems in Algorithm~\ref{alg:on-policy-FBSDE} and Algorithm~\ref{alg:off-policy-BSDE} have the same solutions. In light of this observation, we are able to prove that the returned polices of on-policy and off-policy subroutines are equivalent.

\begin{algorithm}
\caption{The off-policy subroutine of GPI.\label{alg:off-policy-BSDE}}
\begin{algorithmic}[1]
\REQUIRE{policies $\alpha,\alpha^b$; an initial condition $(t,x)$.}
\ENSURE{a policy $\alpha'$ not worse than $\alpha$.}
\STATE
Find the solution $X^{b}$ to the forward SDE~\eqref{eq:general-SDE} with $\alpha\leftarrow\alpha^b$.
\STATE
Find an optimal solution $z^\alpha$ to the optimization problem
\begin{equation}
  \label{eq:off-policy-op}
  \min_{z\in C^{\mathrm{UniLip}}_b} \epsilon^\alpha\coloneqq  \operatorname{\mathbb{E}}\int_t^T\|z(s,X^b_s) - Z^{\alpha,b}_s\|^2\,ds,
\end{equation}
where $Z^{\alpha,b}$ is a part of the solution to the following BSDE
\begin{align*}
    Y_s^{\alpha,b} &= g(X^b_T) + \int_s^T f^\alpha(\tau,X^b_\tau)\,d\tau - \int_s^T\langle Z^{\alpha,b}_\tau,dW_\tau\rangle\\
    & \hphantom{\,=} + \int_s^T\langle \hat{b}^{\alpha}(\tau,X^b_\tau)-\hat{b}^{\alpha^b}(\tau,X^b_\tau), Z^{\alpha,b}_\tau\rangle\,d\tau.
\end{align*}
\STATE
Construct the output policy by Eq.~\eqref{eq:improved-policy} with $z\leftarrow z^\alpha$.
\end{algorithmic}
\end{algorithm}

\begin{theorem}
  \label{theorem:equivalence-on-policy-off-policy}
Let the condition of Theorem~\ref{theorem:equivalence-PDE-FBSDE} hold and use the same notation. If $\alpha^b\in C_b^{\mathrm{UniLip}}([0,T]\times\mathbb{R}^n)$ is valued in $A$, then the output policies of Algorithm~\ref{alg:on-policy-FBSDE}, ~\ref{alg:off-policy-BSDE}, denoted by $\alpha'_2, \alpha'_3$, generate the ``same'' trajectory starting at $(t,x)$:
  \begin{equation*}
    X_s^{\alpha'_{2},t,x} = X_s^{\alpha'_{3},t,x},\quad ds\otimes d\mathbb{P}\text{-a.e. on }[t,T]\times\Omega.
  \end{equation*}
Moreover, $v^{\alpha'_{2}}(t,x) = v^{\alpha'_{3}}(t,x)$.
\end{theorem}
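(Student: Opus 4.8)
The plan is to follow the template of the proof of Theorem~\ref{theorem:equivalence-PDE-FBSDE}, using the gradient term $\sigma^\intercal\partial_xv^\alpha$ as a common reference against which both optimization solutions are compared. Write $z^\alpha$ for the minimizer of the on-policy problem~\eqref{eq:on-policy-op} (so that $\alpha'_2=\mu(\cdot,\cdot,z^\alpha)$) and $\tilde z^\alpha$ for the minimizer of the off-policy problem~\eqref{eq:off-policy-op} (so that $\alpha'_3=\mu(\cdot,\cdot,\tilde z^\alpha)$). First I would argue that both optimization problems attain the minimum value zero at $z=\sigma^\intercal\partial_xv^\alpha$. For the on-policy problem this is precisely the observation recorded after Algorithm~\ref{alg:on-policy-FBSDE} via Lemma~\ref{lemma:value-function-characterization}; for the off-policy problem, Lemma~\ref{lemma:off-policy-characterization} gives $Z_s^{\alpha,b}=\sigma^\intercal\partial_xv^\alpha(s,X^b_s)$ a.e., so substituting $z=\sigma^\intercal\partial_xv^\alpha$ (which lies in $C^{\mathrm{UniLip}}_b$ by Remark~\ref{remark:chain-admissibility}) makes the objective vanish. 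Consequently $\|z^\alpha-\sigma^\intercal\partial_xv^\alpha\|^2=0$ holds $ds\otimes d\mathbb{P}$-a.e. along $X^\alpha$, while $\|\tilde z^\alpha-\sigma^\intercal\partial_xv^\alpha\|^2=0$ holds a.e. along $X^b$.

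The delicate point is that these two identities live along different driving processes, $X^\alpha$ and $X^b$, and so cannot be compared directly. The bridge is Lemma~\ref{lemma:change-of-policy}: since $\alpha'_2$ is admissible and lies in $C_b^{\mathrm{UniLip}}$ (Remark~\ref{remark:chain-admissibility}), applying the lemma to each of the two nonnegative functions $\|z^\alpha-\sigma^\intercal\partial_xv^\alpha\|^2$ and $\|\tilde z^\alpha-\sigma^\intercal\partial_xv^\alpha\|^2$ transfers both identities onto the single process $X^{\alpha'_2}$. By the triangle inequality $\|z^\alpha-\tilde z^\alpha\|^2\le 2\|z^\alpha-\sigma^\intercal\partial_xv^\alpha\|^2+2\|\tilde z^\alpha-\sigma^\intercal\partial_xv^\alpha\|^2$, so $z^\alpha(s,X^{\alpha'_2}_s)=\tilde z^\alpha(s,X^{\alpha'_2}_s)$ a.e., whence $\alpha'_2(s,X^{\alpha'_2}_s)=\mu\bigl(s,X^{\alpha'_2}_s,z^\alpha(s,X^{\alpha'_2}_s)\bigr)=\mu\bigl(s,X^{\alpha'_2}_s,\tilde z^\alpha(s,X^{\alpha'_2}_s)\bigr)=\alpha'_3(s,X^{\alpha'_2}_s)$ holds $ds\otimes d\mathbb{P}$-a.e. on $[t,T]\times\Omega$.

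With this coincidence of the two feedback laws along $X^{\alpha'_2}$ in hand, the remainder is identical to the end of the proof of Theorem~\ref{theorem:equivalence-PDE-FBSDE}: introduce the auxiliary process obtained by replacing the drift in the $X^{\alpha'_2}$-equation by $b^{\alpha'_3}$ while keeping the same states and stochastic integral, bound $\phi(u)=\operatorname{\mathbb{E}}\int_t^{t+u}\|X^{\alpha'_2}_\tau-X^{\alpha'_3}_\tau\|^2\,d\tau$ using the uniform Lipschitz continuity of $b^{\alpha'_3}$ and $\sigma$, and invoke Gr\"onwall's inequality to get $\phi(T-t)=0$, i.e. $X^{\alpha'_2}_s=X^{\alpha'_3}_s$ a.e. on $[t,T]\times\Omega$. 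Equal trajectories then give $v^{\alpha'_2}(t,x)=v^{\alpha'_3}(t,x)$ directly from the cost functional~\eqref{eq:def-valuefunction}.

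I expect the only genuinely nontrivial step to be the bridging in the second paragraph, namely reconciling two a.e.\ characterizations that a priori hold along different processes; everything hinges on applying Lemma~\ref{lemma:change-of-policy} to move both onto $X^{\alpha'_2}$, which requires checking that $\alpha,\alpha^b,\alpha'_2$ are all $A$-valued and in $C_b^{\mathrm{UniLip}}$ so the lemma's hypotheses are met. The Gr\"onwall computation is then routine and imported verbatim. As an even shorter alternative, I could instead prove that Algorithm~\ref{alg:off-policy-BSDE} is equivalent to the PDE-based Algorithm~\ref{alg:PDE-based-subroutine} by repeating the proof of Theorem~\ref{theorem:equivalence-PDE-FBSDE} with Lemma~\ref{lemma:off-policy-characterization} replacing Lemma~\ref{lemma:value-function-characterization}, and then combine with Theorem~\ref{theorem:equivalence-PDE-FBSDE} by transitivity of a.e.\ equality to obtain the same conclusion.
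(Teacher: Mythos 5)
Your proposal is correct and follows essentially the same route as the paper: both minimizers are identified with $\sigma^\intercal\partial_xv^\alpha$ via Lemma~\ref{lemma:value-function-characterization} and Lemma~\ref{lemma:off-policy-characterization}, both identities are transported onto the single process $X^{\alpha'_2}$ via Lemma~\ref{lemma:change-of-policy}, and the Gr\"onwall argument from Theorem~\ref{theorem:equivalence-PDE-FBSDE} closes the proof. The paper's version is merely terser (it cites Lemmas~\ref{lemma:value-function-characterization}--\ref{lemma:off-policy-characterization} and defers the rest to the earlier proof), so your write-up, including the explicit verification of the hypotheses of Lemma~\ref{lemma:change-of-policy}, is a faithful expansion of it.
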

\begin{proof}
  The proof is similar to the proof of Theorem~\ref{theorem:equivalence-PDE-FBSDE} except that we need to show
\begin{equation*}
    \alpha'_2(s,X_s^{\alpha'_2}) = \alpha'_3(s,X_s^{\alpha'_2}),\quad ds\otimes d\mathbb{P}\text{-a.e. on }[t,T]\times\Omega.
  \end{equation*}
Let $z^\alpha_i (i=2,3)$ be the term $z^\alpha$ in Algorithm~\ref{alg:on-policy-FBSDE} and Algorithm~\ref{alg:off-policy-BSDE}, respectively. Using Lemma~\ref{lemma:value-function-characterization}--\ref{lemma:off-policy-characterization}, we have 
\begin{equation*}
z_i^\alpha(s,X_s^{\alpha'_2}) = \sigma^\intercal\partial_xv^\alpha(s,X_s^{\alpha'_2}),\quad ds\otimes d\mathbb{P}\text{-a.e. on }[t,T]\times\Omega.
\end{equation*}
Substituting this into the definition of $\alpha'_i$ finishes our proof.
\end{proof}

In view of Theorem~\ref{theorem:equivalence-PDE-FBSDE} and Theorem~\ref{theorem:equivalence-on-policy-off-policy}, we conclude that  these three subroutines are equivalent to each other. Consequently, the following convergence result for Algorithm~\ref{alg:off-policy-BSDE} holds.

\begin{corollary}
  \label{corollary:BSDE-convergence}
  For any fixed $(t,x)\in[0,T]\times\mathbb{R}^n$ and $\alpha^b\in C^{\mathrm{UniLip}}([0,T]\times\mathbb{R}^n)$ valued in $A$, the conclusions of Proposition~\ref{prop:PDE-convergence} hold if Algorithm~\ref{alg:PDE-based-subroutine} is replaced by Algorithm~\ref{alg:off-policy-BSDE}.
\end{corollary}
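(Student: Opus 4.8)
The plan is to lean on the equivalence results already established, reducing the corollary to Proposition~\ref{prop:PDE-convergence} rather than re-deriving the rate from scratch. Fix the initial pair $(t,x)$ and let $\{\alpha^n\}_{n\in\mathbb{N}}$ denote the policy sequence produced by GPI equipped with Algorithm~\ref{alg:off-policy-BSDE}, started from an admissible $\alpha^0\in C^{\mathrm{UniLip}}_b$ valued in $A$. I would proceed in three steps: admissibility, monotonicity at $(t,x)$, and the geometric rate.

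First I would settle admissibility by induction. The output of Algorithm~\ref{alg:off-policy-BSDE} has the form $\mu(\cdot,\cdot,z^{\alpha})$, where the minimizer $z^\alpha$ ranges over $C^{\mathrm{UniLip}}_b$; since $\sigma^\intercal\partial_xv^{\alpha}\in C^{\mathrm{UniLip}}_b$ attains the zero minimum of Eq.~\eqref{eq:off-policy-op} by Lemma~\ref{lemma:off-policy-characterization}, the optimizer $z^\alpha$ lies in $C^{\mathrm{UniLip}}_b$, and Remark~\ref{remark:chain-admissibility} then guarantees that $\mu(\cdot,\cdot,z^\alpha)$ is an admissible policy in $C^{\mathrm{UniLip}}$ valued in $A$. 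Hence each $\alpha^n$ is admissible, so the whole recursion is well defined.

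Next, for monotonicity at $(t,x)$, I would combine Theorem~\ref{theorem:equivalence-PDE-FBSDE} and Theorem~\ref{theorem:equivalence-on-policy-off-policy}. Applied with the common input $\alpha^{n-1}$, the off-policy output $\alpha^n$ and the PDE-based output $\tilde\alpha^n\coloneqq\mu(\cdot,\cdot,\sigma^\intercal\partial_xv^{\alpha^{n-1}})$ satisfy $v^{\alpha^n}(t,x)=v^{\tilde\alpha^n}(t,x)$. Since $\tilde\alpha^n$ is exactly the Howard improvement of $\alpha^{n-1}$, the policy-improvement inequality underlying the monotonicity in Proposition~\ref{prop:PDE-convergence} yields $v^{\tilde\alpha^n}\le v^{\alpha^{n-1}}$ pointwise, so in particular $v^{\alpha^n}(t,x)\le v^{\alpha^{n-1}}(t,x)$; the lower bound $v^{\alpha^n}(t,x)\ge v^*(t,x)$ is immediate from the definition of $v^*$. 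Thus $\{v^{\alpha^n}(t,x)\}$ decreases to some limit no smaller than $v^*(t,x)$.

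Finally, the hard part — and the point flagged in the remark following Corollary~\ref{corollary:FBSDE-convergence} — is that $\{\alpha^n\}$ need not agree as functions with the standard PI sequence away from the trajectory through $(t,x)$, so I cannot simply transplant $v^{\alpha^n}\equiv v^{\tilde\alpha^n}$ globally and iterate. The crux is therefore to observe that the Kerimkulov-type estimate behind Eq.~\eqref{eq:PDE-convergence} is a Feynman--Kac comparison carried out entirely through expectations along controlled trajectories, and that along every admissible trajectory the off-policy improvement coincides with the exact Howard improvement $ds\otimes d\mathbb{P}$-a.e.\ by Lemma~\ref{lemma:change-of-policy}. Consequently every inequality in that proof that involves the improved policy is unchanged when $\tilde\alpha^n$ is replaced by $\alpha^n$, so the same one-step contraction $|v^{\alpha^n}(t,x)-v^*(t,x)|\le q\,|v^{\alpha^{n-1}}(t,x)-v^*(t,x)|$ persists and iterates to the bound with $C=C(t,x)$. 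Equivalently, and perhaps most cleanly, this is the $\epsilon_n\equiv0$ specialization of the robust estimate of Theorem~\ref{theorem:error-offpolicy} proved later, which directly closes the rate without tracking the off-trajectory discrepancy between the two policy sequences.
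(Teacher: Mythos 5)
Your proposal is correct, but it closes the argument by a genuinely different route than the paper. The paper offers no standalone proof of Corollary~\ref{corollary:BSDE-convergence}: immediately after Theorem~\ref{theorem:equivalence-on-policy-off-policy} it declares the three subroutines ``equivalent to each other'' and states the corollary as an immediate consequence, implicitly identifying, iteration by iteration, the cost of the off-policy GPI sequence at $(t,x)$ with that of the standard PI sequence. You use the equivalence theorems only for what they directly deliver for a \emph{common} input policy (admissibility and the one-step monotonicity $v^{\alpha^n}(t,x)\le v^{\alpha^{n-1}}(t,x)$), and you obtain the geometric rate instead from the $\epsilon_n\equiv 0$ specialization of Theorem~\ref{theorem:error-offpolicy}. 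This is arguably more careful than the paper's own treatment: it sidesteps exactly the difficulty you flag, namely that after one iteration the off-policy sequence and the PDE-based sequence agree only $ds\otimes d\mathbb{P}$-a.e.\ along trajectories through $(t,x)$ and not as functions, so the equivalence theorems cannot be reapplied verbatim at the next step without an extra argument (e.g., propagating a.e.\ equality of the input policies into equality of the BSDE solutions via uniqueness). The paper itself sanctions your shortcut: Remark~\ref{remark:error-offpolicy-proof-proposition-1} notes that with $\epsilon_n=0$ the recursion collapses to $a_n+b_n\le q^n b_0$, and since Theorem~\ref{theorem:error-offpolicy} is proved directly for Algorithm~\ref{alg:noised-off-PI} with an arbitrary fixed $\alpha^b$ and makes no use of this corollary, there is no circularity in invoking it even though it appears later in the text. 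One caveat: your ``route (a)'' is stated imprecisely. The Kerimkulov-type estimate does not give a one-step contraction of the value error into itself; the contraction acts on the gradient terms $b_n=\operatorname{\mathbb{E}}\int_t^T e^{\gamma(s-t)}\|Z^n_s-Z^*_s\|^2\,ds$ via $a_n+b_n\le q\,b_{n-1}$, and the value error $a_n=|v^{\alpha^n}(t,x)-v^*(t,x)|^2$ is only dominated by $q^n b_0$, yielding the rate $|v^{\alpha^n}(t,x)-v^*(t,x)|\le \sqrt{b_0}\,(\sqrt{q}\,)^n$, which is still of the form $C(t,x)q^n$ demanded by Eq.~\eqref{eq:PDE-convergence}. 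Since your ``route (b)'' supplies the correct mechanism, this imprecision does not affect the validity of the overall proof.
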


\subsection{A robust convergence result}\label{sec:robust-convergence-analysis}
Consider the optimization problem in Algorithm~\ref{alg:off-policy-BSDE}. In view of Lemma~\ref{lemma:off-policy-characterization}, there exists a $z^\alpha$ with $\epsilon^\alpha(z^\alpha)=0$. In practice, however, it is usually the case that we can only find a suboptimal solution $\hat{z}$ and thus $\epsilon^\alpha(\hat{z}) > 0$. If we construct a policy by Eq.~\eqref{eq:improved-policy} with $z\leftarrow \hat{z}$, then there is no guarantee that this new policy $\hat{\alpha}$ performs better than the current policy $\alpha$. To see this, we apply It\^o's formula to obtain (noting the PDEs satisfied by value functions)
\begin{align*}
&\hphantom{\,=\, }v^{\alpha}(t,x) - v^{\hat{\alpha}}(t,x) \\
&= \operatorname{\mathbb{E}} \int_t^T(\mathscr{L}^{\hat{\alpha}} v^{\hat{\alpha}} - \mathscr{L}^{\hat{\alpha}} v^{\alpha}) (s,X^{\hat{\alpha}}_s)\,ds \\
&= \operatorname{\mathbb{E}}\int_t^T\bigl(\mathscr{L}^{\alpha}v^{\alpha}+f^{\alpha} - \mathscr{L}^{\hat{\alpha}}v^{\alpha} - f^{\hat{\alpha}}\bigr)(s,X_s^{\hat{\alpha}})\,ds.
\end{align*}
If $\hat{z}=z^\alpha$, then $\hat{\alpha}(s,X_s^{\hat{\alpha}})=\mu(s,X_s^{\hat{\alpha}},\sigma^\intercal\partial_xv^\alpha(s,X_s^{\hat{\alpha}}))$ almost everywhere on $[t,T]\times\Omega$, and thus, $v^{\alpha}(t,x) - v^{\hat{\alpha}}(t,x)$ equals
$$\operatorname{\mathbb{E}}\int_t^T\bigl(\mathscr{L}^{\alpha}v^{\alpha}+f^{\alpha} - \inf_{a\in A}\{ \mathscr{L}^av^{\alpha} + f^a\}\bigr)(s,X_s^{\hat{\alpha}})\,ds \geq 0.$$
If $\epsilon^\alpha(\hat{z}) > 0$, then generally $v^\alpha(t,x)\geq v^{\hat{\alpha}}(t,x)$ does not hold, and thus the monotonicity of policy improvement is broken.

Below, we study the case in which the objective function in the off-policy subroutine does not reach zero during policy iterations. Though the cost sequence $\{v^{\alpha^n}(t,x)\}$ may be not monotone, we show that it still converges to the optimal cost if the $n$-th objective value $\epsilon_n$ converges to zero. To make it more clear, we spell the policy iteration procedure in Algorithm~\ref{alg:noised-off-PI}. In comparison to the GPI that is equipped with the off-policy subroutine, Algorithm~\ref{alg:noised-off-PI} contains two important differences. The first difference is that the behavior policy $\alpha^b$ is fixed during iteration. This is not the only way to apply the off-policy BSDE subroutine in GPI, as it can be proved that the cost of the output policy does not change if $\alpha^b$ is different. In order to view the whole algorithm as the iteration of BSDEs, however, we do not allow the forward SDE changes during iteration. The second difference is that $z^n$ is not necessarily an optimal solution of Eq.~\eqref{eq:off-policy-op}. Also, $\epsilon_n$ is not necessarily equal to 0.

\begin{algorithm}
\caption{A BSDE-based Policy Iteration Algorithm.\label{alg:noised-off-PI}}
\begin{algorithmic}[1]
\REQUIRE{policies $\alpha^0,\alpha^b$; an initial condition $(t,x)$.}
\ENSURE{a sequence of policies $\{\alpha^n\}$.}
\STATE
Find the solution $X^{b}$ to the forward SDE~\eqref{eq:general-SDE} with $\alpha\leftarrow\alpha^b$.
\FOR{$n=0,1,2,\ldots$}
\STATE
Run a numerical method to solve the optimization problem~\eqref{eq:off-policy-op} with $\alpha\leftarrow \alpha^n$. Denote by $z^n$ the returned solution and $\epsilon_n$ the associated objective value.
\STATE
Construct $\alpha^{n+1}$ by Eq.~\eqref{eq:improved-policy} with $z\leftarrow z^n$.
\ENDFOR
\end{algorithmic}
\end{algorithm}

With notations defined in Algorithm~\ref{alg:noised-off-PI}, we can state our robust convergence result as follows.
\begin{theorem}
  \label{theorem:error-offpolicy}
  Let Assumption~\ref{assumption:1} hold and use notations in   Algorithm~\ref{alg:noised-off-PI}. If $\alpha^0, \alpha^b\in C^{\mathrm{UniLip}}_b([0,T]\times\mathbb{R}^n)$ are policies valued in $A$, then $\alpha^n$ is admissible for any $n\geq 0$. Moreover, there exist constants $q\in(0,1)$ and $\gamma > 0$, both independent of $(t,x)$, such that the following inequality holds
\begin{equation*}
\operatorname*{limsup}_{n\to\infty}\bigl|v^{\alpha^n}(t,x) - v^*(t,x)\bigr|^2 \leq \frac{qe^{\gamma(T-t)}}{1-q}\cdot \operatorname*{limsup}_{n\to\infty}\epsilon_n.    
\end{equation*}
\end{theorem}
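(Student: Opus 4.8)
The plan is to carry the entire analysis along the \emph{fixed} behavior trajectory $X^b$, which is legitimate precisely because Algorithm~\ref{alg:noised-off-PI} freezes $\alpha^b$ (and hence $X^b$) across iterations, keeping everything on one probability space. First I would dispatch admissibility by induction: since each $z^n\in C^{\mathrm{UniLip}}_b$ and $\mu\in C^{\mathrm{UniLip}}$ is $A$-valued, the map $\alpha^{n+1}=\mu(\cdot,\cdot,z^n(\cdot,\cdot))$ lies in $C^{\mathrm{UniLip}}$ and is valued in $A$, so Remark~\ref{remark:admissibility} yields admissibility of every $\alpha^n$. I would then encode the two quantities of interest as backward components along $X^b$: by Lemma~\ref{lemma:off-policy-characterization}, $v^{\alpha^n}(t,x)=Y^n_t$ where $(Y^n,Z^n)$ solves the off-policy BSDE with generator $G^{\alpha^n}(s,z)=f^{\alpha^n}(X^b_s)+\langle\hat b^{\alpha^n}(X^b_s)-\hat b^{\alpha^b}(X^b_s),z\rangle$, while $v^*(t,x)=Y^*_t$ where $(Y^*,Z^*)$ solves the BSDE whose generator is the minimized Hamiltonian $G^*(s,z)=\inf_{a\in A}\{f^a(X^b_s)+\langle\hat b^a(X^b_s),z\rangle\}-\langle\hat b^{\alpha^b}(X^b_s),z\rangle$; both share the terminal datum $g(X^b_T)$. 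Here $Z^n_s=\sigma^\intercal\partial_xv^{\alpha^n}(s,X^b_s)$, so the optimization error is exactly $\epsilon_n=\operatorname{\mathbb{E}}\int_t^T\|z^n(s,X^b_s)-Z^n_s\|^2\,ds$.

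The heart of the argument is a perturbed one-step contraction
\begin{equation*}
  |v^{\alpha^{n+1}}(t,x)-v^*(t,x)|^2\leq q\,|v^{\alpha^n}(t,x)-v^*(t,x)|^2+qe^{\gamma(T-t)}\epsilon_n,
\end{equation*}
whose geometric and exponential factors already foreshadow the final bound. To obtain it I would apply the standard a priori estimate to the linear BSDE satisfied by the difference $Y^{n+1}-Y^*$, introducing the exponential weight $e^{\gamma(s-t)}$; this is the source of $e^{\gamma(T-t)}$ and fixes $\gamma$ in terms of the common Lipschitz constant $L$. The estimate controls $|v^{\alpha^{n+1}}(t,x)-v^*(t,x)|$ by a weighted integral of the nonnegative generator gap $G^{\alpha^{n+1}}(s,Z)-G^*(s,Z)=H(\alpha^{n+1},Z)-\inf_{a}H(a,Z)$, where $H(s,x,a,z)=\langle\hat b^a(s,x),z\rangle+f^a(s,x)$ and the $\hat b^{\alpha^b}$ terms cancel. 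Using that $\alpha^{n+1}=\mu(\cdot,\cdot,z^n)$ minimizes $H(\cdot,z^n)$, together with the uniform Lipschitz continuity of $\mu$ and $\hat b$ and the bound $\|\hat b\|\leq L$, a three-term splitting gives $H(\alpha^{n+1},Z)-\inf_aH(a,Z)\leq\langle\hat b^{\alpha^{n+1}}-\hat b^{\mu(Z)},Z-z^n\rangle\leq L^2\|z^n-Z\|^2$. Finally I would split $z^n-Z=(z^n-Z^n)+(Z^n-Z)$: the first piece integrates to $\epsilon_n$, while the second is the gradient gap $Z^n-Z^*=\sigma^\intercal\partial_x(v^{\alpha^n}-v^*)$, which must be tied back to the previous value gap $|v^{\alpha^n}(t,x)-v^*(t,x)|$ by the one-sided Newton-type monotonicity of the policy-improvement operator --- the very mechanism that produces the rate $q$ in Proposition~\ref{prop:PDE-convergence} and in \cite[Theorem~4.1]{Kerimkulov2020}. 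Since all coefficients are uniformly Lipschitz and bounded, $q$ and $\gamma$ come out independent of $(t,x)$.

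With the one-step estimate in hand the conclusion is routine: writing $a_n=|v^{\alpha^n}(t,x)-v^*(t,x)|^2$, unrolling the recursion gives $a_n\leq q^na_0+qe^{\gamma(T-t)}\sum_{k=0}^{n-1}q^{\,n-1-k}\epsilon_k$, and since $q^na_0\to0$ and a geometrically weighted average satisfies $\limsup_n\sum_{k=0}^{n-1}q^{\,n-1-k}\epsilon_k\leq(1-q)^{-1}\limsup_n\epsilon_n$, taking $\limsup$ yields the stated inequality.

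I expect the decisive obstacle to be the one-step contraction, and within it the step that converts the residual gradient gap $\operatorname{\mathbb{E}}\int_t^T\|Z^n_s-Z^*_s\|^2\,ds$ into a genuinely contractive multiple of the previous value gap with a factor $q<1$. This is the nonperturbative content of Proposition~\ref{prop:PDE-convergence}: it cannot be read off from generic BSDE stability, which only bounds the $Z$-difference by the supremum of the $Y$-difference and thus runs in the wrong direction, so it requires the convexity/strong-monotonicity structure of the minimized Hamiltonian that underlies Newton's method. The secondary difficulty is the bookkeeping of the exponential weight and of the reweighting between $\operatorname{\mathbb{P}}$ and the Girsanov-shifted measure, so that the optimization error enters precisely as $\epsilon_n$ \emph{along} $X^b$; here the freezing of $\alpha^b$ keeps all processes on one space and lets Lemma~\ref{lemma:change-of-policy}, or a direct Girsanov bound using $\|\hat b\|\leq L$, absorb the reweighting into $\gamma$.
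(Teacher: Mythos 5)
Your outer scaffolding coincides with the paper's (admissibility via Remark~\ref{remark:admissibility}, all processes carried along the frozen trajectory $X^b$ via Lemma~\ref{lemma:off-policy-characterization}, the split $z^n-Z^*=(z^n-Z^n)+(Z^n-Z^*)$, and the geometric-weighted limsup bookkeeping, which is correct as written). But the pivotal step is formulated in the wrong variables, and the step you yourself flag as ``the decisive obstacle'' is a genuine gap that cannot be closed along the lines you indicate. Your one-step contraction $a_{n+1}\le q\,a_n+qe^{\gamma(T-t)}\epsilon_n$, with $a_n=|v^{\alpha^n}(t,x)-v^*(t,x)|^2$, requires bounding the integrated gradient gap $b_n=\operatorname{\mathbb{E}}\int_t^Te^{\gamma(s-t)}\|Z^n_s-Z^*_s\|^2\,ds$ by a constant multiple of the scalar $a_n$, i.e.\ controlling a trajectory-integrated squared gradient difference by the squared value difference at the single point $(t,x)$. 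No such bound exists: since $v^{\alpha^n}\ge v^*$ pointwise, the two functions can touch at $(t,x)$ (so $a_n=0$) while their gradients differ along $X^b$ (so $b_n>0$). Moreover, invoking ``the mechanism that produces $q$ in Proposition~\ref{prop:PDE-convergence}'' does not help, because that mechanism is itself a contraction of gradient gaps, not of value gaps; see Remark~\ref{remark:error-offpolicy-proof-proposition-1}, where the exact-evaluation case reads $a_n+b_n\le q^nb_0$ --- the chain that contracts is $\{b_n\}$, and $a_n$ is merely read off from it.

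This is exactly how the paper escapes the obstacle: it never collapses the recursion to $a_n$ alone. After checking a \emph{two-sided linear} Lipschitz estimate for the generator $h(s,z,Z)=f^{\mu_s(z)}_s+\langle\hat b^{\mu_s(z)}_s-\hat b^{\alpha^b}_s,Z\rangle$ (using $\|Z^*_s\|\le K$, rather than your one-sided quadratic Hamiltonian bound, which would feed fourth moments into an $L^2$ a priori estimate), it applies \cite[Lemma~A.5]{Kerimkulov2020} to obtain Eq.~\eqref{eq:tmp-801}, i.e.\ the coupled estimate $a_n+b_n\le\tilde q\operatorname{\mathbb{E}}\int_t^Te^{\gamma(s-t)}\|z^{n-1}_s-Z^*_s\|^2\,ds\le q(b_{n-1}+c_{n-1})$ with $c_{n-1}\le e^{\gamma(T-t)}\epsilon_{n-1}$. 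In other words, one policy-iteration step is a contraction in the weighted $\mathbb{H}^2$ gradient space with fixed point $Z^*$, and the value gap rides along for free. If you replace your scalar contraction by this coupled recursion in $(a_n,b_n)$, your final unrolling and limsup argument goes through essentially verbatim and yields the stated constant $\frac{qe^{\gamma(T-t)}}{1-q}$.
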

\begin{proof}
Throughout this proof, we fix the forward state to $X^b$, and use $F_s$ to denote $F(s,X_s^b)$ for any function $F(\cdot,\cdot)$.

The admissibility is a direct consequence of Remark~\ref{remark:admissibility}. According to Lemma~\ref{lemma:off-policy-characterization}, for $n\geq 1$, we have
\begin{equation*}
Y_s^n = g(X_T^b) + \int_s^T\kern-0.5em f^{\alpha^{n}}_\tau\!\! + \! (\hat{b}^{\alpha^{n}}_\tau-\hat{b}^{\alpha^b}_\tau)^\intercal Z^n_\tau\,d\tau  - \int_s^T\kern-0.5em(Z^n_\tau)^\intercal dW_\tau,
\end{equation*}
where $Y_s^n=v^{\alpha^n}(s,X_s^b),\  Z_s^n=\sigma^\intercal\partial_xv^{\alpha^n}(s,X_s^b)$. Similarly,
\begin{equation*}
  \begin{aligned}
Y_s^* = g(X_T^b) + \int_s^T\kern-0.5em f^{\alpha^{*}}_\tau\!\! + \! (\hat{b}^{\alpha^{*}}_\tau-\hat{b}^{\alpha^b}_\tau)^\intercal Z^*_\tau\,d\tau  - \int_s^T\kern-0.5em(Z^*_\tau)^\intercal dW_\tau,
  \end{aligned}
\end{equation*}
where $Y^*_s=v^*(s,X_s^b),\ Z^*_s = \sigma^\intercal\partial_xv^*(s,X_s^b)$.

Define $h:\Omega\times[0,T]\times\mathbb{R}^d\times\mathbb{R}^d\to\mathbb{R}$ by
\begin{equation*}
  h(s,z,Z)\coloneqq f^{\mu_s(z)}_s + \langle \hat{b}^{\mu_s(z)}_s - \hat{b}^{\alpha^b}_s, Z\rangle.
\end{equation*}
Then, we can verify that under Assumption~\ref{assumption:1} there is a  constant $L$ such that for any $(s,z,Z)\in[t,T]\times\mathbb{R}^d\times\mathbb{R}^d$, 
\begin{equation*}
  |h(s,z,Z) - h(s,0,0)|\leq L\|z\| + L\|Z\|,\quad \mathbb{P}\text{-a.s.},
\end{equation*}
and that $\operatorname{\mathbb{E}}\int_t^T\|h(s,0,0)\|^2\,ds < \infty$. Moreover, it can be proved that $\|Z^*_s\| \leq L\|\partial_xv^*(s,X_s^b)\|$ can be further bounded by some constant $K$ \cite[Proposition~4.3.1]{yong_zhou_1999}; see also \cite[Chapter~4]{Krylov2008} for more general discussions on the properties of $\partial_xv^\alpha$. Hence, we have
\begin{equation*}
  \begin{aligned}
    & \hphantom{\leq \ } |h(s,z^{n-1}_s,Z^n_s) - h(s,Z^*_s,Z^*_s)| \\
 &\leq |f^{\mu_s(z^{n-1}_s)}_s - f^{\mu_s(Z^{*}_s)}_s| + |\langle \hat{b}^{\mu_s(z^{n-1}_s)}_s - \hat{b}^{\alpha^b}_s, Z^n_s - Z^*_s\rangle| \\
&\hphantom{\leq\ } + |\langle \hat{b}^{\mu_s(z^{n-1}_s)}_s - \hat{b}^{\mu_s(Z^*_s)}_s, Z^*_s\rangle|\\
&\leq L\|\mu_s(z^{n-1}_s) - \mu_s(Z^{*}_s)\| + 2L\|Z_s^n - Z_s^*\| \\
&\hphantom{\leq\ } + K\|\hat{b}^{\mu_s(z^{n-1}_s)}_s - \hat{b}^{\mu_s(Z^*_s)}_s\|\\
&= (L^2+KL)\|z^{n-1}_s-Z^*_s\| + 2L\|Z_s^n-Z_s^*\|.
  \end{aligned}
\end{equation*}
To this end, all conditions of \cite[Lemma~A.5]{Kerimkulov2020} are verified, and thus, the following estimation holds for any $n\geq 1$:
\begin{equation}
  \label{eq:tmp-801}
  \begin{aligned}
  & \hphantom{\,\leq} \operatorname{\mathbb{E}}|Y_t^n - Y_t^*|^2 + \operatorname{\mathbb{E}}\int_t^Te^{\gamma (s-t)}\|Z_s^n - Z^*_s\|^2\,ds \\
& \leq \tilde{q} \operatorname{\mathbb{E}}\int_t^T e^{\gamma (s-t)} \|z^{n-1}_s - Z^*_s\|^2\,ds,
  \end{aligned}
\end{equation}
where $\gamma > 0$ and $\tilde{q}\in(0,1/2)$ depend only on the Lipschitz constant in Assumption~\ref{assumption:1}. Introducing the following notations
\begin{equation*}
  \begin{aligned}
    a_n&\coloneqq \operatorname{\mathbb{E}}|Y_t^n - Y_t^*|^2 = |v^{\alpha^n}(t,x) - v^*(t,x)|^2,\\
    b_n&\coloneqq \operatorname{\mathbb{E}}\int_t^Te^{\gamma (s-t)}\|Z_s^n - Z_s^*\|^2\,ds,\\
    c_n&\coloneqq \operatorname{\mathbb{E}}\int_t^Te^{\gamma (s-t)}\|z^n_s - Z^n_s\|^2\,ds \leq e^{\gamma(T-t)}\epsilon_n,
  \end{aligned}
\end{equation*}
we further relax the inequality Eq.~\eqref{eq:tmp-801} to (letting $q=2\tilde{q}$)
\begin{equation}
  \label{eq:tmp-841}
  a_n + b_n \leq q(b_{n-1} + c_{n-1}),\quad  \forall n\geq 1.
\end{equation}
Noting that $a_n\geq0$, we substitute $b_n\leq q(b_{n-1}+c_{n-1})$ into the right-hand side of Eq.~\eqref{eq:tmp-841} repeatly:
\begin{equation}
  \label{eq:tmp-858}
  \begin{aligned}
    a_n + b_n &\leq qc_{n-1} + q^2(b_{n-2}+c_{n-2})\\
    &\leq qc_{n-1} + q^2 c_{n-2} + q^3(b_{n-3}+c_{n-3})\\
    &\leq qc_{n-1} + \cdots + q^{n-1}c_1 + q^n(b_0+c_0) \eqqcolon S_n.
  \end{aligned}
\end{equation}
Without loss of generality, we assume $\operatorname{limsup} \epsilon_n < \infty$. Otherwise, the equality to be proved holds trivially. Then, we have $\operatorname{limsup} c_n \leq e^{\gamma(T-t)} \operatorname{limsup}\epsilon_n < \infty$. This means there is a positive integer $M$ such that $c_n$ is bounded by some $c < \infty$ for any $n\geq M$. Hence,
\begin{equation*}
  \begin{aligned}
  S_n &= qc_{n-1} + \cdots + q^{n-M}c_M +\cdots + q^nc_0 + q^nb_0 \\
  &\leq (q + q^2 + \cdots + q^{n-M})c \\
  & \hphantom{\,\leq} + q^{n-M+1}\max\{c_k:0\leq k\leq M-1\} + q^nb_0\\
  &\leq \frac{q}{1-q}c + q^{n-M+1}\max\{c_k:0\leq k\leq M-1\} + q^nb_0.
  \end{aligned}
\end{equation*}
This implies that $S_n$ is also bounded for sufficient large $n$, i.e., $\operatorname{limsup}S_n <\infty$. Observating that $S_n$ satisfies the recurrence equation $S_n = q(S_{n-1}+c_{n-1})$, we can conclude that $\operatorname{limsup}S_n < \frac{q}{1-q} \operatorname{limsup}c_n$ by taking limsup on both sides. Noting $a_n \leq S_n$, we have
\begin{equation*}
  \operatorname*{limsup}_{n\to\infty} a_n  \leq \frac{q}{1-q} \operatorname*{limsup}_{n\to\infty} c_n.
\end{equation*}
Expanding the definitions of $a_n$ and $c_n$ finishes the proof.
\end{proof}
\begin{remark}
  \label{remark:error-offpolicy-proof-proposition-1}
  If $\alpha^b=\alpha^0$ and $\epsilon_n=0$ for any $n$, then $c_n=0$ for any $n$, and Eq.~\eqref{eq:tmp-858} is reduced to $a_n+b_n\leq q^nb_0$. Dropping $b_n$ and expanding the definition of $a_n$ yield the Eq.~\eqref{eq:PDE-convergence}. This shows that our proof can be adopted to prove Proposition~\ref{prop:PDE-convergence}.
\end{remark}

\section{Solving FBSDEs by Optimization}\label{sec:approximation}

In this section, we discuss how to solve the optimization problems encountered in the FBSDE-based subroutines, in which we propose a novel criterion, called the (general) BML criterion. Due to the uncoupling nature of the FBSDEs in our policy iteration algorithms, we focus on solving BSDEs.

\subsection{A practical objective function}\label{sec:BSDE-ML}
The on-policy subroutine involves a BSDE in the form
\begin{equation}
  \label{eq:simple-BSDE}
Y_s = \xi + \int_s^Tf_\tau\,d\tau - \int_s^T\langle Z_\tau,dW_\tau\rangle,\quad \forall s\in[t,T].
\end{equation}
Specifically, for a trial process $z\in \mathbb{H}^2$, we are interested in calculating the distance $ \operatorname{\mathbb{E}}\int_t^T\|Z_s-z_s\|^2\,ds$ between $z$ and the true solution $Z$. The difficulty is that $Z$ is not known and goes into the equation. Hence, we need to find practical objective functions that do not explicitly contain $Z$. For this purpose, the following theorem provides useful insights.

\begin{theorem}
  \label{theorem:BSDE-ML-loss-on-policy}
  Suppose that $\xi\in L^2_{\mathcal{F}_T}$ and $f\in \mathbb{H}^2$. Then,  BSDE~\eqref{eq:simple-BSDE} admits a unique adapted solution $(Y,Z)\in\mathbb{S}^2\times\mathbb{H}^2$. For  adapted process $z\in\mathbb{H}^2$, let $\widetilde{Y}^z_s$ denote the process (not necessarily adapted)
  \begin{equation*}
      \widetilde{Y}^z_s = \xi + \int_s^Tf_\tau\,d\tau - \int_s^T\langle z_\tau, dW_\tau\rangle,\quad \forall s\in[t,T].
  \end{equation*}
  Then, it holds that
  \begin{equation}
    \label{eq:BSDE-ML-loss-on-policy}
      \operatorname{\mathbb{E}}|\widetilde{Y}^z_t - \operatorname{\mathbb{E}} \widetilde{Y}^z_t|^2 = \operatorname{\mathbb{E}}\int_t^T\|Z_s - z_s\|^2\,ds.
  \end{equation}
\end{theorem}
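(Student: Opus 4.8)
The plan is to exploit the fact that the driver of \eqref{eq:simple-BSDE} does not depend on $Y$ or $Z$, so that the equation is essentially a martingale-representation statement and the claimed identity reduces to the It\^o isometry. First I would dispatch the existence and uniqueness claim: since $\xi\in L^2_{\mathcal{F}_T}$ and $f\in\mathbb{H}^2$, the random variable $\zeta\coloneqq\xi+\int_t^Tf_\tau\,d\tau$ lies in $L^2_{\mathcal{F}_T}$, and applying the martingale representation theorem to the square-integrable martingale $s\mapsto\operatorname{\mathbb{E}}[\zeta\mid\mathcal{F}_s]$ produces a unique $Z\in\mathbb{H}^2$; setting $Y_s\coloneqq\operatorname{\mathbb{E}}[\xi+\int_s^Tf_\tau\,d\tau\mid\mathcal{F}_s]$ then yields the unique adapted solution $(Y,Z)\in\mathbb{S}^2\times\mathbb{H}^2$. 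This is the standard linear-BSDE construction and may simply be cited.

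The core of the argument is an algebraic comparison of $\widetilde{Y}^z_t$ with the true solution evaluated at $s=t$. Both representations share the same terminal data, so that
\[
  \widetilde{Y}^z_t = \xi + \int_t^Tf_\tau\,d\tau - \int_t^T\langle z_\tau,dW_\tau\rangle,\qquad
  Y_t = \xi + \int_t^Tf_\tau\,d\tau - \int_t^T\langle Z_\tau,dW_\tau\rangle.
\]
Subtracting cancels the common term $\xi+\int_t^Tf_\tau\,d\tau$ and leaves
\[
  \widetilde{Y}^z_t - Y_t = \int_t^T\langle Z_\tau - z_\tau, dW_\tau\rangle.
\]
Because $z,Z\in\mathbb{H}^2$ we have $Z-z\in\mathbb{H}^2$, so the right-hand side is a genuine It\^o integral with zero mean, and taking expectations gives $\operatorname{\mathbb{E}}\widetilde{Y}^z_t = \operatorname{\mathbb{E}}Y_t$.

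The final step uses the structural feature of the weak formulation that $\mathcal{F}_t$ is trivial: the filtration is generated by $\{W_s\}_{t\leq s\leq T}$ with $W_t=0$, so $Y_t$ is deterministic and hence $\operatorname{\mathbb{E}}Y_t = Y_t = \operatorname{\mathbb{E}}\widetilde{Y}^z_t$. Substituting into the displayed difference gives $\widetilde{Y}^z_t - \operatorname{\mathbb{E}}\widetilde{Y}^z_t = \int_t^T\langle Z_\tau - z_\tau, dW_\tau\rangle$, and a single application of the It\^o isometry delivers \eqref{eq:BSDE-ML-loss-on-policy}. The point requiring the most care is precisely this determinism of $Y_t$: were $Y_t$ merely $\mathcal{F}_t$-measurable but nondegenerate, the centered quantity $\widetilde{Y}^z_t-\operatorname{\mathbb{E}}\widetilde{Y}^z_t$ would pick up an additional variance term $\operatorname{Var}(Y_t)$ — the cross term would still vanish, since $Y_t-\operatorname{\mathbb{E}}Y_t$ is $\mathcal{F}_t$-measurable and the stochastic integral is a martingale increment over $[t,T]$ — and the clean isometric identity would fail. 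Everything else follows directly from the It\^o isometry together with the integrability guaranteed by $z,Z\in\mathbb{H}^2$.
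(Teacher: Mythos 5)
Your proof is correct and takes essentially the same route as the paper: both arguments rest on the triviality of $\mathcal{F}_t$ (so that $Y_t = \operatorname{\mathbb{E}} Y_t = \operatorname{\mathbb{E}} \widetilde{Y}^z_t$ almost surely) together with a single application of It\^o's isometry to $\widetilde{Y}^z_t - Y_t = \int_t^T\langle Z_\tau - z_\tau, dW_\tau\rangle$. The only cosmetic difference is that the paper organizes the computation as a three-term expansion of $\operatorname{\mathbb{E}}|\widetilde{Y}^z_t - \operatorname{\mathbb{E}}\widetilde{Y}^z_t|^2$ around $Y_t$ and shows the cross and variance terms vanish, whereas you substitute $\operatorname{\mathbb{E}}\widetilde{Y}^z_t = Y_t$ directly.
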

\begin{proof}
  The uniqueness and existence are standard results for BSDEs; see \cite[Chapter~6]{Pham2009} for example. We rewrite the left-hand side of Eq.~\eqref{eq:BSDE-ML-loss-on-policy} as
\begin{equation*}
 \operatorname{\mathbb{E}}|\widetilde{Y}^z_t - Y_t|^2 + 2 \operatorname{\mathbb{E}}[(\widetilde{Y}^z_t - Y_t)(Y_t - \operatorname{\mathbb{E}}\widetilde{Y}^z_t)] + \operatorname{\mathbb{E}}|Y_t - \operatorname{\mathbb{E}}\widetilde{Y}^z_t|^2.
\end{equation*}
Due to the fact that $\mathcal{F}_t$ contains only $\mathbb{P}$-null sets, we know that $Y_t = \operatorname{\mathbb{E}}Y_t$ holds almost surely. Moreover,
  \begin{equation*}
    \operatorname{\mathbb{E}}\widetilde{Y}^z_t = \operatorname{\mathbb{E}}\biggl[\xi + \int_t^Tf_s\,ds\biggr] = \operatorname{\mathbb{E}} Y_t.
  \end{equation*}
Thus, $Y_t- \operatorname{\mathbb{E}}\widetilde{Y}^z_t$ is almost surely zero and 
  \begin{align*}
    \operatorname{\mathbb{E}}|\widetilde{Y}^z_t - \operatorname{\mathbb{E}} \widetilde{Y}^z_t|^2 &= \operatorname{\mathbb{E}}|\widetilde{Y}^z_t - Y_t|^2 \\
&= \operatorname{\mathbb{E}}\biggl[- \int_t^T\langle z_s,dW_s\rangle + \int_t^T\langle Z_s,dW_s\rangle\biggr]^2.
  \end{align*}
  Thus, the desired equality holds due to It\^o's isometry.
\end{proof}
\begin{remark}
  By Remark~\ref{remark:admissibility}, the BSDE in the on-policy subroutine satisfies the conditions here. Thus, $ \operatorname{\mathbb{E}}|\widetilde{Y}^z_t - \operatorname{\mathbb{E}} \widetilde{Y}^z_t|^2$ can be used in the place of the objective function. We call this the special BML criterion, where its general form is discussed in the next subsection.
\end{remark}

An intuitive explanation of the BML criterion is based on the measurability. By definition, $(\widetilde{Y}^z,z)$ has already satisfied the stochastic integral relationship as $(Y,Z)$. Not surprisingly, this is not sufficient to conclude that it is a solution, as $z$ is just arbitrarily selected. The key is that a true pair of solution $(Y,Z)$ should also be adapted. That is to say, $\widetilde{Y}^z_s$ should be $\mathcal{F}_s$-measurable for any $s\in[t,T]$. This is not a trivial matter since the definition of $\widetilde{Y}_s^z$ involves the ``future'' information, particularly the $\{W_\tau\}_{s\leq \tau\leq T}$. Assume, however, that $\widetilde{Y}^z_{t'}$  has been proved to be $\mathcal{F}_{t'}$-measurable. Then it is safe to conclude that $\widetilde{Y}^z_s$ is $\mathcal{F}_s$-measurable for any $s\in[t',T]$. This is because for any $s\in [t',T]$, we have
\begin{align*}
  \widetilde{Y}^z_{t'} &= \xi + \int_{t'}^Tf_\tau\,d\tau - \int_{t'}^T\langle z_\tau,dW_\tau\rangle \\
& = \widetilde{Y}^z_{s} + \int_{t'}^sf_\tau\,d\tau - \int_{t'}^s\langle z_\tau,dW_\tau\rangle.
\end{align*}
Clearly, the integral part is $\mathcal{F}_s$-measurable. As a result, $\widetilde{Y}^z_{s}$ is $\mathcal{F}_{s}$-measurable because $\widetilde{Y}^z_{t'}$ is $\mathcal{F}_{s}$-measurable (recall that $\mathcal{F}_{t'}\subset\mathcal{F}_s$ if $t'\leq s$).

The left-hand side of Eq.~\eqref{eq:BSDE-ML-loss-on-policy} serves as a criterion of the measurability loss of $\widetilde{Y}^z_t$ with respect to $\mathcal{F}_t$. Recall that $\mathcal{F}_t = \sigma(\mathcal{N}\cup\sigma(W_t))$, where $\mathcal{N}$ is the collection of $\mathbb{P}$-null sets and $\sigma(W_t)$ is the trivial $\sigma$-algebra with $W_t=0$. $\widetilde{Y}^z_t$ is $\mathcal{F}_t$-measurable if and only if $\widetilde{Y}_t^z$ is a constant almost surely. To put it in another way, $\widetilde{Y}^z_t$ should be equal to the expectation almost surely. This is exactly the case that Eq.~\eqref{eq:BSDE-ML-loss-on-policy} equals 0. 

\subsection{The BML Criterion}
According to Theorem~\ref{theorem:BSDE-ML-loss-on-policy}, the distance $ \operatorname{\mathbb{E}}\int_t^T\|Z_s-z_s\|^2$ can be calculated with only samples of $\xi,f$ and $W$ in BSDE~\eqref{eq:simple-BSDE}. This allows an optimization-based approach to solving the $Z$ part of solutions by parameterizing the trial process $z$, and then minimizing the practical objective function. However, in many applications, obtaining the $Y$ part of solutions may be appealing as well. Indeed, according to Feynman-Kac's formula, $Y_t$ is the value function at $(t,x)$. If we manage to find the exact or an approximated solution of $Y$, then we also find a method to solve PDEs in the form of Eq.~\eqref{eq:PDE-characterization}.

In the proof of Theorem~\ref{theorem:BSDE-ML-loss-on-policy}, we ultilize the fact that $ \operatorname{\mathbb{E}}\widetilde{Y}^z_t = \operatorname{\mathbb{E}}Y_t = Y_t$ holds almost surely. Unfortunately, $\widetilde{Y}^z$ is not a suitable replacement for $Y$ in applications. The major issue is that the definition of $\widetilde{Y}^z$ is ``anticipated''. Even if $z\equiv Z$, calculating the value of $\widetilde{Y}^z_t$ by its definition requires samples of $\{W_s;t\leq s\leq T\}$ and $\{f_s;t\leq s\leq T\}$, which are not available at the time instant $t$. Nevertheless, $\widetilde{Y}^z$ differs from the true solution only by a martingale term, and this difference can be eliminated by taking conditional expectation:
\begin{equation}
  \label{eq:conditional-equality}
\operatorname{\mathbb{E}}[\widetilde{Y}^z_s\,|\,\mathcal{F}_s] = 
\operatorname{\mathbb{E}}[Y_s\,|\,\mathcal{F}_s] = Y_s,\quad\mathbb{P}\text{-a.s.},\quad\forall s\in[t,T].
\end{equation}
In light of this, we extend  Theorem~\ref{theorem:BSDE-ML-loss-on-policy} by adding the distance between a trial solution $\tilde{v}\in\mathbb{S}^2$ and the true solution $Y$.

\begin{theorem}
  \label{theorem:BSDE-ML-loss-on-policy-extended}
  Let the condition of Theorem~\ref{theorem:BSDE-ML-loss-on-policy} hold and use the same notation. Then, for any adapted process $\tilde{v}\in\mathbb{S}^2$, there is
\begin{equation}
  \label{eq:general-BSDE-ML}
  \begin{aligned}
\operatorname{\mathbb{E}}\int_t^T|\widetilde{Y}^z_s - \tilde{v}_s|^2\,\nu(ds) &= \operatorname{\mathbb{E}}\int_t^T\int_s^T\|Z_\tau-z_\tau\|^2\,d\tau\,\nu(ds) \\
&\hphantom{\,=} + \operatorname{\mathbb{E}}\int_t^T|Y_s - \tilde{v}_s|^2\,\nu(ds),
  \end{aligned}
\end{equation}
where $\nu$ is an arbitrary $\sigma$-finite measure on $[t,T]$.
\end{theorem}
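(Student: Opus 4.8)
The plan is to reduce the statement to a pointwise-in-$s$ identity and then integrate it against $\nu$. I would fix $s\in[t,T]$ and split
\[
\widetilde{Y}^z_s - \tilde{v}_s = (\widetilde{Y}^z_s - Y_s) + (Y_s - \tilde{v}_s),
\]
square, and take expectation. This produces a squared ``measurability-loss'' term $\operatorname{\mathbb{E}}|\widetilde{Y}^z_s - Y_s|^2$, the squared approximation error $\operatorname{\mathbb{E}}|Y_s - \tilde{v}_s|^2$, and a cross term; the whole argument hinges on showing the cross term vanishes and on evaluating the first term. This mirrors the structure already used in the proof of Theorem~\ref{theorem:BSDE-ML-loss-on-policy}, now carried out at a general time $s$ rather than at $s=t$.

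For the cross term I would invoke Eq.~\eqref{eq:conditional-equality}, which records that $\operatorname{\mathbb{E}}[\widetilde{Y}^z_s \mid \mathcal{F}_s] = Y_s$; equivalently $\widetilde{Y}^z_s - Y_s = \int_s^T\langle Z_\tau - z_\tau, dW_\tau\rangle$ is a ``future'' stochastic integral with zero conditional mean given $\mathcal{F}_s$. Since $Y$ and $\tilde{v}$ are adapted, $Y_s - \tilde{v}_s$ is $\mathcal{F}_s$-measurable, so the tower property gives
\[
\operatorname{\mathbb{E}}\bigl[(\widetilde{Y}^z_s - Y_s)(Y_s - \tilde{v}_s)\bigr] = \operatorname{\mathbb{E}}\bigl[(Y_s - \tilde{v}_s)\,\operatorname{\mathbb{E}}[\widetilde{Y}^z_s - Y_s \mid \mathcal{F}_s]\bigr] = 0.
\]
Itô's isometry then yields $\operatorname{\mathbb{E}}|\widetilde{Y}^z_s - Y_s|^2 = \operatorname{\mathbb{E}}\int_s^T\|Z_\tau - z_\tau\|^2\,d\tau$, so for every $s$ I obtain the pointwise identity $\operatorname{\mathbb{E}}|\widetilde{Y}^z_s - \tilde{v}_s|^2 = \operatorname{\mathbb{E}}\int_s^T\|Z_\tau - z_\tau\|^2\,d\tau + \operatorname{\mathbb{E}}|Y_s - \tilde{v}_s|^2$. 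Note that $\widetilde{Y}^z_s\in L^2$ for each $s$ follows from $\xi\in L^2_{\mathcal{F}_T}$, $f\in\mathbb{H}^2$, and $z\in\mathbb{H}^2$, so every expectation here is finite and the decomposition is legitimate.

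The final step integrates this identity over $[t,T]$ against $\nu$ and interchanges $\operatorname{\mathbb{E}}$ with $\int\cdot\,\nu(ds)$ to match the stated form. This measure-theoretic interchange is the one place requiring genuine care, and I expect it to be the main obstacle rather than the algebra: since each integrand is nonnegative and $\nu$ is $\sigma$-finite, Tonelli's theorem applies to the product of the $\sigma$-finite measure $\nu$ with $\mathbb{P}$ (and with Lebesgue measure on $[s,T]$ for the iterated integral) and licenses the exchange with no integrability precondition, so the displayed equality holds as an identity in $[0,\infty]$. To run Tonelli I must first secure joint measurability of $(s,\omega)\mapsto\widetilde{Y}^z_s(\omega)$, $Y_s(\omega)$, and $\tilde{v}_s(\omega)$; this follows from the adaptedness (hence measurability) of the processes $\widetilde{Y}^z$, $Y$, $\tilde{v}$ together with the pathwise regularity of their integral definitions. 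The hypothesis that $\nu$ is merely $\sigma$-finite, rather than finite, is precisely what is needed to keep the interchange valid, which is why it appears in the statement.
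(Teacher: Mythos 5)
Your proposal is correct and follows essentially the same argument as the paper: the same decomposition $\widetilde{Y}^z_s - \tilde{v}_s = (\widetilde{Y}^z_s - Y_s) + (Y_s - \tilde{v}_s)$, the cross term killed by the tower property since $Y_s - \tilde{v}_s$ is $\mathcal{F}_s$-measurable, It\^o's isometry for the remaining term, and $\sigma$-finiteness of $\nu$ to justify the interchange of expectation and $\nu$-integration. The only (cosmetic) difference is ordering—you establish the pointwise-in-$s$ identity first and then integrate against $\nu$, whereas the paper swaps the integrals at the outset—and your explicit appeal to Tonelli and joint measurability makes that interchange slightly more carefully justified than in the paper.
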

\begin{proof}
  Similarly, we prove Eq.~\eqref{eq:general-BSDE-ML} by splitting the square term into three terms and showing that the expectation of the cross term is zero. As $\nu$ is $\sigma$-finite, we are able to change the order of expectation and integration, and thus, the left-hand side of Eq.~\eqref{eq:general-BSDE-ML} equals
\begin{equation*}
\int_t^T\biggl[ \operatorname{\mathbb{E}}|\widetilde{Y}^z_s - Y_s|^2 + 2 \operatorname{\mathbb{E}}[(\widetilde{Y}^z_s - Y_s)(Y_s - \tilde{v}_s)] + \operatorname{\mathbb{E}}|Y_s - \tilde{v}_s|^2\biggr]\,\nu(ds).
\end{equation*}
The first term can be transformed with It\^o's isometry:
\begin{align*}
\operatorname{\mathbb{E}}\int_t^T|\widetilde{Y}^z_s - Y_s|^2\,\nu(ds) 
&= \operatorname{\mathbb{E}}\int_t^T\biggl|\int_s^T\langle z_\tau-Z_\tau,dW_\tau\rangle\biggr|^2\,\nu(ds) \\
&= \operatorname{\mathbb{E}}\int_t^T\int_s^T\| z_\tau-Z_\tau\|^2\,d\tau\,\nu(ds).
\end{align*}
The second term vanishes according to the tower property of conditional expectation:
\begin{equation*}
\begin{aligned}
\operatorname{\mathbb{E}}[(\widetilde{Y}^z_s - Y_s)(Y_s - \tilde{v}_s)] &= \operatorname{\mathbb{E}}\bigl[\operatorname{\mathbb{E}}[(\widetilde{Y}^z_s - Y_s)(Y_s - \tilde{v}_s)\,|\,\mathcal{F}_s]\bigr] \\
&= \operatorname{\mathbb{E}}\bigl[(Y_s - \tilde{v}_s)\operatorname{\mathbb{E}}[(\widetilde{Y}^z_s - Y_s)\,|\,\mathcal{F}_s]\bigr] \\
&= 0.
\end{aligned}
\end{equation*}
The last equality comes from the fact that $\mathbb{E}[(\widetilde{Y}^z_t - Y_t)\,|\,\mathcal{F}_s]$ is zero almost surely.
\end{proof}
\begin{remark}
We call Eq.~\eqref{eq:general-BSDE-ML} the general BML criterion.
While the special BML criterion focuses solely on the $Z$ part, its generalization takes the $Y$ part into account as well. We do this by relacing $ \operatorname{\mathbb{E}}Y^z_t$ with $\tilde{v}_s$. Moreover, Eq.~\eqref{eq:general-BSDE-ML} introduces a measure on the time space $[t,T]$. The left-hand side of Eq.~\eqref{eq:general-BSDE-ML} actually descrbies the distance between $\widetilde{Y}^z$ and $\tilde{v}$ on the product space $(\Omega\times[t,T], \mathbb{P}\otimes\nu)$. On the other hand, this practical objective function can also be interpreted as the distance between $(\tilde{v},z)$ and $(Y,Z)$ using this product measure. Under this generalization, we are given the freedom of choosing $\nu$ when comapring the trial solution with the true solution. In particular, if $\nu$ is set to the Dirac measure centered on $t$ and $\tilde{v}$ to $ \operatorname{\mathbb{E}}[\widetilde{Y}_s^z\,|\,\mathcal{F}_s]$, then it comes to the special BML criterion. It is also possible to choose different settings of $\nu$ and $(\tilde{v},z)$. It will be discussed shortly and how the general BML criterion degenerates into existing methods.
\end{remark}
\begin{remark}
\label{remark:decoupled-z-tildev}
It is worth noting that if the choice of $\tilde{v}$ does not rely on $z$, then the two terms in Eq.~\eqref{eq:general-BSDE-ML} are decoupled. This means that the gradient with respect to $\tilde{v}$ is independent of the gradient with respect to $z$. Therefore, $z$ and $\tilde{v}$ can be optimized independently. In this case, our estimation of $Z$ does not affect the estimation of $Y$, and vice versa. One advantage of this property is that even if $z$ is actually far from the true solution $Z$, it is still possible to have a good estimation of $Y$ that is fairly accuracy. As an application, we could fix $z\equiv0$ and focus solely on estimating of $Y$ by optimizing only $\tilde{v}$. According to our analysis, this simply results in the distance between $z$ and $Z$ remaining constant, and we may still be able to obtain a reasonable estimation of $Y$ if the general BML criterion reaches its minimum. 
\end{remark}

By choosing $\nu=\delta_t$ and $\tilde{v}(s,\omega)\equiv y_0$, we recover the popular Deep BSDE method proposed in \cite{Han2018}. There, $\delta_t$ is the Dirac measure centered at $t$ and $y_0\in\mathbb{R}$ does not change along with time $s$ and the sample event $\omega$. The general BML criterion is then reduced to $\operatorname{\mathbb{E}}|\widetilde{Y}_t - y_0|^2$, which can be interpreted as
\begin{equation}
  \label{eq:tmp-654}
\begin{aligned}
 &\hphantom{\,=\,} \operatorname{\mathbb{E}}\biggl|\xi - \biggl(y_0 - \int_t^Tf_s\,ds + \int_t^T\langle z_s,dW_s\rangle\biggr)\biggr|^2\\
&= \operatorname{\mathbb{E}}\int_t^T\|z_s - Z_s\|^2\,ds + \operatorname{\mathbb{E}}|Y_t - y_0|^2
\end{aligned}
\end{equation}
by Theorem~\ref{theorem:BSDE-ML-loss-on-policy-extended}. The original motivation of Deep BSDE method is to examine the process
\begin{equation*}
\widetilde{Y}^{z,y_0}_s= y_0 -\int_t^sf_\tau\,d_\tau, + \int_t^s\langle z_\tau,dW_\tau\rangle.
\end{equation*}
In fact, this is a forward stochastic differential equation. One can relate it to BSDE~\eqref{eq:simple-BSDE} by requiring $Y_T^{z,y_0}=\xi$ holds almost surely, i.e., forcing $\operatorname{\mathbb{E}}|\xi - Y_T^{z,y_0}|^2=0$. This is exactly the criterion used in Deep BSDE method. If the choices of $y_0$ and $z$ do not depend on each other, Remark~\ref{remark:decoupled-z-tildev} reveals that this criterion is equivalent to $ \operatorname{\mathbb{E}}|Y_t-y_0|^2$ when one is only interested in estimating the value of $Y_t$. We should also mention that Deep BSDE method applies for a wider class of BSDEs other than the simple form Eq.~\eqref{eq:simple-BSDE}. There, the generator $f_s$ is coupled with $(Y_s, Z_s)$ by a nonlinear function $f$. In that case, Eq.~\eqref{eq:BSDE-ML-loss-on-policy} and Eq.~\eqref{eq:general-BSDE-ML} are no longer valid. We will briefly discuss that topic at the end of this section.

By choosing $\nu(ds)=ds$ and $z\equiv0$, we recover the martingale approach proposed in \cite{jia_zhou_2022}. The general BML criterion is then reduced to
\begin{align*}
&\hphantom{\,=\,} \operatorname{\mathbb{E}}\int_t^T\biggl|\biggl(\xi + \int_t^Tf_\tau\,d\tau\biggr) - \biggl( \tilde{v}_s + \int_t^sf_\tau\,d\tau\biggr)\biggr|^2\,ds\\
&= \operatorname{\mathbb{E}}\int_t^T\int_s^T\|Z_\tau\|^2\,d\tau\,ds + \operatorname{\mathbb{E}}\int_t^T|Y_s-\tilde{v}_s|^2\,ds
\end{align*}
by Theorem~\ref{theorem:BSDE-ML-loss-on-policy-extended}. In the martingale approach, one takes no care of the $Z$ part of solution and just set the trial solution $z$ to zero. This treatment is permitted by Remark~\ref{remark:decoupled-z-tildev} as well. Minimizing the distance between $\widetilde{Y}^z$ and $\tilde{v}$ with $z\equiv0$ is indeed equivalent to minimizing the distance between $\tilde{v}$ and the true solution $Y$. The similar result is reported along with the martingale approach in \cite{jia_zhou_2022}, but there is no discussion about its connection to Deep BSDE method.

\begin{corollary}
  Let the condition of Theorem~\ref{theorem:BSDE-ML-loss-on-policy} hold and use the same notation. For any $y_0\in\mathbb{R}$ and $z\in\mathbb{H}^2$, let $\widehat{Y}_s^{z,y_0}$ denote the process 
  \begin{equation*}
    \widehat{Y}_s^{z,y_0} = y_0 - \int_t^sf_\tau\,d\tau + \int_t^s\langle z_\tau,dW_\tau\rangle,\quad \forall s\in[t,T].
  \end{equation*}
  Then, it holds that
  \begin{equation*}
    \min_{y_0\in\mathbb{R}} \operatorname{\mathbb{E}} |\widehat{Y}_T^{z,y_0}-\xi|^2 = \operatorname{\mathbb{E}} |\widetilde{Y}^z_0 - \operatorname{\mathbb{E}}\widetilde{Y}^z_0|^2.
  \end{equation*}
\end{corollary}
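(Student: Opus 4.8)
The plan is to reduce the left-hand side to a one-dimensional least-squares problem in $y_0$ and then finish with the elementary bias--variance decomposition; no stochastic calculus beyond the definitions is actually required. First I would compare the two anticipating processes at their respective endpoints. Evaluating the definition of $\widehat{Y}^{z,y_0}$ at $s=T$ gives
\[
\widehat{Y}^{z,y_0}_T = y_0 - \int_t^T f_\tau\,d\tau + \int_t^T\langle z_\tau, dW_\tau\rangle,
\]
while the process $\widetilde{Y}^z$ from Theorem~\ref{theorem:BSDE-ML-loss-on-policy}, evaluated at the initial time, is
\[
\widetilde{Y}^z_t = \xi + \int_t^T f_\tau\,d\tau - \int_t^T\langle z_\tau, dW_\tau\rangle
\]
(here I read the subscript $0$ in the statement as the initial time $t$ of Theorem~\ref{theorem:BSDE-ML-loss-on-policy}). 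Adding these two identities, both Lebesgue integrals and both stochastic integrals cancel, leaving the pathwise equality
\[
\widehat{Y}^{z,y_0}_T - \xi = y_0 - \widetilde{Y}^z_t .
\]

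Squaring and taking expectations then yields $\operatorname{\mathbb{E}}|\widehat{Y}_T^{z,y_0} - \xi|^2 = \operatorname{\mathbb{E}}|\widetilde{Y}_t^z - y_0|^2$ for every $y_0\in\mathbb{R}$, so minimizing the left-hand side over $y_0$ is exactly minimizing the scalar quadratic $y_0\mapsto \operatorname{\mathbb{E}}|\widetilde{Y}_t^z - y_0|^2$. At this point I would note that $\widetilde{Y}_t^z\in L^2$: since $\xi\in L^2_{\mathcal{F}_T}$, $f\in\mathbb{H}^2$ and $z\in\mathbb{H}^2$, each of the three terms defining $\widetilde{Y}_t^z$ is square-integrable, so $\operatorname{\mathbb{E}}\widetilde{Y}_t^z$ and its variance are finite and the minimization is well posed. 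Completing the square gives
\[
\operatorname{\mathbb{E}}|\widetilde{Y}_t^z - y_0|^2 = \operatorname{\mathbb{E}}\bigl|\widetilde{Y}_t^z - \operatorname{\mathbb{E}}\widetilde{Y}_t^z\bigr|^2 + \bigl(\operatorname{\mathbb{E}}\widetilde{Y}_t^z - y_0\bigr)^2,
\]
which is minimized precisely at $y_0 = \operatorname{\mathbb{E}}\widetilde{Y}_t^z$, with minimal value $\operatorname{\mathbb{E}}|\widetilde{Y}_t^z - \operatorname{\mathbb{E}}\widetilde{Y}_t^z|^2$. This is exactly the claimed right-hand side, which completes the argument.

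There is essentially no analytic obstacle here; the result is immediate once the endpoint identity is in hand. The only points requiring care are purely clerical: keeping the integration limits and the signs straight in the cancellation step (the plus/minus patterns in $\widehat{Y}^{z,y_0}$ and $\widetilde{Y}^z$ are opposite, which is what makes the cross terms vanish), and reconciling the initial-time subscript $0$ in the corollary with the subscript $t$ used in Theorem~\ref{theorem:BSDE-ML-loss-on-policy}. It is worth emphasizing that, unlike the proof of Theorem~\ref{theorem:BSDE-ML-loss-on-policy}, this argument uses neither It\^o's isometry nor the BSDE solution pair $(Y,Z)$: the identity $\widehat{Y}_T^{z,y_0}-\xi = y_0 - \widetilde{Y}_t^z$ holds pathwise for arbitrary $z\in\mathbb{H}^2$, so the corollary is a genuinely elementary consequence of the definitions.
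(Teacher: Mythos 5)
Your proof is correct, but it takes a genuinely different and more elementary route than the paper's. The paper proves the corollary by composing two of its decomposition results: Eq.~\eqref{eq:tmp-654} (Theorem~\ref{theorem:BSDE-ML-loss-on-policy-extended} specialized to $\nu=\delta_t$, $\tilde v\equiv y_0$) writes $\operatorname{\mathbb{E}}|\widehat{Y}_T^{z,y_0}-\xi|^2 = \operatorname{\mathbb{E}}\int_t^T\|z_s-Z_s\|^2\,ds + \operatorname{\mathbb{E}}|Y_t-y_0|^2$, so minimizing over $y_0$ kills the second term (the optimal $y_0$ being $Y_t$, which is a.s.\ deterministic), and Theorem~\ref{theorem:BSDE-ML-loss-on-policy} then identifies the surviving term with $\operatorname{\mathbb{E}}|\widetilde{Y}^z_t - \operatorname{\mathbb{E}}\widetilde{Y}^z_t|^2$; everything is routed through the true BSDE solution $(Y,Z)$ and It\^o's isometry. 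You instead observe the pathwise identity $\widehat{Y}_T^{z,y_0}-\xi = y_0 - \widetilde{Y}^z_t$ (the Lebesgue and stochastic integrals cancel because the sign patterns are opposite) and finish with the scalar bias--variance decomposition; the solution pair $(Y,Z)$ never appears. Both arguments are valid. Your route is self-contained, shows the identity is essentially definitional, and does not invoke existence or uniqueness for the BSDE. The paper's route, while heavier, buys the interpretation that matters in context: it identifies the common minimal value with $\operatorname{\mathbb{E}}\int_t^T\|Z_s-z_s\|^2\,ds$ and the minimizing $y_0$ with $Y_t$, i.e., it exhibits the Deep BSDE criterion (with optimized $y_0$) and the special BML criterion as two expressions of the distance to the true $Z$, which is the point of the surrounding discussion. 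One small caveat on your closing remark: the square-integrability of $\int_t^T\langle z_\tau,dW_\tau\rangle$, which you correctly flag as needed for the minimization to be well posed, is itself a consequence of It\^o's isometry (the $L^2$ theory of the stochastic integral), so ``uses neither It\^o's isometry'' is a slight overstatement --- the isometry is demoted from a computational tool to a finiteness check, not eliminated.
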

\begin{proof}
  This is a direct consequence of Theorem~\ref{theorem:BSDE-ML-loss-on-policy} and Eq.~\eqref{eq:tmp-654}.
\end{proof}
\begin{remark}
  In general, the criterion $ \operatorname{\mathbb{E}}|\widehat{Y}_s^{z,y_0}-\xi|^2$, used in Deep BSDE method, depends on both $z$ and $y_0$. If $y_0$ is optimized with fixed $z$, it comes to the special BML criterion.
\end{remark}

\subsection{Optimize with the proposed criterion}
In this subsection, we illustrate how to solve a BSDE by optimizing the proposed criterion. As discussed at the end of the last subsection, the general BML criterion is a class of objective functions and choosing different $(\nu,\tilde{v},z)$ leads to different specific objective functions. We summarize four sets of $(v,\tilde{v},z)$ in Table~\ref{table:four-criteria} and refer to them as Set (a)--(d). It should be noted that Set (a) and Set (c) are used in Deep BSDE method and the martingale approach, respectively. Set (b) corresponds to the special BML criterion proposed in Theorem~\ref{theorem:BSDE-ML-loss-on-policy}, while Set (d) is considered here to show the general form helps us finding new objective functions. It should be pointed out that $\tilde{v}_s=\operatorname{\mathbb{E}}[\widetilde{Y}^z_s\,|\,\mathcal{F}_s]$in Set (b) is merely provided for completeness, and is not required for calculations. We stress that these four sets cover only a small part of the general BML criterion, and it is always possible to design appropriate forms of $\nu,\tilde{v}$ and $z$ based on specific requirements. In order to focus on ideas, we test these four criteria on the following toy example. A more involved example will be discussed in the next section.

\begin{table}
  \caption{\label{table:four-criteria}Four special cases of the general BML criterion.}
  \centering
  \medskip
  \begin{tabular}{ccccc}
    \toprule
    Name & $d\nu/ds$ & $\tilde{v}$ & $z$ & Practical objective function \\
    \midrule
    Set (a) & $\delta_t$ & $y_0$ & $z_s$ & $ \operatorname{\mathbb{E}}|\widetilde{Y}^z_0-y_0|^2$ \\
    Set (b) & $\delta_t$ & $ \operatorname{\mathbb{E}}[\widetilde{Y}^z_s\,|\,\mathcal{F}_s]$ & $z_s$ & $\operatorname{\mathbb{E}}|\widetilde{Y}^z_0 - \operatorname{\mathbb{E}}\widetilde{Y}^z_0|^2$ \\
    Set (c) & $1$ & $\tilde{v}_s$ & 0 & $\operatorname{\mathbb{E}}\int_t^T|\widetilde{Y}^0_s - \tilde{v}_s|^2\,ds$ \\
    Set (d) & $1$ & $\tilde{v}_s$ & $z_s$ & $ \operatorname{\mathbb{E}}\int_t^T|\widetilde{Y}^z_s - \tilde{v}_s|^2\,ds$\\
    \bottomrule
  \end{tabular}
\end{table}

\begin{example}
  \label{example:1}
  Solve the BSDE~\eqref{eq:simple-BSDE} with $t=0, T=1, f(s,\omega)\equiv-1, \xi=\langle W_T,W_T\rangle/n$, where $n$ is the dimension of the Brownian motion and is set to 100.
\end{example}

We parameterize the trial processes in Table~\ref{table:four-criteria} as $\tilde{v}_s=W_s^\intercal \theta_y W_s,~z_s=2\theta_z W_s$. Additionally, Set (a) involves optimizing a standalone variable $y_0$. The Brownian motion is simulated with time step $\Delta t=0.01$. The expectation is estimated via Monte Carlo simulation with sample size $M=16$. Integration is approximated with the Euler method. Optimization method is chosen as the standard stochastic gradient descent (SGD) method with different learning rates: $1.0\times10^{-1}$ for $y_0$, $1.0\times 10^{-3}$ for $\theta_z$ and $1.0\times10^{-5}$ for $\theta_y$. The initial values of $y_0,\theta_y,\theta_z$ are set to $1.0, -1.0, -1.0$, respectively. For each set, we perform 200 gradient steps and repeat the whole procedure 10 times with different random seeds. The true value of these variables are obtained via  theoretical analysis. It can be verified by It\^o's formula that $Y_s=\langle W_s,W_s\rangle/n, Z_s=2W_s/n$ is a pair of adapted solution. This solution is also unique because $\xi\in L^2_{\mathcal{F}_T}$ and $f\in\mathbb{H}^2$. Thus, the optimal values are $\theta_y^*=\theta_z^*=1/n$. Additionally, the $y_0$ in Set (a) is used to estimate the value of $Y_0$, and thus, has the optimal value $y_0^*=0$. Results are reported in Figure~\ref{fig:1}.

\begin{figure}
  \centering
  \includegraphics[width=.4\textwidth]{./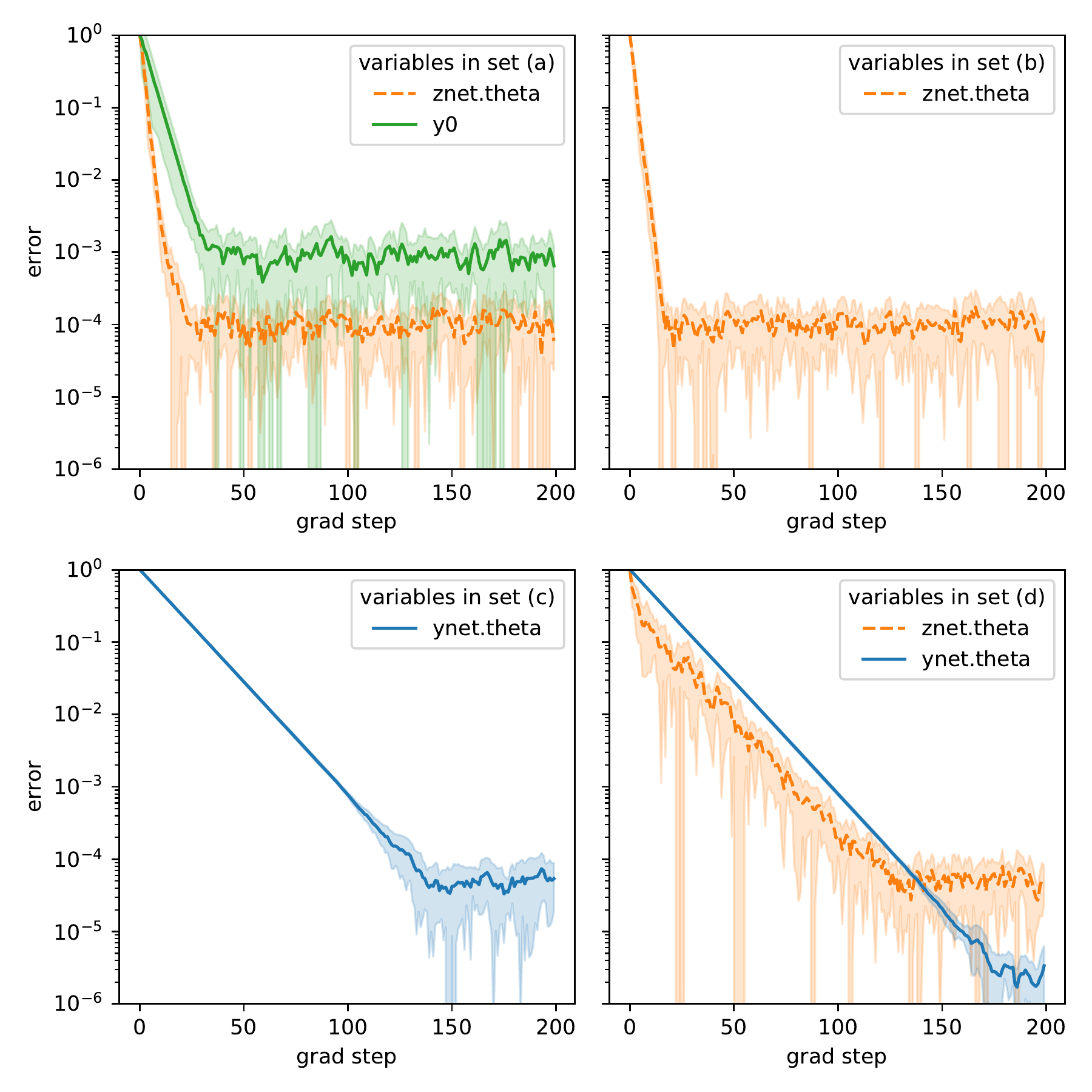}
  \caption{\label{fig:1}The absolute errors of $\theta_y,\theta_z,y_0$ at each gradient steps for Example~\ref{example:1}. From left to right and from top to bottom, the subplots correspond to Set (a), (b), (c) and (d). The solid lines and shaded areas indicate the mean and standard deviation of absolute errors for 10 runs.}
\end{figure}

Figure~\ref{fig:1} plots the absolute errors of $\theta_y,\theta_z,y_0$ at each gradient steps in four subplots, corresponding to the four sets in Table~\ref{table:four-criteria}. It can be seen that all variables in these sets converge to their true values with fairly high accuracy in 200 gradient steps. There are two interesting phenomena of convergence trends. The first one is that $\theta_z$ converges very quickly in Set (a) and Set (b) with almost the same rate, but is slightly slower in Set (d). The second one is that the $\theta_y$ in Set (d) converges to a better value than that in Set (c). We can explain them with the help Theorem~\ref{theorem:BSDE-ML-loss-on-policy-extended}. 

According to Eq.~\eqref{eq:general-BSDE-ML}, the objective functions in these four sets can be interpreted as follows. Set (a) minimizes $ \operatorname{\mathbb{E}}\int_t^T\|z_s-Z_s\|^2\,ds$, which also is the term to be minimized in Set (b) plus an additional term $ \operatorname{\mathbb{E}}|y_0-Y_0|^2$. Therefore, the gradients for $\theta_z$ computed in Set (a) and Set (b) should be identical except for the noise introduced by Monte Carlo sampling. This is the reason why the convergent behavior of $\theta_z$ is similar in these two sets. On the other hand, the $\theta_z$ in Set (d) appears in a double integration $ \operatorname{\mathbb{E}}\int_t^T\int_s^T\|z_\tau-Z_\tau\|^2\,d\tau\,ds$ due to the choice of $\nu$. In order to explain the second phenomena, we need to review the proof of Theorem~\ref{theorem:BSDE-ML-loss-on-policy-extended}. There, the cross-term is eliminated by taking expectation. However, in practice, this term does not vanish if we use Monte Carlo estimation. A simple analysis shows that its variance is proportional to $|Y_s-\tilde{v}_s|^2$, which is also minimized in Set (d), but not in Set (c). Therefore, a slight performance improvement in Set (d) compared to Set (c) is expected.

In addition, Theorem~\ref{theorem:BSDE-ML-loss-on-policy-extended} gives us the hint of choosing better learning rates. Take $y_0$ as an example at first. In Set (a), $y_0$ appears in the term $ \operatorname{\mathbb{E}}|y_0-Y_0|^2$. In optimization theory, the optimal learning rate for quadratic function $a\|x-x^*\|^2$ is $\frac{1}{2a}$; see, for example, Nesterov \textit{et al}~\cite{Nesterov2018}. Thus, the optimal learning rate for $y_0$ is 0.5. Considering the noise effect, we select a much smaller and thus safer value 0.1. For $\theta_y$, the analysis becomes a little more complicated. Eq.~\eqref{eq:general-BSDE-ML} tells us that $\theta_y$ appears in the term $ \operatorname{\mathbb{E}}\int_t^T|Y_s - \tilde{v}_s|^2\,ds$. Substituting $Y_s=\theta^*_y\|W_s\|^2$ and $\tilde{v}_s=\theta_y\|W_s\|^2$ into it yields $(\theta_y-\theta_y^*)\int_t^T \operatorname{\mathbb{E}}\|W_s\|^4\,ds$. By integrating on a sphere, we can calculate that $\int_t^T \operatorname{\mathbb{E}}\|W_s\|^4\,ds=n(n+2)(T-t)^3/3$. Thus, the optimal learning rate for $\theta_y$ is in the order of $10^{-4}$. Based on this, we select the value $1\times10^{-5}$.

\subsection{Other type of BSDEs}
The BSDE~\eqref{eq:simple-BSDE} considered before is only a basic type of general BSDEs. In many applications, for example in our off-policy subroutine, the generator $f$ may be unknown and is expressed as $f(s,Y_s,Z_s)$ with a deterministic (or even random) coefficient $f(\cdot,\cdot,\cdot)$. Elementary extensions of Theorem~\ref{theorem:BSDE-ML-loss-on-policy} and Theorem~\ref{theorem:BSDE-ML-loss-on-policy-extended} in this line are provided below.

Consider the following BSDE
\begin{equation}
  \label{eq:BSDE-couple-Z}
  Y_s = \xi + \int_s^Tf(\tau,Z_\tau)\,d\tau - \int_s^T\langle Z_\tau,dW_\tau\rangle,~~ \forall s\in[t,T],
\end{equation}
where the generator $f$ is only coupled with $Z$. For any $z\in\mathbb{H}^2$, we can still introduce the process $\widetilde{Y}^z$ by replacing $Z$ with $z$. Sadly, Eq.~\eqref{eq:conditional-equality} fails to hold because $f(s,z_s)$ may be not equal to $f(s,Z_s)$. As a result, Theorem~\ref{theorem:BSDE-ML-loss-on-policy} and Theorem~\ref{theorem:BSDE-ML-loss-on-policy-extended} no longer hold. Nevertheless, The general BML criterion for trial solutions $(\tilde{v},z)$ can still be calculated and optimized, and obviously the true $(Y,Z)$ is a global minimum of this criterion. Thus, the proposed criterion equals zero is a necessary condition for solving such a BSDE. Moreover, we are able to say it is also  a sufficient condition to some extent. 

\begin{proposition}
\label{proposition:nonlinear-BSDE-ML-loss}
Suppose that $\xi\in L^2_{\mathcal{F}_T}$ and $f:\Omega\times[t,T]\times\mathbb{R}^d\to\mathbb{R}$ satisfies the following conditions: 1) for any $z\in\mathbb{R}^d$, $f(s,z)$ is adapted; 2) $f(s,0)\in\mathbb{H}^2$; 3) there exists a constant $L$ such that for any $z^1,z^2\in\mathbb{R}^d$, $$|f(s,z^1)-f(s,z^2)|\leq L|z^1-z^2|, ~~ds\otimes d\mathbb{P}\textrm{-a.e.}$$ on $[t,T]\times\Omega$. Then, BSDE~\eqref{eq:BSDE-couple-Z} admits a unique adapted solution $(Y,Z)\in\mathbb{S}^2\times\mathbb{H}^2$. For any adapted process $z\in\mathbb{H}^2$, let $\widetilde{Y}^z_s$ denote the process (not necessarily adapted)
  \begin{equation*}
      \widetilde{Y}^z_s = \xi + \int_s^Tf(\tau,z_\tau)\,d\tau - \int_s^T\langle z_\tau, dW_\tau\rangle,\quad\forall s\in[t,T].
  \end{equation*}
Then, $\operatorname{\mathbb{E}}\int_t^T\|Z_s-z_s\|^2\,ds$ equals zero if and only if $ \operatorname{\mathbb{E}}|\widetilde{Y}^z_t - \operatorname{\mathbb{E}}\widetilde{Y}^z_t|^2$ equals zero.
\end{proposition}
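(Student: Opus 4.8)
The plan is to prove the two directions separately, with essentially all the substance lying in the ``if'' direction. I would first record that the existence and uniqueness of the adapted solution $(Y,Z)\in\mathbb{S}^2\times\mathbb{H}^2$ under conditions 1)--3) is a classical BSDE result and may simply be cited; the new content is the equivalence with the measurability-loss quantity. I would also flag at the outset why the earlier arguments do not transfer: once $f$ is coupled to $Z$, the clean It\^o-isometry splitting behind Theorem~\ref{theorem:BSDE-ML-loss-on-policy} breaks down because $f(\tau,z_\tau)\neq f(\tau,Z_\tau)$ in general. Hence I would route the argument through \emph{uniqueness} of the BSDE rather than through a direct variance computation.

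For the easy (``only if'') direction, suppose $\operatorname{\mathbb{E}}\int_t^T\|Z_s-z_s\|^2\,ds=0$, so that $z_s=Z_s$ holds $ds\otimes d\mathbb{P}$-a.e. Then the defining integrals of $\widetilde{Y}^z_s$ and of $Y_s$ agree almost surely for each $s$, giving $\widetilde{Y}^z_t=Y_t$. Since $\mathcal{F}_t$ contains only $\mathbb{P}$-null sets, $Y_t$ is deterministic, whence $\widetilde{Y}^z_t=\operatorname{\mathbb{E}}\widetilde{Y}^z_t$ a.s.\ and the criterion vanishes.

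The (``if'') direction is the main step. Assume $\operatorname{\mathbb{E}}|\widetilde{Y}^z_t-\operatorname{\mathbb{E}}\widetilde{Y}^z_t|^2=0$, so $\widetilde{Y}^z_t$ equals a constant a.s.\ and is therefore $\mathcal{F}_t$-measurable. I would then invoke the measurability-propagation identity already spelled out in Section~\ref{sec:BSDE-ML}: from the definition, $\widetilde{Y}^z_s=\widetilde{Y}^z_t-\int_t^sf(\tau,z_\tau)\,d\tau+\int_t^s\langle z_\tau,dW_\tau\rangle$, and since both integrals are $\mathcal{F}_s$-measurable while $\widetilde{Y}^z_t$ is $\mathcal{F}_t$-measurable and hence $\mathcal{F}_s$-measurable (as $\mathcal{F}_t\subset\mathcal{F}_s$), the process $\widetilde{Y}^z$ is adapted. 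A short routine check then confirms $\widetilde{Y}^z\in\mathbb{S}^2$, using $|f(\tau,z_\tau)|\le|f(\tau,0)|+L\|z_\tau\|$ together with $f(\cdot,0)\in\mathbb{H}^2$, $z\in\mathbb{H}^2$, and Doob's and the Burkholder--Davis--Gundy inequalities for the martingale term. Consequently $(\widetilde{Y}^z,z)$ is an adapted pair in $\mathbb{S}^2\times\mathbb{H}^2$ satisfying \emph{exactly} the relation defining BSDE~\eqref{eq:BSDE-couple-Z} with $z$ in the role of $Z$; that is, it is an adapted solution. By uniqueness, $(\widetilde{Y}^z,z)=(Y,Z)$, so $z=Z$ holds $ds\otimes d\mathbb{P}$-a.e.\ and $\operatorname{\mathbb{E}}\int_t^T\|Z_s-z_s\|^2\,ds=0$.

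The conceptual crux, and the step I expect to demand the most care, is recognizing that vanishing measurability loss upgrades the merely \emph{anticipating} process $\widetilde{Y}^z$ into a genuinely adapted one, thereby converting the trial pair $(\widetilde{Y}^z,z)$ into a bona fide solution to which uniqueness applies. This is precisely what replaces the isometry argument that is no longer available once $f$ depends on $Z$; the remaining integrability verifications are routine and can be compressed.
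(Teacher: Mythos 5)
Your proof is correct, but the ``if'' direction takes a genuinely different route from the paper's. The paper introduces an auxiliary \emph{decoupled} BSDE~\eqref{eq:tmp-771} with frozen generator $\hat{f}_\tau\coloneqq f(\tau,z_\tau)$, applies the linear identity of Theorem~\ref{theorem:BSDE-ML-loss-on-policy} to conclude $z=\widehat{Z}$ $ds\otimes d\mathbb{P}$-a.e., and then runs an It\^o-formula/Gr\"onwall contraction estimate on $e^{\gamma s}|Y_s-\widehat{Y}_s|^2$ (with $\gamma>2L^2$) to identify $Z$ with $\widehat{Z}$. You instead observe that vanishing measurability loss makes $\widetilde{Y}^z_t$ a.s.\ constant, propagate $\mathcal{F}_s$-measurability forward via the identity $\widetilde{Y}^z_s=\widetilde{Y}^z_t-\int_t^s f(\tau,z_\tau)\,d\tau+\int_t^s\langle z_\tau,dW_\tau\rangle$, verify $\widetilde{Y}^z\in\mathbb{S}^2$, and then note that $(\widetilde{Y}^z,z)$ satisfies BSDE~\eqref{eq:BSDE-couple-Z} \emph{exactly}, so uniqueness forces $(\widetilde{Y}^z,z)=(Y,Z)$. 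Your argument is shorter and arguably more conceptual --- it outsources all quantitative work to the cited uniqueness theorem and directly exploits the measurability-loss interpretation the paper advertises --- at the cost of the (routine but necessary) adaptedness and $\mathbb{S}^2$ verifications, including joint measurability of $(s,\omega)\mapsto f(s,z_s)$. What the paper's heavier machinery buys is reusability: in the generalization Proposition~\ref{proposition:nonlinear-BSDE-ML-loss-extended}, the criterion involves an arbitrary $\sigma$-finite measure $\nu$ and a trial process $\tilde{v}_s$ at times $s>t$, where the trivial-$\sigma$-algebra trick at the single time $t$ is unavailable and your adaptedness-propagation argument (which only moves forward from a time where measurability is known) does not directly apply, whereas the auxiliary-BSDE-plus-contraction scheme carries over essentially verbatim.
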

\begin{proof}
  The uniqueness and existence are standard results for BSDEs; see \cite[Chapter~6]{Pham2009} for example. 

Let  $\operatorname{\mathbb{E}}\int_t^T\|Z_s-z_s\|^2\,ds=0$ be true. Noting the assumptions on $f$, for any $s\in[t,T]$, we have
  \begin{align*}
 &\hphantom{\,=\,}   \operatorname{\mathbb{E}}\biggl[\int_s^Tf(\tau,Z_\tau)\,d\tau - \int_s^Tf(\tau,z_\tau)\,d\tau\biggr]^2 \\
 &\leq(T-s) \operatorname{\mathbb{E}}\int_s^T|f(\tau,Z_\tau) - f(\tau,z_\tau)|^2\,d\tau\\
& \leq L^2(T-s) \operatorname{\mathbb{E}}\int_s^T\|Z_\tau-z_\tau\|^2\,d\tau= 0.
  \end{align*}
Furthermore, according to It\^o's isometry, there is
\begin{equation*}
  \operatorname{\mathbb{E}}\biggl[\int_s^T\langle Z_\tau,dW_\tau\rangle - \int_s^T\langle z_\tau,dW_\tau\rangle\biggr]^2 = \operatorname{\mathbb{E}}\int_s^T\|Z_\tau-z_\tau\|^2=0.
\end{equation*}
Hence, $\widetilde{Y}^z_s=Y_s$ holds almost surely for any $s\in[t,T]$. In particular, $\operatorname{\mathbb{E}}|\widetilde{Y}^z_t - \operatorname{\mathbb{E}}\widetilde{Y}^z_t|^2= \operatorname{\mathbb{E}}|Y_t- \operatorname{\mathbb{E}}Y_t|^2=0$. This proves the ``only if'' part.

In order to prove the ``if'' part, we consider the BSDE
\begin{equation}
  \label{eq:tmp-771}
  \widehat{Y}_s = \xi + \int_s^T\hat{f}_\tau\,d\tau - \int_s^T\langle \widehat{Z}_\tau,dW_\tau\rangle,\quad\forall s\in[t,T],
\end{equation}
where $\hat{f}_\tau\coloneqq f(\tau,z_\tau)$. This is the type of BSDE studied in previous subsections. By assumptions on $f$, the process $\hat{f}\in\mathbb{H}^2$. Applying Theorem~\ref{theorem:BSDE-ML-loss-on-policy} to BSDE~\eqref{eq:tmp-771} concludes that the solution $(\widehat{Y},\widehat{Z})\in\mathbb{S}^2\times\mathbb{H}^2$ uniquely exists and $$ \operatorname{\mathbb{E}}\int_t^T\|\widehat{Z}_s-z_s\|^2\,ds = \operatorname{\mathbb{E}}|\widetilde{Y}^z_t - \operatorname{\mathbb{E}}\widetilde{Y}^z_t|^2=0.$$ Theorefore, $z_s=\widehat{Z}_s$ holds $ds\otimes d\mathbb{P}$ almost everywhere.

In view of BSDE~\eqref{eq:BSDE-couple-Z} and BSDE~\eqref{eq:tmp-771}, we denote $$\overline{Y}\coloneqq Y-\widehat{Y},~~ \overline{Z}\coloneqq Z - \widehat{Z}, ~~\bar{f}_s\coloneqq f(s,Z_s)-\hat{f}_s.$$ Let $\gamma$ be a positive constant such that $\gamma > 2L^2$. By applying It\^o's formula to $e^{\gamma s}|\overline{Y}_s|^2$, we obtain
\begin{equation}
\label{eq:tmp-776}
\begin{aligned}
&\hphantom{\,=\,}\operatorname{\mathbb{E}}e^{\gamma t}|\overline{Y}_t|^2 + \operatorname{\mathbb{E}}\int_t^Te^{\gamma s}(\gamma|\overline{Y}_s|^2 + \|\overline{Z}_s\|^2)\,ds \\
& = 2 \operatorname{\mathbb{E}}\int_t^Te^{\gamma s}\overline{Y}_s\bar{f}_s\,ds - 2 \operatorname{\mathbb{E}}\int_t^Te^{\gamma s}\overline{Y}_s\langle \overline{Z}_s,dW_s\rangle.
\end{aligned}
\end{equation}
A standard analysis based on Burkholder-Davis-Gundy inequality shows that  the second term vanishs; see the proof of \cite[ Theorem~6.2.1]{Pham2009}. On the other hand, for any $s\in[t,T]$,
\begin{equation}
  \label{eq:tmp781}
2\overline{Y}_s\bar{f}_s\leq \gamma|\overline{Y}_s|^2 + \frac{1}{\gamma}|\bar{f}_s|^2 \leq \gamma|\overline{Y}_s|^2 + \frac{L^2}{\gamma}\|Z_s - z_s\|^2.
\end{equation}
Noting $L^2/\gamma < 1/2$, Eq.~\eqref{eq:tmp-776} and Eq.~\eqref{eq:tmp781}, there is
\begin{align*}
\operatorname{\mathbb{E}}\int_t^Te^{\gamma s}\|Z_s-\widehat{Z}_s\|^2\,ds &\leq \frac{1}{2} \operatorname{\mathbb{E}}\int_t^Te^{\gamma s}\|Z_s - z_s\|^2\,ds \\
& = \frac{1}{2} \operatorname{\mathbb{E}}\int_t^Te^{\gamma s}\|Z_s - \widehat{Z}_s\|^2\,ds.
\end{align*}
The last equality comes from the fact that $z_s=\widehat{Z}_s$ holds $ds\otimes d\mathbb{P}$ almost everywhere. Hence, $ \operatorname{\mathbb{E}}\int_t^Te^{\gamma s}\|Z_s-\widehat{Z}_s\|^2\,ds = 0$. Replacing $\widehat{Z}_s$ with $z_s$ again finishes our proof. 
\end{proof}
\begin{remark}
  When the linear BSDE~\eqref{eq:simple-BSDE} generalizes to the nonlinear BSDE~\eqref{eq:BSDE-couple-Z}, the equality $ \operatorname{\mathbb{E}}|\widetilde{Y}_t^z- \operatorname{\mathbb{E}}\widetilde{Y}_t^z|^2= \operatorname{\mathbb{E}}\int_t^T\|Z_s-z_s\|^2$ may be not true. But, we can still say that the left-hand side is zero if and only if the right-hand side is zero. Thus, we believe that minimizing the special BML criterion is still a reasonable choice for solving the $Z$ part of BSDEs.
\end{remark}
\begin{remark}
  Under Assumptions~\ref{assumption:1}, the BSDE in the off-policy subroutine satisfies the conditions here.
\end{remark}

\begin{proposition}
  \label{proposition:nonlinear-BSDE-ML-loss-extended}
  Let the condition of Proposition~\ref{proposition:nonlinear-BSDE-ML-loss} hold and use the same notation. Let $\tilde{v}_s$ be an adapted process in $\mathbb{S}^2$ and $\nu$ be a $\sigma$-finite measure on $[t,T]$. Then,
\begin{equation*}
\operatorname{\mathbb{E}}\int_s^T\|Z_\tau-z_\tau\|^2\,d\tau = \operatorname{\mathbb{E}}|Y_s - \tilde{v}_s|^2 = 0,\quad \nu\textrm{-a.e.},\quad\forall s\in[t,T]
\end{equation*}
if and only if $ \operatorname{\mathbb{E}}\int_t^T|\widetilde{Y}^z_s - \tilde{v}_s|^2\,\nu(ds)=0$.
\end{proposition}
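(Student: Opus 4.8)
The plan is to reduce everything to the linear theory by freezing the generator, exactly as in the proof of Proposition~\ref{proposition:nonlinear-BSDE-ML-loss}. Introduce the auxiliary \emph{linear} BSDE~\eqref{eq:tmp-771} with frozen generator $\hat f_\tau:=f(\tau,z_\tau)$, whose unique solution I denote $(\widehat Y,\widehat Z)\in\mathbb{S}^2\times\mathbb{H}^2$ (here $\hat f\in\mathbb{H}^2$ by the Lipschitz and growth hypotheses on $f$ together with $z\in\mathbb{H}^2$). The crucial observation is that the anticipated process $\widetilde Y^z$ in the statement is precisely the process $\widetilde Y^z$ associated with this linear BSDE, since both use the drift $f(\cdot,z_\cdot)$. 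Hence Theorem~\ref{theorem:BSDE-ML-loss-on-policy-extended}, applied to~\eqref{eq:tmp-771} with the given $\tilde v$ and $\nu$, yields the decomposition
\begin{equation*}
\operatorname{\mathbb{E}}\int_t^T|\widetilde Y^z_s-\tilde v_s|^2\,\nu(ds)
= \operatorname{\mathbb{E}}\int_t^T\!\!\int_s^T\|\widehat Z_\tau-z_\tau\|^2\,d\tau\,\nu(ds)
+ \operatorname{\mathbb{E}}\int_t^T|\widehat Y_s-\tilde v_s|^2\,\nu(ds).
\end{equation*}
Both summands are nonnegative, so the left-hand side vanishes if and only if each of them does.

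For the \emph{if} part I would read off from the two vanishing terms that $\widehat Z_\tau=z_\tau$ holds $d\tau\otimes d\mathbb{P}$-a.e.\ on $(s^*,T]\times\Omega$ and that $\widehat Y_s=\tilde v_s$ a.s.\ for $\nu$-a.e.\ $s$, where $s^*$ is the left endpoint of $\operatorname{supp}\nu$; here I use Tonelli to swap $\operatorname{\mathbb{E}}$ and $\nu$, and the monotonicity of $s\mapsto\operatorname{\mathbb{E}}\int_s^T\|\cdot\|^2\,d\tau$ to see that the exceptional set carries no $\nu$-mass, so $\nu([t,s^*))=0$. The key bridging step is then to upgrade these statements about the auxiliary pair $(\widehat Y,\widehat Z)$ to the true pair $(Y,Z)$: on $[s^*,T]$ the identity $\widehat Z=z$ forces $\hat f_\tau=f(\tau,\widehat Z_\tau)$, so $(\widehat Y,\widehat Z)$ restricted to $[s^*,T]$ solves the \emph{nonlinear} BSDE~\eqref{eq:BSDE-couple-Z} on $[s^*,T]$ with terminal value $\xi$; well-posedness of that BSDE (equivalently, the It\^o/Gr\"onwall estimate used in Proposition~\ref{proposition:nonlinear-BSDE-ML-loss}, now run over $[s^*,T]$) gives $(Y,Z)=(\widehat Y,\widehat Z)$ on $[s^*,T]$. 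Substituting $Z=\widehat Z$ and $Y=\widehat Y$ and recalling $\nu([t,s^*))=0$ delivers $\operatorname{\mathbb{E}}\int_s^T\|Z_\tau-z_\tau\|^2\,d\tau=0$ and $\operatorname{\mathbb{E}}|Y_s-\tilde v_s|^2=0$ for $\nu$-a.e.\ $s$.

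For the \emph{only if} part the argument runs the other way and is more elementary. Assuming $\operatorname{\mathbb{E}}\int_s^T\|Z_\tau-z_\tau\|^2\,d\tau=0$ $\nu$-a.e., the same monotonicity localizes $\nu$ to an interval $[s^*,T]$ on which $Z=z$ a.e.; for $s\ge s^*$ the drift and the stochastic integral defining $\widetilde Y^z_s$ then coincide a.s.\ with those defining $Y_s$, whence $\widetilde Y^z_s=Y_s$ a.s. Combined with $\operatorname{\mathbb{E}}|Y_s-\tilde v_s|^2=0$ $\nu$-a.e.\ and another application of Tonelli, this gives $\operatorname{\mathbb{E}}\int_t^T|\widetilde Y^z_s-\tilde v_s|^2\,\nu(ds)=\int_{[s^*,T]}\operatorname{\mathbb{E}}|Y_s-\tilde v_s|^2\,\nu(ds)=0$.

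I expect the bridging step in the \emph{if} direction to be the main obstacle: the BML decomposition speaks only about the frozen-generator solution $(\widehat Y,\widehat Z)$, and one must invoke well-posedness of the nonlinear BSDE to transfer the conclusion to $(Y,Z)$, while simultaneously ensuring that the region of agreement $[s^*,T]$ supports the entire measure $\nu$ so that no $\nu$-a.e.\ conclusion is lost. The remaining measure-theoretic bookkeeping (a possible atom of $\nu$ at $s^*$, half-open versus closed intervals) is harmless because every constraint enters through a $d\tau$-integral and single points are Lebesgue-null.
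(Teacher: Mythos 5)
Your proof is correct, and its skeleton is the same as the paper's: both freeze the generator as $\hat f_\tau\coloneqq f(\tau,z_\tau)$, note that $\widetilde Y^z$ is exactly the anticipated process of the resulting linear BSDE~\eqref{eq:tmp-771}, and apply Theorem~\ref{theorem:BSDE-ML-loss-on-policy-extended} to that auxiliary equation to identify $(\widehat Y,\widehat Z)$ with $(\tilde v,z)$ in the appropriate $\nu$-a.e.\ senses. The one genuine difference is the transfer from $(\widehat Y,\widehat Z)$ back to $(Y,Z)$ in the ``if'' direction: the paper applies It\^o's formula to $e^{4L^2s}|Y_s-\widehat Y_s|^2$, integrates the resulting stability estimate against $\nu$, and substitutes $z=\widehat Z$ to reach a self-bounding inequality $X\leq\tfrac12X$, hence $X=0$; you instead localize $\nu$ to an interval $[s^*,T]$ on which $\widehat Z=z$ holds $d\tau\otimes d\mathbb{P}$-a.e., observe that there the frozen equation coincides with the nonlinear BSDE~\eqref{eq:BSDE-couple-Z}, and invoke uniqueness of its solution on that subinterval. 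The two mechanisms are equivalent in substance---uniqueness is proved by precisely such a Gr\"onwall estimate, as you acknowledge---so yours is a clean repackaging that uses nonlinear well-posedness as a black box, at the cost of the localization bookkeeping, where two small points should be made explicit: the agreement set $\{s:\operatorname{\mathbb{E}}\int_s^T\|\widehat Z_\tau-z_\tau\|^2\,d\tau=0\}$ is closed (monotone convergence as $s$ decreases to its infimum), so it contains $s^*$ itself and not merely $(s^*,T]$, and it is nonempty unless $\nu\equiv0$, in which case the proposition is vacuous. Your ``only if'' argument ($Z=z$ on the region carrying $\nu$ forces $\widetilde Y^z_s=Y_s$ a.s.\ via the Lipschitz property of $f$ and It\^o's isometry, then Tonelli) is the same as the paper's, just spelled out in more detail.
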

\begin{proof}
  The sketch of this proof is similar to that of Proposition~\ref{proposition:nonlinear-BSDE-ML-loss} except for a few minor differences concerning the additional $\tilde{v}$ and $\nu$. A brief description of it is provided below, and readers may refer to Proposition~\ref{proposition:nonlinear-BSDE-ML-loss}'s proof for more explanations.

  We prove the ``only if'' part at first. By the assumption on $f$, we are able to show that $ \widetilde{Y}^z_s=Y_s$ holds $d\nu\times d\mathbb{P}$-a.e.. Hence,
\begin{equation*}
\operatorname{\mathbb{E}}\int_t^T|\widetilde{Y}^z_s - \tilde{v}_s|^2\,\nu(ds) \leq 2 \operatorname{\mathbb{E}}\int_t^T|\widetilde{Y}^z_s -Y_s|^2 + |Y_s - \tilde{v}_s|^2\,\nu(ds),
\end{equation*}
which equals zero by assumptions.

Then we prove the ``if'' part. Consider BSDE~\eqref{eq:tmp-771} with $\hat{f}_\tau\coloneqq f(\tau,z_\tau)$. Applying Theorem~\ref{theorem:BSDE-ML-loss-on-policy-extended} to that BSDE, we conclude that the solution $(\widehat{Y},\widehat{Z})\in\mathbb{S}^2\times\mathbb{H}^2$ uniquely exists and that for any $s\in[t,T]$,
\begin{equation}
\operatorname{\mathbb{E}}\int_s^T\|\widehat{Z}_\tau-z_\tau\|^2\,d\tau = \operatorname{\mathbb{E}}|\widehat{Y}_s - \tilde{v}_s|^2 = 0,\quad\nu\text{-a.e.}.
\label{eq:tmp-815}
\end{equation}
Moreover, in view of BSDE~\eqref{eq:BSDE-couple-Z} and BSDE~\eqref{eq:tmp-771}, we have
\begin{align*}
&\hphantom{\,=\,}\operatorname{\mathbb{E}}e^{4L^2s}|Y_s - \widehat{Y}_s|^2 + \operatorname{\mathbb{E}}\int_s^Te^{4L^2\tau}\|Z_\tau - \widehat{Z}_\tau\|^2\,d\tau \\
&\leq \frac{1}{4} \operatorname{\mathbb{E}}\int_s^Te^{4L^2\tau}\|Z_\tau-z_\tau\|^2\,d\tau.
\end{align*}
Integrating on $([t,T],\nu)$ and noting Eq.~\eqref{eq:tmp-815} yield
\begin{align*}
& \hphantom{\,=\,} \operatorname{\mathbb{E}}\int_t^T\int_s^Te^{4L^2\tau}\|Z_\tau - \widehat{Z}_\tau\|^2\,d\tau\,\nu(ds) \\
&\leq \frac{1}{2} \operatorname{\mathbb{E}}\int_t^T\int_s^Te^{4L^2\tau}\|Z_\tau-\widehat{Z}_\tau\|^2\,d\tau\,\nu(ds).
\end{align*}
Hence, for any $s\in[t,T]$,
\begin{equation*}
\operatorname{\mathbb{E}}\int_s^Te^{4L^2\tau}\|\widehat{Z}_\tau-Z_\tau\|^2\,d\tau = \operatorname{\mathbb{E}}e^{4L^2s}|\widehat{Y}_s - Y_s|^2 = 0,\quad\nu\text{-a.e.}.
\end{equation*}
Using Eq.~\eqref{eq:tmp-815} again finishes our proof.
\end{proof}
\begin{remark}
  This extends Theorem~\ref{theorem:BSDE-ML-loss-on-policy-extended} as  Proposition~\ref{proposition:nonlinear-BSDE-ML-loss} extends  Theorem~\ref{theorem:BSDE-ML-loss-on-policy}. For nonlinear BSDEs, the (general) BML criterion reaches zero can be interpreted as a necessary and sufficient condition of finding solutions. 
\end{remark}

The BSDE encountered in the off-policy subroutine is a special case of the BSDE considered in this subsection, where the generator $f(s,Z)$ is linear to $Z$. While Proposition~\ref{proposition:nonlinear-BSDE-ML-loss} and Proposition~\ref{proposition:nonlinear-BSDE-ML-loss-extended} provide general treatments for nonlinear generator, a generator linearly coupled in $Z$ can also be transformed into a decoupled generator by absorbing the linear coupling term into the Brownian motion using Girsanov's transformation. However, this treatment involves a change of probability measure \cite{Exarchos2018} and is left for future discussion.

In order to verify our theory, we test the four realizations of the propsed general criterion listed in Table~\ref{table:four-criteria} by the following example, which is modified based on Example~\ref{example:1}.

\begin{example}
  \label{example:2}
  Solve the BSDE~\eqref{eq:BSDE-couple-Z} with $t=0,T=1,f(\omega,s,z)=-1+\langle b_0X_s,Z_s\rangle, \xi=\langle X_T,X_T\rangle/n$, where $n=100$ is the dimension of the process $X$ and Brownian motion $W$. The process $X$ satisfies the stochastic differential equation: $X_s=W_s - \int_t^sb_0X_s\,ds$ with $b_0=-0.1$.
\end{example}

We parameterize the trial processes as $\tilde{v}_s=X_s^\intercal\theta_yX_s,~z_s = 2\theta_zX_s$. Other treatments remain unchanged from Example~\ref{example:1}. The true values can be verified by It\^o's formula as well: $\theta^*_y=\theta^*_z=1/n$. Results are reported in Figure~\ref{fig:2}.

\begin{figure}
  \centering
  \includegraphics[width=.4\textwidth]{./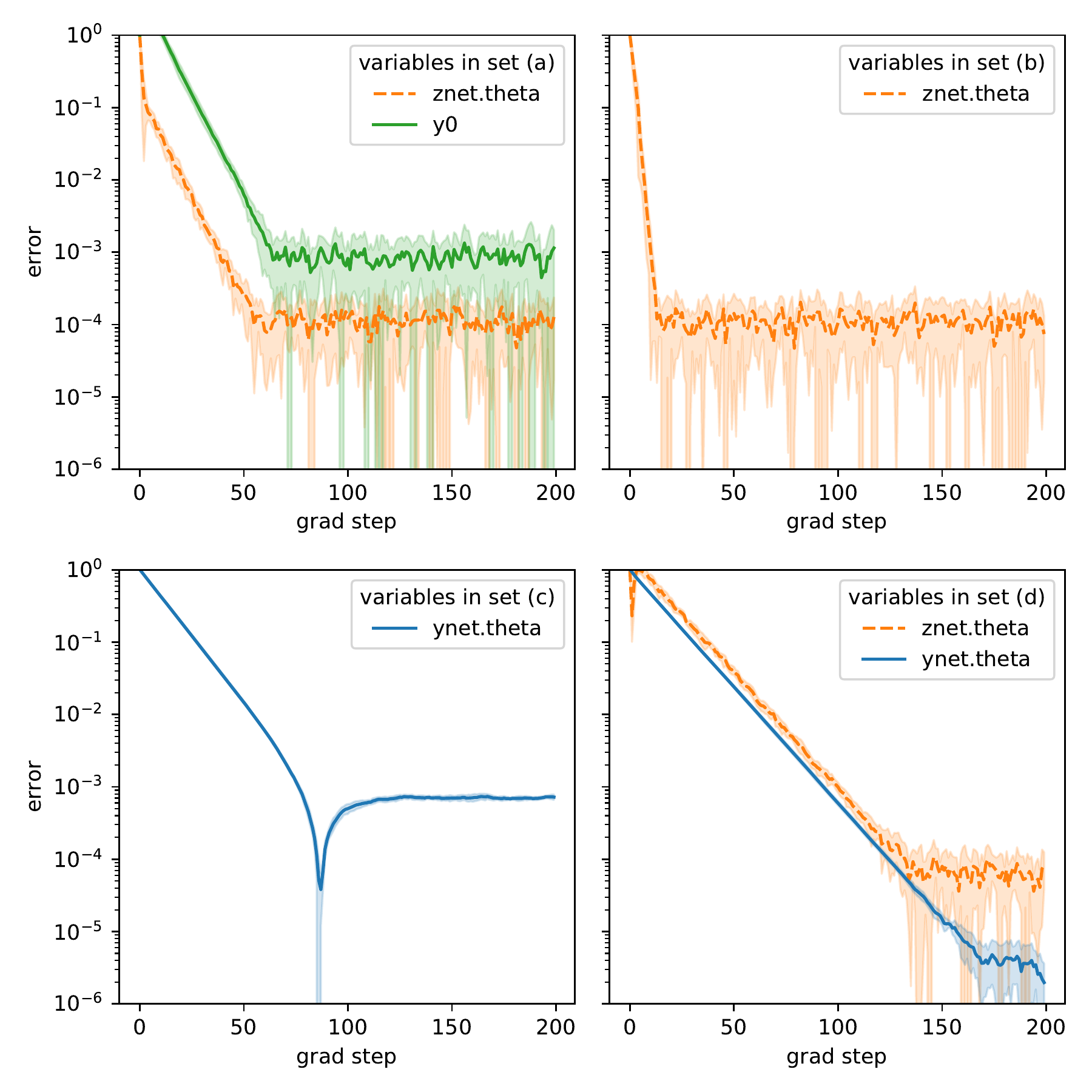}
  \caption{\label{fig:2}The absolute errors of $\theta_y,\theta_z,y_0$ at each gradient steps for Example~\ref{example:2}. From left to right and from top to bottom, the subplots correspond to Set (a), (b), (c) and (d). The solid lines and shaded areas indicate the mean and standard deviation of absolute errors for 10 runs.}
\end{figure}

\section{Simulation Results}\label{sec:simulations}

In this section, we test our on-policy and off-policy subroutines on a 100 dimensional optimal control problem. We obtain the $z$ function in these subroutines via optimizing the general BML criterion discussed in the last section. Specifically, we consider the four cases listed in Table~\ref{table:four-criteria}.

\begin{example}
  \label{example:3}
  Consider the following stochastic optimal control problem, which is an extension of the example in \cite{ji_peng_2020}:
  \begin{align*}
    \operatorname{minimize}&\quad \operatorname{\mathbb{E}}\biggl[\log \frac{1+\|X_T\|^2}{2} + \int_t^T\|\alpha_s\|^2\,ds \biggr],\\
    \operatorname{subject\ to}&\quad X_s = x + \int_t^s\sigma_0(\hat{b}_0\alpha_\tau\,d\tau + dW_\tau),\quad s\in[t,T],
  \end{align*}
  where $W$ is a standard 100 dimensional Brownian motion with $W_t=0$, and $\sigma_0,\hat{b}_0\in\mathbb{R}$ are positive constants. Determine the optimal cost when $x=0, t=0, T=1, \hat{b}_0=1$ and $\sigma_0=\sqrt{2}$.
\end{example}

We run the GPI equipped with Algorithm~\ref{alg:on-policy-FBSDE} and Algorithm~\ref{alg:off-policy-BSDE}. The initial policy is chosen to be $\alpha^0(t,x) = -0.1x$ and the behavior policy $\alpha^b$ is fixed to $\alpha^0$. In order to satisfy Assumption~\ref{assumption:1}.2, we manually clip the components of control inputs to $[-a_{\mathrm{max}},a_{\mathrm{max}}]$, which corresponds to the control set $A=[-a_{\mathrm{max}},a_{\mathrm{max}}]^{100}\subset\mathbb{R}^{100}$. In our experiments, $a_{\mathrm{max}}$ is set to 100. To simulate the forward process, we adopt the Euler-Maruyama method with time step size $\Delta t=0.01$\cite{Higham2001}. We optimize the proposed criterion on PyTorch platform\cite{PyTorch}. Table~\ref{table:four-criteria} is implemented with $\tilde{v}_s=\tilde{v}(s,X_s;\theta_y)$ and $z_s=z(s,X_s;\theta_z)$, where functions $\tilde{v}$ and $z$ are feed-forward neural networks with a single hidden layer. The width of hidden layers is set to 16. We use the SGD optimizer with Nesterov acceleration technique and momentum $1\times10^{-3}$\cite{Sutskever2013}. The optimization procedure is terminated after 75 gradient steps, and in each gradient step, the standard Euclidean norm of the total gradient is clipped to 10, and the learning rates are multiplied by a factor 0.99. Learning rates for $y_0, \theta_y, \theta_z$ are $0.5, 0.1, 0.1$, respectively. The sample size for estimating expectations is 16. For each criterion, we call the on-policy subroutine or the off-policy subroutine 9 consecutive times starting at $\alpha^0$. Results are reported in Figure~\ref{fig:3}.

\begin{figure}
  \centering
  \includegraphics[width=.4\textwidth]{./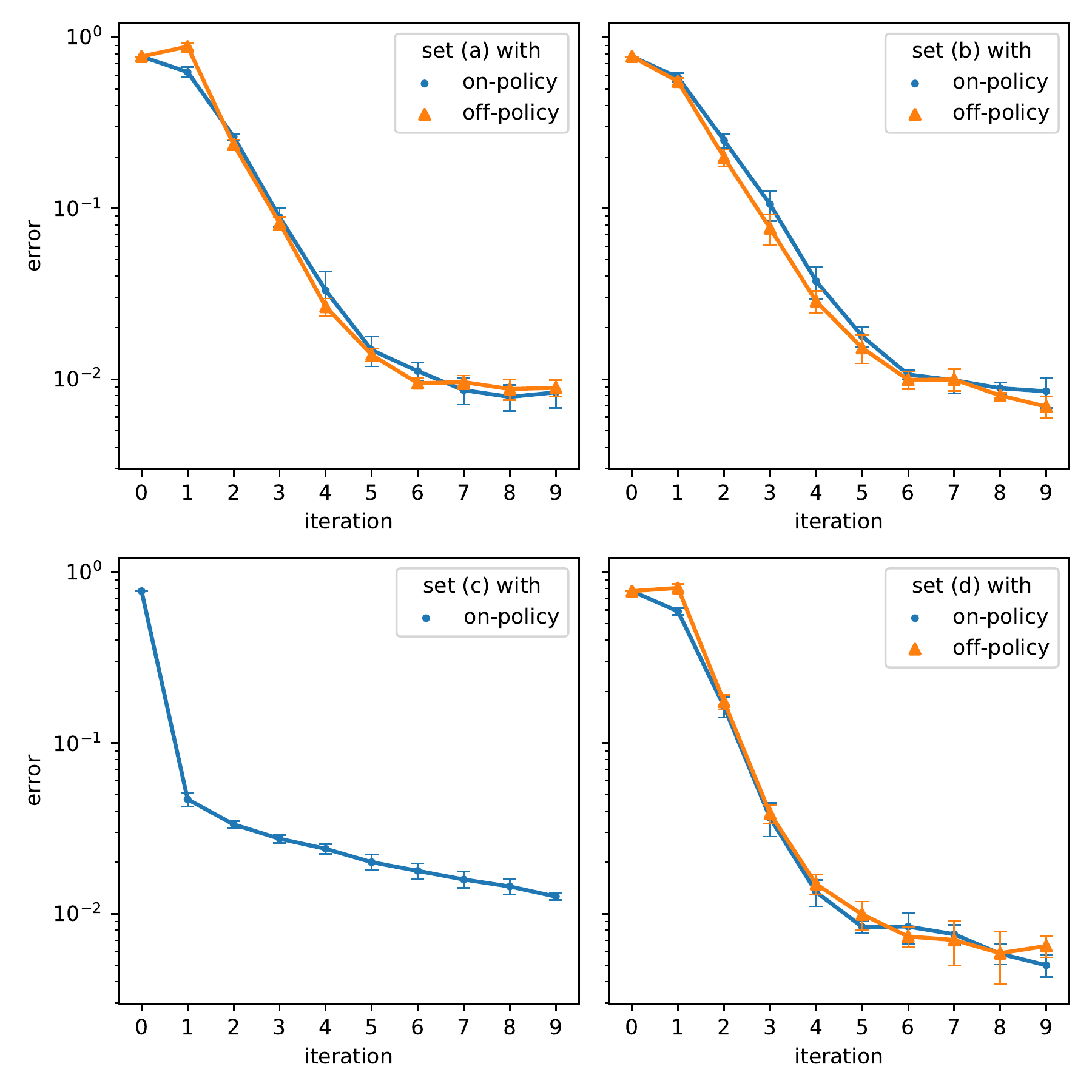}
  \caption{\label{fig:3}The absolute error between the optimal cost and $i$-th policy's cost for Example~\ref{example:3}. From left to right and from top to bottom, the subplots correspond to Set (a), (b), (c) and (d). Each subplot, except for Set (c), contains two lines representing the on-policy and the off-policy subroutines. The data points and error bars represent mean and standard deviation of 5 independent runs.}
\end{figure}

Figure~\ref{fig:3} plots the absolute error between the theoretical optimal cost and $i$-th policy's cost. The theoretical optimal cost is obtained by solving the associated HJB equation
\begin{equation*}
\left\{
\begin{aligned}
&\partial_tv^* + \frac{1}{2}\sigma_0^2\sum_{i=1}^{100}\partial_{x_ix_i}v^* - \frac{1}{4}\sigma_0^2\hat{b}_0^2\sum_{i=1}^{100}|\partial_{x_i}v^*|^2 = 0, \\
&v^*(T,x) = \log(1+\|x\|^2)/2.
\end{aligned}
\right.
\end{equation*}
Applying Hopf-Cole transformation to this equation yields the following representation of the solution\cite{Hopf1950}
\begin{equation*}
  v^*(t,x) = - \frac{2}{\hat{b}_0^2}\log \operatorname{\mathbb{E}}\biggl[\exp\Bigl( - \frac{\hat{b}_0^2}{2}\log \frac{1 + \|x+\sigma_0\epsilon\|^2}{2}\Bigr)\biggr],
\end{equation*}
where $\epsilon\in\mathbb{R}^{100}$ and is normally distributed with mean 0 and covariance matrix $(T-t)I$. We estimate this expectation by Monte Carlo with sample size $M=12800$. Figure~\ref{fig:3} shows that both the on-policy and off-policy subroutines and the four specific criteria can produce a good enough policy after 9 policy iteration steps. It is worth noting that there is no suitable off-policy method for the criterion of Set (c). This is due to the fact that the generator of the BSDE in Algorithm~\ref{alg:off-policy-BSDE} is explicitly coupled with $Z$, and thus, the optimization of $z$ and $\tilde{v}$ is not independent, cf. Remark~\ref{remark:decoupled-z-tildev}. Despite of this, we construct the improved policy by setting $z^\alpha=\sigma_0\partial_x\tilde{v}(\cdot,\cdot;\theta_y)$ in the on-policy subroutine for Set (c).

\begin{example}
  \label{example:4}
  Determine the optimal cost of Example~\ref{example:3} with $\sigma_0=20$.
\end{example}

Compared with the previous example, this only changes the system dynamics. Benefited from the data-driven nature of our algorithms, we can rerun the program with the only difference that trajectories are now sampled from this new system. Results are reported in Figure~\ref{fig:4}.

\begin{figure}
  \centering
  \includegraphics[width=.4\textwidth]{./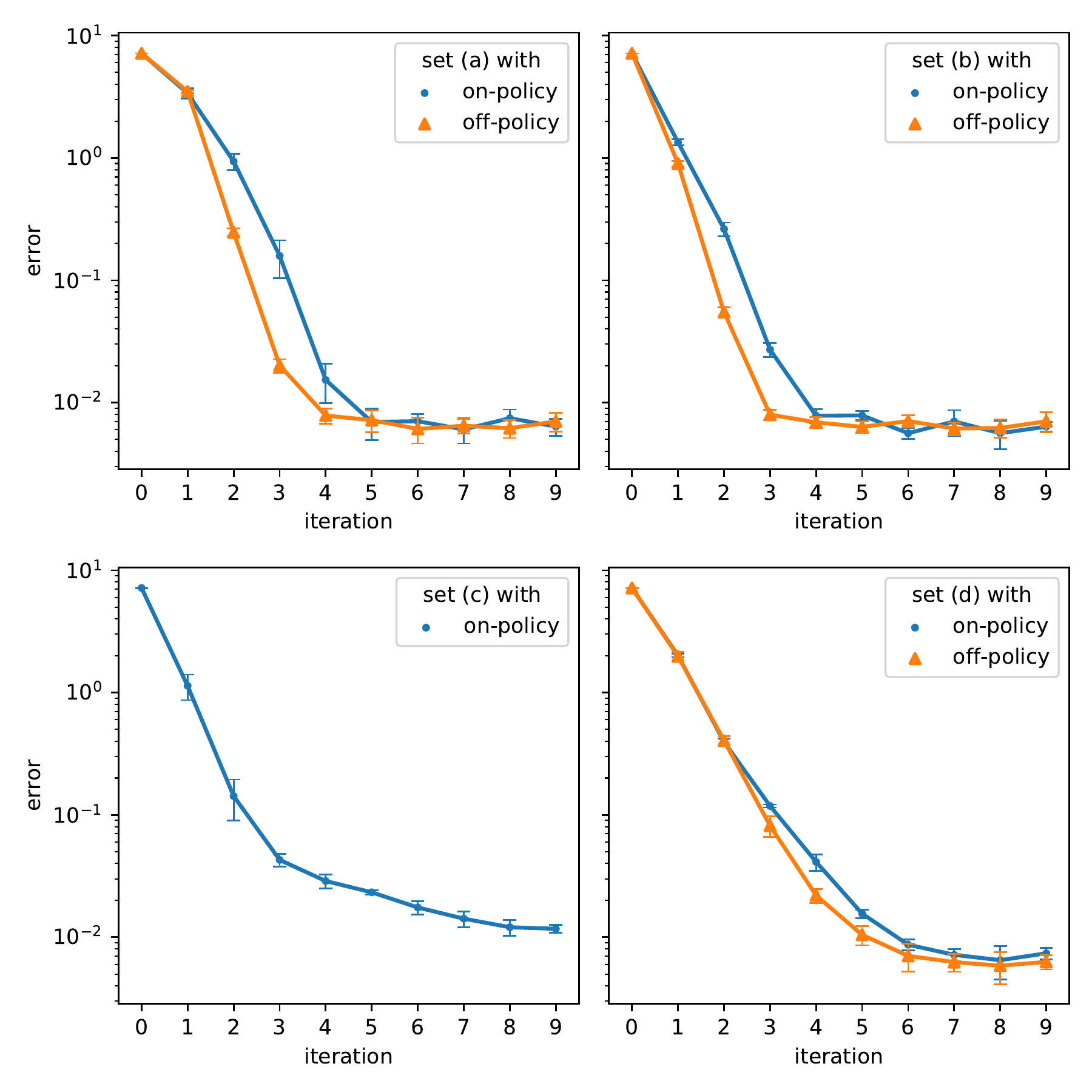}
  \caption{\label{fig:4}The absolute error between the optimal cost and $i$-th policy's cost for Example~\ref{example:4}. See Figure~\ref{fig:3} for the explanations of elements in figures.}
\end{figure}

\section{Conclusions and Outlook}\label{sec:conclusions}

In this paper, we build a probabilistic framework of Howard's policy iteration. Both an on-policy and an off-policy FBSDE-based GPI subroutines are proposed, and the convergence results associated with them are provided as well. In order to solve the proposed FBSDE-constrained optimization problem, we propose to minimize the BML criterion, which extends the Deep BSDE method in \cite{Han2018} and martingale approach \cite{jia_zhou_2022}. Future directions may include considering quadratic growth cost, relaxing the bounded control coupling term assumption and analyzing stability issues and so on.

\bibliographystyle{IEEEtran}
\bibliography{pibsde}

\end{document}